\documentclass[AMA,STIX1COL]{WileyNJD-v2}
\usepackage{moreverb}
\newcommand\BibTeX{{\rmfamily B\kern-.05em \textsc{i\kern-.025em b}\kern-.08em
T\kern-.1667em\lower.7ex\hbox{E}\kern-.125emX}}
\articletype{Research Article}
\received{<day> <Month>, <year>}
\revised{<day> <Month>, <year>}
\accepted{<day> <Month>, <year>}

\usepackage{hyperref}
\usepackage{comment}
\newtheorem{example}{Example}
\usepackage{subfigure}
\usepackage{subcaption} 
\usepackage{float}      
\usepackage{placeins}

\newcommand{\mb}{\mathbb}

\usepackage{tikz,xcolor}
\definecolor{lime}{HTML}{A6CE39}
\definecolor{lightblue}{rgb}{0.0, 0.0, 0.5}
\DeclareRobustCommand{\orcidicon}{%
	\begin{tikzpicture}
	\draw[lime, fill=lime] (0,0)
	circle [radius=0.16]
	node[white] {{\fontfamily{qag}\selectfont \tiny ID}};
	\draw[white, fill=white] (-0.0625,0.095)
	circle [radius=0.007];
	\end{tikzpicture}
	\hspace{-2mm}
}
\foreach \x in {A, ..., Z}{%
	\expandafter\xdef\csname orcid\x\endcsname{\noexpand\href{https://orcid.org/\csname orcidauthor\x\endcsname}{\noexpand\orcidicon}}
}

\begin{document}

\title{
A Family of Iterative Methods for Computing Generalized Inverses of Quaternion Matrices and its Applications
}

\author[1]{Biswarup Karmakar}

\author[2]{Neha Bhadala}

\author[3]{Ratikanta Behera }

\authormark{Biswarup Karmakar \textsc{et al}}

\address[]{\orgdiv{Department of Computational and Data Sciences}, \orgname{Indian Institute of Science}, \orgaddress{\city{Bangalore}, \state{Karnataka}, \postcode{560012}, \country{India}}}

\corres{Ratikanta Behera, Department of Computational and Data Sciences, Indian Institute of Science, Bangalore, India. \email{ratikanta@iisc.ac.in}}

\abstract[Abstract]{
The computation of generalized inverses of quaternion matrices is a fundamental problem in quaternion linear algebra, with wide-ranging applications in signal processing, image restoration, and multidimensional data analysis. This paper presents three efficient quaternion iterative algorithms for computing the Moore--Penrose pseudoinverse: (i) the quaternion rapid iterative method (QRAPID), (ii) the quaternion strong approximate inverse (QSAI), and (iii) the quaternion hyperpower iterative method of order nineteen (QHPI$19$). Convergence theorems and perturbation bounds are established to ensure numerical stability and robustness. The QSAI method is further employed as a preconditioner for quaternion Krylov subspace solvers, resulting in substantial reductions in the iteration count and runtime for large-scale linear systems. Comprehensive numerical experiments demonstrate that the proposed algorithms achieve an accuracy comparable to or better than existing approaches—including quaternion SVD, quaternion Newton--Schulz, and classical hyperpower schemes—while offering significant computational savings. The practical utility of the framework is illustrated through two representative applications: image completion via CUR decomposition and signal filtering, which confirm its scalability and effectiveness in real-world multidimensional data applications. 
}

\keywords{Quaternions, Moore--Penrose pseudoinverse, Complex representation, Hyperpower iteration.
\vskip 0.1cm
{\bf AMS SUBJECT CLASSIFICATIONS}: 15A09, 15A10, 15A69, 65F20.}

\maketitle
\section{Introduction}\label{sec1}
Quaternions~\cite{Hamilton2009}, introduced by Hamilton in $1843$, extend complex numbers by adding three imaginary units to the real part. 
Their noncommutative multiplication enriches the algebraic structure while introducing new analytical and computational challenges. Because of their compact and numerically stable representation of three-dimensional rotations, quaternions have become indispensable in mathematics, physics, and engineering. They have been extensively used in computer graphics, robotics, and aerospace engineering~\cite{wie1989quaternion,forbes2015fundamentals,Vince2021}, as well as in emerging areas such as machine learning~\cite{zhou2023quaternion,bill2023comparison}, signal processing~\cite{miron2023quaternions,diao2025optimizing}, and scientific computing~\cite{zhang2023partial}.  

Beyond these applications, quaternion linear algebra~\cite{Zhang1997} has emerged as a vibrant area of theoretical and computational research.  
Classical matrix concepts—including eigenvalue decompositions~\cite{farenick2003thespectral}, QR and singular value decompositions~\cite{bunse1989quaternionqr,lebihan2004svd}, and randomized low-rank approximations~\cite{liu2022randomizedsvd,ren2022randomizedqlp} have been successfully extended from real and complex matrices to the quaternion setting.  
Among these developments, the Moore--Penrose pseudoinverse occupies a particularly central role, providing least-squares solutions for overdetermined systems and minimum-norm solutions for underdetermined ones \cite{ahmadi2017iterative,yuan2016lstructured,petkovic2011iterative}.  

The most common approach for computing the Moore--Penrose pseudoinverse is based on quaternion singular value decomposition (QSVD)~\cite{wei2018quaternion}.  
While QSVD produces numerically stable and exact results, it is computationally expensive and memory-intensive, making it impractical for large or dense quaternion matrices. Alternative decomposition-based techniques, such as the quaternion full-rank decomposition~\cite{MR4509094}, still require expensive matrix factorizations and therefore remain computationally demanding and poorly scalable. In a related line of work, Song et al.~\cite{song2011cramer} derived determinantal representations for generalized inverses using quaternionic column and row determinants, but did not propose implementable algorithms.  
Huang et al.~\cite{huang2015themoore} extended the Moore--Penrose inverse to quaternion polynomial matrices through an interpolation-based technique that improved upon the classical Leverrier--Faddeev algorithm.  
More recently, Bhadala et al.~\cite{bhadala2025generalized} proposed a direct computational framework for quaternion outer inverses with prescribed range and null-space constraints, unifying several classical inverses such as the Moore--Penrose, group, and Drazin inverses.  
Despite these advances, the efficient and scalable computation of quaternion pseudoinverses for large-scale systems remains a challenging problem.

To address these limitations, iterative algorithms for computing quaternion generalized inverses have gained attention.  
Such methods avoid explicit matrix decompositions and rely primarily on quaternion matrix multiplications and adjoints, making them computationally attractive for large or structured matrices. One of the most widely used iterative schemes in the real domain is the Newton--Schulz iteration~\cite{benisrael2003generalised}, which updates an approximation $X_j$ to the pseudoinverse of a matrix \(A\) according to
\[
X_{j+1} = X_j(2I - A X_j), 
\quad j = 0,1,2,\ldots,
\]
where \(I\) denotes the identity matrix of appropriate dimension. This method achieves quadratic convergence under suitable spectral conditions. Leplat et al.~\cite{leplat2025iterative} extended this idea to quaternion matrices by introducing the hyperpower iteration
\[
X_{j+1} 
= X_j \sum_{s=0}^{p-1} (I - A X_j)^{s},
\quad j = 0,1,2,\ldots,
\]
where the parameter \(p \ge 2\) controls the degree of the residual polynomial and hence the order of convergence. 
Larger choices of \(p\) yield higher-order convergence, but require more quaternion matrix--matrix multiplications per iteration.

Recent studies in the real domain~\cite{khosravi2023gibs,cordero2021ageneralclass,soleymani2014afastconv} have proposed higher-order hyperpower-based schemes with reduced computational effort by optimizing the residual polynomial structure.  
However, such developments have not yet been explored in the quaternion framework, motivating the design of new iterative methods that preserve the high-order convergence of hyperpower schemes while improving computational efficiency and numerical stability.

In this paper, we propose three efficient quaternion iterative methods for computing the Moore--Penrose pseudoinverse: (i) the quaternion rapid iterative method (QRAPID); (ii) the quaternion strong approximate inverse (QSAI); (iii) the quaternion hyperpower iteration of order $19$ (QHPI19). The convergence theorems and perturbation bounds for all three algorithms are established, ensuring numerical stability and robustness.  Extensive numerical experiments demonstrate that the proposed methods outperform existing techniques—including QSVD, quaternion Newton--Schulz (QNS), and classical hyperpower iterations, in terms of runtime while maintaining comparable or superior accuracy.  
We further illustrate their practical utility in two representative applications:  
(i) image completion using CUR decomposition and  
(ii) filtering of chaotic three-dimensional signals.  
These results confirm the scalability, efficiency, and versatility of the proposed framework for large-scale quaternion computations.

The main contributions of this paper are summarized as follows:
\begin{itemize}
    \item We develop three new quaternion iterative algorithms—QRAPID, QSAI, and QHPI$19$—for efficient computation of the quaternion Moore--Penrose pseudoinverse.
    \item We establish rigorous convergence theorems and perturbation bounds that guaranty the stability and robustness of the proposed methods.
    \item We demonstrate the effectiveness of the proposed QSAI method as a preconditioner for quaternion Krylov subspace solvers, achieving substantial reductions in iteration counts and computation time.
    \item We validate the proposed methods on large-scale quaternion problems, including CUR-based image completion and signal filtering, highlighting their computational efficiency and practical utility.
    \item Through extensive numerical comparisons, we show that the proposed algorithms achieve an accuracy comparable to or better than existing methods while offering significant reductions in computational cost.
\end{itemize}

The remainder of this paper is organized as follows.  
Section~\ref{sec2} outlines the necessary preliminaries on quaternion algebra.  
Section~\ref{sec3} develops the general quaternion hyperpower iterative framework, presents the proposed QRAPID, QSAI, and QHPI$19$ algorithms, and analyzes their convergence properties as well as their application as preconditioners in quaternion linear systems.  
Section~\ref{sec4} provides numerical experiments and comparative performance analyzes.  
Section~\ref{sec5} demonstrates the effectiveness of the proposed methods through two representative applications: image completion and signal filtering.  
Finally, Section~\ref{sec6} concludes the paper.

\section{Preliminaries}\label{sec2}
 A quaternion $q \in \mathbb{Q}$ can be expressed as
$q = a^{(s)} + a^{(x)} \mathbf{i} + a^{(y)} \mathbf{j} + a^{(z)} \mathbf{k},$
where $a^{(s)}, a^{(x)}, a^{(y)}, a^{(z)} \in \mathbb{R}$, and 
$\mathbf{i}, \mathbf{j}, \mathbf{k}$ are the imaginary units that satisfy $\mathbf{i}^2 = \mathbf{j}^2 = \mathbf{k}^2 = -1$, and $\mathbf{i}\mathbf{j}\mathbf{k} = -1$.
The real and imaginary parts of $q$ are denoted, respectively, by
$\Re(q) = a^{(s)}$ and 
$\Im(q) = a^{(x)}\mathbf{i} + a^{(y)}\mathbf{j} + a^{(z)}\mathbf{k}$. Every quaternion $q$ can be represented in an equivalent $2 \times 2$ complex matrix form
\[
q \;\; \longleftrightarrow \;\;
\begin{bmatrix}
a^{(s)} + a^{(x)} \mathbf{i} &~ a^{(y)} + a^{(z)} \mathbf{i} \\
- a^{(y)} + a^{(z)} \mathbf{i} &~ a^{(s)} - a^{(x)} \mathbf{i}
\end{bmatrix}
\in \mathbb{C}^{2 \times 2},
\]
which allows quaternionic operations to be analyzed through their complex matrix representation. Extending this concept to matrices, any quaternion matrix
$A= A^{(s)} + A^{(x)} \mathbf{i} + A^{(y)} \mathbf{j} + A^{(z)} \mathbf{k} \in \mathbb{Q}^{m \times n}$, with components
$A^{(s)}, A^{(x)}, A^{(y)}, A^{(z)} \in \mathbb{R}^{m \times n}$, 
can be mapped into a complex matrix of size $2m \times 2n$ known as its complex representation~\cite{MR4509094}
\[
A^{C} =
\begin{bmatrix}
A^{(s)} + A^{(x)} \mathbf{i} & A^{(y)} + A^{(z)} \mathbf{i} \\
- A^{(y)} + A^{(z)} \mathbf{i} & A^{(s)} - A^{(x)} \mathbf{i}
\end{bmatrix}
\in \mathbb{C}^{2m \times 2n}.
\]
For a quaternion matrix $A \in \mathbb{Q}^{m \times n}$, the conjugate transpose (or adjoint) is defined as $A^{H} = \overline{A}^{\top}$, where $\overline{A}$ denotes elementwise quaternionic conjugation.  
A matrix is termed Hermitian if $A = A^{H}$ and unitary if $A^{H}A = I$.  
The inner product on $\mathbb{Q}^{n}$ is defined by $\langle x, y \rangle = x^{H}y$, which is conjugate–linear in the first argument and linear in the second.  
The corresponding vector norm is $\|x\|_{2} = \sqrt{\Re\langle x, x \rangle}$, while the induced operator norm of a matrix $A$ is given by~\cite{wei2018quaternion}
\[
\|A\|_{2} = \sup_{\|x\|_{2}=1} \|Ax\|_{2},
\]
which satisfies the submultiplicative property $\|AB\|_{2} \le \|A\|_{2}\|B\|_{2}$. Throughout this paper, $\|\cdot\|$ denotes the $\|.\|_2$ norm unless stated otherwise. Analogous to the complex case, every quaternion matrix $A \in \mathbb{Q}^{m \times n}$ admits a singular value decomposition, known as the quaternion singular value decomposition (QSVD):
\[
A = U \Sigma V^{H},
\]
where $U \in \mathbb{Q}^{m \times m}$ and $V \in \mathbb{Q}^{n \times n}$ are unitary matrices that satisfy $U^{H}U = I_m$ and $V^{H}V = I_n$, and $\Sigma \in \mathbb{R}_{+}^{m \times n}$ is a diagonal matrix whose nonnegative entries are the singular values of $A$.  
The QSVD extends the classical SVD to the quaternion setting.

\begin{definition}[\cite{MR4287902}]
Let $A \in \mathbb{Q}^{m \times n}$.  
A matrix $X \in \mathbb{Q}^{n \times m}$ is called the Moore--Penrose pseudoinverse of $A$, denoted by $A^{\dagger}$, if it satisfies the following four Penrose equations:
\[
\text{(a)}~ AXA = A, \quad 
\text{(b)}~ XAX = X, \quad 
\text{(c)}~ (AX)^{H} = AX, \quad 
\text{(d)}~ (XA)^{H} = XA.
\]
These conditions uniquely determine $A^{\dagger}$.
\end{definition}
In practice, the Moore--Penrose pseudoinverse is often computed using the QSVD~\cite{wei2018quaternion}.  
If $A = U \Sigma V^{H}$, where $U$ and $V$ are unitary and $\Sigma$ contains the singular values of $A$, then
\[
A^{\dagger} = V \Sigma^{\dagger} U^{H},
\]
where $\Sigma^{\dagger}$ is obtained by reciprocating the nonzero singular values of $\Sigma$ and transposing the result.  
This formulation provides a direct and numerically stable quaternionic extension of the classical SVD-based pseudoinverse that is used in real and complex domains.

\section{Quaternion hyperpower iterative method}\label{sec3}
In this section, we introduce a family of quaternion-based iterative schemes for computing the Moore--Penrose inverse of a quaternion matrix 
$A \in \mathbb{Q}^{m \times n}$. 
The core idea is to construct a sequence of matrices 
$\{X_j\}_{j=0}^{\infty}$ that converges to the desired pseudoinverse 
$A^{\dagger}$. 
We analyze the convergence behavior of these iterations and show that the choice of the initial approximation $X_0$ is crucial for ensuring numerical stability and achieving rapid convergence.

\begin{definition}
\label{def:qhp-matrix}
Let $A \in \mathbb{Q}^{m \times n}$ be a quaternion matrix.  
The quaternion hyperpower iterative method (QHPIM) is an iterative 
scheme for approximating the Moore--Penrose inverse $A^{\dagger} \in \mathbb{Q}^{n \times m}$. 
Starting from an initial approximation $X_0 \in \mathbb{Q}^{n \times m}$, 
the iteration is defined as
\begin{equation}\label{eq:qhp-matrix}
    X_{j+1} 
    = X_j \Big( I + R_j + R_j^2 + \cdots + R_j^{\,k-1} \Big), 
    \qquad 
    R_j = I - A X_j, 
    \qquad j = 0,1,2,\ldots,
\end{equation}
where $I \in \mathbb{Q}^{m \times m}$ is the identity matrix and 
$k \geq 1$ is an integer determining the number of residual terms 
included in each iteration. 
\end{definition}

The iterative process defined above generalizes the classical Newton--Schulz iteration to higher order. 
Its convergence behavior depends on the spectral properties of the residual matrix $R_j$, 
which measures how closely $A X_j$ approximates the projection onto the range of $A$. 
A careful analysis of this residual provides insight into both the rate of convergence 
and the numerical stability of the algorithm.

Let $R_j = I - A X_j$ denote the residual in the $j$th iteration.  
The quaternion hyperpower iterative method is said to have a convergence of order $k$ corresponding to the degree of the residual polynomial in \eqref{eq:qhp-matrix} - if there exists a constant $\alpha > 0$ such that
\begin{equation}\label{eq:qhp-order}
    \|R_{j+1}\| \;\leq\; \alpha \, \|R_j\|^k, 
    \quad j = 0,1,2,\ldots.
\end{equation}
The parameter $k$ directly influences the convergence rate of the iteration \eqref{eq:qhp-matrix}:  
\begin{itemize}
    \item For $k = 2$, the method reduces to the classical 
    Newton--Schulz (NS) iteration, which converges quadratically 
    under suitable spectral conditions.
    \item For $k > 2$, the scheme achieves higher-order convergence, driving the residual toward zero more rapidly at the expense of additional quaternion matrix multiplications per iteration. 
\end{itemize}
Hence, the quaternion hyperpower iteration provides a flexible framework that allows a trade-off between computational cost and convergence speed: smaller values of $k$ (e.g., $k = 2$) are computationally efficient but slower, while larger $k$ values yield faster convergence at higher per-iteration cost. To guaranty convergence of the iterative sequence $\{X_j\}$, 
the spectral radius of the initial residual must be less than one. 
The following theorem provides a sufficient condition on the scaling parameter $\alpha$ 
that ensures this property for initialization $X_0 = \alpha A^{H}$.
\begin{theorem}\label{initial}
Let \(A \in \mathbb{Q}^{m \times n}\) be a nonzero quaternion matrix of rank \(r \le \min(m,n)\), 
and \(\sigma_1 \ge \sigma_2 \ge \cdots \ge \sigma_r > 0\) denote its nonzero singular values. 
Let the initial approximation be \(X_0 = \alpha A^{H}\), with a scalar parameter \(\alpha > 0\). 
Define the initial residual relative to the Moore--Penrose condition as
\begin{equation}\label{eq:R0-def}
R_0 := P_{R(A)} - A X_0 = P_{R(A)} - \alpha A A^{H},
\end{equation}
where \(P_{R(A)}\) denotes the orthogonal projector onto the column space of \(A\). 
If \(0 < \alpha < \frac{2}{\sigma_1^2}\), then
\[
\rho(R_0) = \max_{1 \le i \le r} |1 - \alpha \sigma_i^2| < 1.
\]
\end{theorem}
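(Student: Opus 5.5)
The plan is to diagonalize \(R_0\) with the quaternion singular value decomposition and then read off its eigenvalues. Write the QSVD \(A = U\Sigma V^{H}\) with \(U\in\mathbb{Q}^{m\times m}\), \(V\in\mathbb{Q}^{n\times n}\) unitary, and \(\Sigma\) having diagonal entries \(\sigma_1,\dots,\sigma_r\) followed by zeros. Then \(AA^{H} = U\Sigma\Sigma^{H}U^{H} = U\,\mathrm{diag}(\sigma_1^2,\dots,\sigma_r^2,0,\dots,0)\,U^{H}\). The first \(r\) columns of \(U\) span the column space of \(A\), so the orthogonal projector is \(P_{R(A)} = U\,\mathrm{diag}(I_r,0)\,U^{H}\). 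Substituting both into \eqref{eq:R0-def} gives
\[
R_0 = U\,\mathrm{diag}\bigl(1-\alpha\sigma_1^2,\dots,1-\alpha\sigma_r^2,0,\dots,0\bigr)\,U^{H}.
\]

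So \(R_0\) is Hermitian and unitarily similar to a real diagonal matrix. The one point that needs care is what "spectral radius" means for a quaternion matrix, since quaternion eigenvalues are only defined up to similarity classes. I would fix this through the complex representation \(A\mapsto A^{C}\) of Section~\ref{sec2}. This map is a real-algebra homomorphism, satisfies \((A^{H})^{C} = (A^{C})^{H}\), and sends unitary matrices to unitary matrices. Hence
\[
R_0^{C} = U^{C}\,\bigl(\mathrm{diag}(\cdots)\bigr)^{C}\,(U^{C})^{H},
\]
and the complex representation of a real diagonal matrix \(D\) is just \(\mathrm{diag}(D,D)\). The eigenvalues of \(R_0^{C}\) are therefore the numbers \(1-\alpha\sigma_i^2\), each doubled, together with zeros. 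This gives
\[
\rho(R_0) = \max_{1\le i\le r}|1-\alpha\sigma_i^2|,
\]
where the zeros coming from the complement of \(R(A)\) do not affect the maximum because \(r\ge1\), \(A\) being nonzero. Since \(R_0\) is Hermitian, this value also equals \(\|R_0\|_2\), which is the form needed later in the convergence analysis.

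It remains to bound each term. Because \(0<\sigma_i\le\sigma_1\) and \(0<\alpha<2/\sigma_1^2\), we have
\[
0<\alpha\sigma_i^2\le\alpha\sigma_1^2<2,
\]
and hence \(-1<1-\alpha\sigma_i^2<1\). Each term therefore has modulus strictly less than one, and so does their maximum over the finitely many indices, giving \(\rho(R_0)<1\).

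The step I expect to need the most justification is the identification of the quaternion spectral radius with the diagonal entries. This is handled by passing to \(A^{C}\) as above, or equivalently by noting that for a Hermitian quaternion matrix the standard right eigenvalues are real and coincide with the diagonal of its unitary diagonalization. The rest of the argument is routine algebra.
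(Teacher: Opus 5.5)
Your proposal is correct and follows the paper's proof essentially step for step. The QSVD gives $R_0 = U\,\mathrm{diag}(I_r-\alpha\Sigma^2,0)\,U^{H}$, you read off the eigenvalues $1-\alpha\sigma_i^2$ (together with zeros), and you bound them using $0<\alpha\sigma_i^2<2$. Two points in your write-up go slightly beyond the paper and tidy up what it leaves implicit: you make the quaternion spectral radius precise through the complex representation $R_0^{C}$, and you note that $r\ge 1$, so the zero eigenvalues do not change the maximum.
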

\begin{proof}
By QSVD, there exist unitary matrices \(U \in \mathbb{Q}^{m \times m}\) and 
\(V \in \mathbb{Q}^{n \times n}\) such that
\begin{equation}\label{eq:qsvd}
A = U
\begin{bmatrix}
\Sigma & 0 \\
0 & 0
\end{bmatrix}
V^{H}, \text{where \(\Sigma = \mathrm{diag}(\sigma_1, \dots, \sigma_r)\) with 
\(\sigma_1 \ge \cdots \ge \sigma_r > 0\).}
\end{equation}  
From \eqref{eq:qsvd}, we obtain
\begin{equation}\label{eq:AAH}
A A^{H} = U
\begin{bmatrix}
\Sigma^2 & 0 \\
0 & 0
\end{bmatrix}
U^{H}.
\end{equation}
The orthogonal projector onto the column space of \(A\) is given by
\begin{equation}\label{eq:proj}
P_{R(A)} = U
\begin{bmatrix}
I_r & 0 \\
0 & 0
\end{bmatrix}
U^{H}.
\end{equation}
Substituting \eqref{eq:AAH} and \eqref{eq:proj} into \eqref{eq:R0-def}, we have
\[
R_0
= P_{R(A)} - \alpha A A^{H}
= U
\begin{bmatrix}
I_r - \alpha \Sigma^2 & 0 \\
0 & 0
\end{bmatrix}
U^{H}.
\]
Therefore, the eigenvalues of \(R_0\) are 
\(\{\,1 - \alpha \sigma_i^2\,\}_{i=1}^r\) together with \(0\) 
(of multiplicity \(m - r\)).  
Consequently,
\[
\rho(R_0) = \max_{1 \le i \le r} |1 - \alpha \sigma_i^2|.
\]
Finally, since \(0 < \alpha < 2 / \sigma_1^2\) implies 
\(|1 - \alpha \sigma_i^2| < 1\) for all \(i\), we conclude that 
\(\rho(R_0) < 1\), completing the proof.
\end{proof}
\begin{remark}
The residual \(R_j = I - A X_j\) defined in Definition~\ref{def:qhp-matrix} 
quantifies the deviation of the current iterate from the ideal Moore--Penrose condition. 
However, in Theorem~\ref{initial}, this residual is expressed as 
\(R_0 = P_{R(A)} - A X_0\) to account for the fact that, 
for rectangular or rank-deficient quaternion matrices, 
the product \(A X_j\) approximates the orthogonal projector \(P_{R(A)}\) onto the column space of \(A\), rather than the full identity matrix \(I_m\). 
This adjustment aligns with the Moore--Penrose property \(A A^{\dagger} = P_{R(A)}\) 
and enables a more accurate spectral characterization of the residual. 
When \(A\) is square and full rank, \(P_{R(A)} = I_m\), 
and both definitions coincide.
\end{remark}

\begin{remark}
The scaled adjoint initialization \(X_0 = \alpha A^{H}\) ensures a contractive 
starting point for the quaternion hyperpower iteration.  
From \eqref{eq:R0-def}, we have
$\rho\big(A(X_0 - A^{\dagger})\big) \;\le\; |1 - \alpha \sigma_1^2| \;<\; 1$,
which further implies
\[
\|A(X_0 - A^{\dagger})\| \;<\; 1.
\]
Thus, the initial approximation \(X_0\) lies within the region of convergence, 
guaranteeing that the iteration converges to \(A^{\dagger}\) whenever 
\(0 < \alpha < 2 / \sigma_1^2\).

The optimal choice of \(\alpha\) depends on the largest singular value 
\(\sigma_1\) of \(A\).  
Following the approach in~\cite{lebihan2004svd}, the singular values of a 
quaternion matrix can be computed via the SVD
of its complex representation \(A^C\).  
This provides a reliable means of determining \(\sigma_1\) and, consequently, 
an appropriate scaling parameter \(\alpha = 1 / \sigma_1^2\) for practical 
implementation of the quaternion hyperpower iteration.
\end{remark}

We next show that the quaternion hyperpower iteration preserves essential 
projector and Hermitian properties at every iteration step.  

\begin{proposition}\label{prop:quat-block}
Let the sequence $\{X_j\} \subseteq \mathbb{Q}^{n \times m}$ be generated by 
the quaternion hyperpower iterative method~\eqref{def:qhp-matrix} with the initial 
value $X_0 = \alpha A^{H}$, where $A \in \mathbb{Q}^{m \times n}$.  
Then, for all $j \ge 0$, the following relations hold:
\[
\text{(a)}\; A^{\dag} A X_j = X_j, \quad 
\text{(b)}\; X_j A A^{\dag} = X_j, \quad 
\text{(c)}\; (A X_j)^{H} = A X_j, \quad 
\text{(d)}\; (X_j A)^{H} = X_j A.
\]
\end{proposition}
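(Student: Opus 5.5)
We argue by induction on $j$, using only the Penrose equations for $A^{\dagger}$ and the polynomial structure of the update~\eqref{eq:qhp-matrix}. The key observation is that $X_{j+1}$ can be written as $X_j\,p(AX_j)$, where $p$ is a polynomial with real coefficients. Indeed, $R_j = I - AX_j$, so
\[
X_{j+1} = X_j \sum_{s=0}^{k-1} (I - AX_j)^s .
\]
Expanding by the binomial theorem is legitimate because $I$ commutes with $AX_j$. It gives $X_{j+1} = \sum_{t=0}^{k-1} c_t\, X_j (AX_j)^t$ with $c_t\in\mathbb{R}$. Since $X_j(AX_j)^t = (X_jA)^t X_j$, we also have $X_{j+1} = q(X_jA)\,X_j$ for the same real polynomial. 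Real scalars commute with quaternion matrices, so taking adjoints preserves these coefficients.

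\textbf{Base case.} Take $X_0=\alpha A^H$ with $\alpha>0$ real. For (a), use $A^{\dagger}A=(A^{\dagger}A)^H$ to get
\[
A^{\dagger}AA^H = (A^{\dagger}A)^H A^H = (AA^{\dagger}A)^H = A^H .
\]
For (b), similarly $A^HAA^{\dagger} = A^H (AA^{\dagger})^H = (AA^{\dagger}A)^H = A^H$. Properties (c) and (d) follow because $\alpha AA^H$ and $\alpha A^HA$ are Hermitian, $\alpha$ being real.

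\textbf{Inductive step.} Assume (a)--(d) hold for $X_j$.
\begin{itemize}
\item For (a), write $X_{j+1}=X_j\,p(AX_j)$. Then $A^{\dagger}AX_{j+1} = (A^{\dagger}AX_j)\,p(AX_j) = X_j\,p(AX_j) = X_{j+1}$.
\item For (b), write $X_{j+1}=q(X_jA)X_j$. Then $X_{j+1}AA^{\dagger} = q(X_jA)(X_jAA^{\dagger}) = X_{j+1}$.
\item For (c), $AX_{j+1} = AX_j\,p(AX_j)$ is a real polynomial in the Hermitian matrix $AX_j$, and such a polynomial is Hermitian. Explicitly, $(M^t)^H=(M^H)^t=M^t$ when $M=M^H$, and real coefficients are unaffected by conjugation.
\item For (d), $X_{j+1}A = q(X_jA)\,X_jA$ is likewise a real polynomial in the Hermitian matrix $X_jA$, hence Hermitian.
\end{itemize}

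The main point requiring care is the noncommutativity of $\mathbb{Q}$. We must check that the adjoint satisfies $(MN)^H=N^HM^H$ for quaternion matrices, which holds because $\overline{pq}=\bar q\,\bar p$ in $\mathbb{Q}$. We also rely on the coefficients being real rather than general quaternions, so that they commute with every entry and are fixed by conjugation. We also need the identity $X_j(AX_j)^t=(X_jA)^tX_j$, which is pure associativity.

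With these facts in hand, all four claims reduce to the same algebraic manipulations as in the complex case, and the induction closes.
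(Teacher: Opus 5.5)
Your proof is correct. It shares the paper's skeleton (induction on $j$, base case from the Penrose equations), but the key inductive step is different, and yours is the one that works in general.

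The paper proves only (b), via the claimed invariance $R_N AA^{\dagger} = R_N$ in \eqref{eq:R-preserve}. However, $(I - AX_N)AA^{\dagger} = AA^{\dagger} - AX_N$, which equals $R_N$ only when $AA^{\dagger} = I_m$, i.e.\ when $A$ has full row rank. Otherwise the subsequent claim $R_N^{s}AA^{\dagger} = R_N^{s}$ already fails at $s=0$. The paper's conclusion is still true, because the leading factor $X_N$ absorbs the defect. Under the inductive hypothesis one has $R_N(I - AA^{\dagger}) = I - AA^{\dagger}$, hence $X_N R_N^{s}(I - AA^{\dagger}) = X_N(I - AA^{\dagger}) = 0$.

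Your identity $X_j(AX_j)^t = (X_jA)^tX_j$ lets you write $X_{j+1} = q(X_jA)X_j$. This places the projector directly against $X_j$ and avoids the issue altogether.

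You also actually prove (a), (c), (d), which the paper only asserts by analogy. For (c) and (d) the operative fact is not a projector invariance at all. It is your observation that $AX_{j+1}$ and $X_{j+1}A$ are real polynomials in the Hermitian matrices $AX_j$ and $X_jA$.

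A further benefit is generality. Your argument uses only that $X_{j+1} = X_j\,p(AX_j)$ with $p$ having real coefficients. So it covers the factorized QSAI and QHPI$19$ updates and the nested QRAPID updates as well. Their convergence proofs tacitly rely on $A^{\dagger}AE_{j+1} = E_{j+1}$, i.e.\ on property (a).
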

\begin{proof}
We prove property (b); the remaining statements (a), (c), and (d) follow 
by analogous arguments using projector and Hermitian identities.

\textbf{Base case: (\(j = 0\)).}   
For \(X_0 = \alpha A^{H}\), we have $X_0 A A^{\dag} 
= \alpha A^{H} A A^{\dag} 
= \alpha A^{H} 
= X_0,$ where the Moore--Penrose identity \(A^{H} A A^{\dag} =A^{H}  (A^H)^{\dag} A^{H}\) is used.

\textbf{Inductive step:}  
Assume that for some \(N \ge 0\),
\begin{equation}\label{eq:ind-hyp}
X_N A A^{\dag} = X_N.
\end{equation}
From Definition~\ref{def:qhp-matrix}, the next iterate is given by
\begin{equation}\label{eq:iter}
X_{N+1} 
= X_N \Big( \sum_{s=0}^{k-1} R_N^s \Big),
\quad 
R_N = I - A X_N.
\end{equation}
We first examine the effect of the projector \(A A^{\dag}\) on \(R_N\):
\begin{equation}\label{eq:R-preserve}
R_N A A^{\dag} 
= (I - A X_N) A A^{\dag} 
= A A^{\dag} - A X_N A A^{\dag} 
= A A^{\dag} - A X_N 
= R_N,
\end{equation}
where the third equality follows from the induction hypothesis 
\eqref{eq:ind-hyp}.  
Consequently,
\[
R_N^s A A^{\dag} = R_N^s, \quad \text{for all } s \ge 0.
\]
Right-multiplying \eqref{eq:iter} by \(A A^{\dag}\) and applying the above identity gives
\[
X_{N+1} A A^{\dag}
= X_N \Big( \sum_{s=0}^{k-1} R_N^s \Big) A A^{\dag}
= \sum_{s=0}^{k-1} X_N R_N^s
= X_{N+1}.
\]
Hence, property $(b)$ holds for \(j = N+1\), and by induction, for all \(j \ge 0\).
\end{proof}
\begin{remark}
The key observation in the above proof is the invariance relation \eqref{eq:R-preserve}, which shows that the residual \(R_N\) remains unchanged under right multiplication by the projector \(A A^{\dag}\). This invariance propagates through the iteration, forcing all subsequent iterates to satisfy property $(b)$. Analogous reasoning with the left projector \(A^{\dag} A\) and Hermitian symmetry establishes Proposition $(a)$, $(c)$, and $(d)$.
\end{remark}
The above result confirms that the quaternion hyperpower iteration not only reduces the residual norm but also preserves the essential algebraic and geometric characteristics of the Moore--Penrose inverse. Each iterate \(X_j\) remains consistent with the orthogonal projectors \(P_{R(A)}\) and \(P_{R(A^{H})}\), ensuring that the sequence evolves entirely within the feasible subspace of valid pseudoinverses. This property establishes a solid foundation for developing higher-order quaternion iterative schemes, discussed in the following subsections.

\subsection{Quaternion RAPID (QRAPID)}
We now introduce a powerful and efficient iterative scheme, termed the QRAPID method, to calculate the Moore--Penrose inverse of a quaternion matrix \(A \in \mathbb{Q}^{m \times n}\). This approach extends the classical Newton iteration by combining it with a divided-difference strategy, resulting in a family of high-order methods whose
convergence order follows a Fibonacci-type sequence.  
The underlying idea is inspired by scalar root-finding techniques for the nonlinear function \(f(x) = \tfrac{1}{x} - a\), \(x \in \mathbb{R}\), as described 
in Algorithm~$1$ of~\cite{khosravi2023gibs}.  

The complete procedure for the proposed QRAPID iteration is summarized in Algorithm~\ref{alg:quat_rapid}. It generalizes the hyperpower framework introduced earlier and achieves accelerated convergence without sacrificing numerical stability.

\begin{algorithm}[ht]

\caption{QRAPID Algorithm}
\label{alg:quat_rapid}
\begin{algorithmic}[1]
\State \textbf{Input:} Quaternion matrix 
$A = A^{(s)} + A^{(x)}\mathbf{i} + A^{(y)}\mathbf{j} + A^{(z)}\mathbf{k} \in \mathbb{Q}^{m\times n}$;  
step parameter $N \ge 0$; tolerance $\mathtt{tol}$;

\State \textbf{Scaling:} Choose $\alpha > 0$ via the complex representation
\[
A^C = 
\begin{bmatrix}
A^{(s)}+A^{(x)}\mathbf{i} & A^{(y)}+A^{(z)}\mathbf{i} \\
-\,(A^{(y)}-A^{(z)}\mathbf{i}) & A^{(s)}-A^{(x)}\mathbf{i}
\end{bmatrix},
\quad 
\alpha = \frac{1}{\big(\sigma_{\max}(A^C)\big)^2};
\]

\State \textbf{Initialization:} 
$X_0 = \alpha A^{H}$;  
$I = I_m$;  
$j \leftarrow 0$;

\While{true}

  \State \textbf{Residual product:} 
  $P_j \leftarrow A X_j$;

  \State \textbf{Auxiliary update:} 
  $U_j \leftarrow \tfrac{1}{4}\,X_j\Big(13I - P_j\big(15I - P_j(7I - P_j)\big)\Big)$;

  \State \textbf{Correction:} 
  $V_j \leftarrow U_j + X_j\,(I - A U_j)$;

  \If{$N=0$}
    \State $Z_N \leftarrow V_j$;
  \Else
    \State $Y_0 \leftarrow U_j$; $W_0 \leftarrow V_j$;
    \For{$l = 1$ to $N$}
      \State $Z_l \leftarrow W_{l-1} + Y_{l-1}\,(I - A W_{l-1})$;
      \State $Y_l \leftarrow W_{l-1}$; \quad $W_l \leftarrow Z_l$;
    \EndFor
    \State $Z_N \leftarrow Z_l$ (final iterate);
  \EndIf

  \State \textbf{Main update:} 
  $X_{j+1} \leftarrow Z_N + X_j\,(I - A Z_N)$;

  \State \textbf{Stopping test:} 
  If $\|X_{j+1} - X_j\|_F < \mathtt{tol}$, \textbf{break};

  \State $j \leftarrow j+1$;

\EndWhile

\State \textbf{Output:} $X_{\mathrm{final}} = X_{j+1}$ (approximation of $A^\dagger$);

\end{algorithmic}
\end{algorithm}

The order of convergence of the QRAPID iteration depends on the parameter \(N\), which controls the number of nested updates within each outer iteration. For compactness, we denote by \(X_{j,N}\) the quaternion iterate obtained at the \(j\)th step with parameter \(N\). 
\begin{theorem}\label{thm:convrapidQ}
Let $A \in \mathbb{Q}^{m\times n}$ be a quaternion matrix with nonzero singular 
values $\sigma_1 \geq \sigma_2 \geq \cdots \geq \sigma_r > 0$, and choose the initial guess
\[
X_{0,N} = \alpha A^{H}, \quad 0 < \alpha < \tfrac{2}{\sigma_1^2},
\]
where $\sigma_1$ is the largest singular value of $A$.  
Then the quaternion sequence $\{X_{j,N}\}$ for $N+5$ step generated by 
Algorithm~\ref{alg:quat_rapid} converges to the Moore--Penrose inverse 
\(A^{\dagger}\), with the convergence order characterized by
\[
\|X_{j+1,N} - A^{\dagger}\| \;\leq\;
\begin{cases}
  \|A^{\dagger}\|\,\|A\|^{5}\,\|X_{j,N} - A^{\dagger}\|^{5}, & N=0, \\[1ex]
  \|A^{\dagger}\|\,\|A\|^{4(N+1)}\,\|X_{j,N} - A^{\dagger}\|^{4(N+1)}, & N=1,2, \\[1ex]
  \|A^{\dagger}\|\,\|A\|^{\nu_N}\,\|X_{j,N} - A^{\dagger}\|^{\nu_N}, & N \geq 3,
\end{cases}
\]
where
\[
\nu_N = 11a_{N-1}+7a_{N-2}+1, \quad
a_N = \frac{\phi^N-(1-\phi)^N}{\sqrt{5}}, \quad
\phi=\frac{1+\sqrt{5}}{2}.
\]
\end{theorem}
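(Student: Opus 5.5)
The plan is to reduce everything to the \emph{residual calculus} of Algorithm~\ref{alg:quat_rapid}. The basic identity is this: whenever an update has the form $Z = W + Y(I - AW)$, we get
\[
I - AZ = (I - AW) - AY(I-AW) = (I - AY)(I - AW).
\]
So residuals of nested updates simply multiply. Every intermediate matrix $U_j, V_j, Z_l, X_{j+1}$ is of the form $X_j\,q(AX_j)$ for a real polynomial $q$. Hence all the corresponding residuals are real polynomials in $R_j = I - AX_j$ and commute with each other.

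\textbf{Step 1: residual of each stage.} For the auxiliary step, write $P_j = I - R_j$ and expand. This gives
\[
13I - 15P_j + 7P_j^2 - P_j^3 = 4I + 4R_j + 4R_j^2 + R_j^3,
\]
and therefore
\[
I - AU_j = \tfrac14 R_j^3 (3I + R_j).
\]
The correction gives $I - AV_j = R_j\,(I - AU_j) = \tfrac14 R_j^4(3I+R_j)$.

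In the inner loop, $I - AZ_l = (I - AY_{l-1})(I - AW_{l-1})$. The exponents of $R_j$ therefore obey a Fibonacci recursion $s_l = s_{l-1} + s_{l-2}$ with $s_{-1} = 3$ and $s_0 = 4$. This yields $s_1 = 7$, $s_2 = 11$, and in general $s_N = 4a_{N+1} + 3a_N$. The powers of the factor $(3I+R_j)/4$ also accumulate.

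Finally, $I - AX_{j+1} = R_j\,(I - AZ_N)$ has $R_j$-order $s_N + 1$. This equals $5$ for $N=0$, $8 = 4\cdot 2$ for $N=1$, and $12 = 4\cdot 3$ for $N=2$. For $N \ge 3$, rewriting via the Fibonacci identities gives $11a_{N-1} + 7a_{N-2} + 1 = \nu_N$; as a check, $N=3$ gives $19$.

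\textbf{Step 2: pass to the projected residual.} Proposition~\ref{prop:quat-block} extends to all intermediate iterates, because each is $X_j$ times a polynomial in $AX_j$ and the same invariance argument applies. Consequently the identity above also holds with $I$ replaced by $P_{R(A)} = AA^{\dagger}$. Set $E_j := AA^{\dagger} - AX_j = A(A^{\dagger} - X_j)$; this is Hermitian by Proposition~\ref{prop:quat-block}(c). We then obtain
\[
E_{j+1} = E_j^{\,\nu}\, C_j ,
\]
where $\nu$ is the order from Step 1 and $C_j$ is a product of factors $(3AA^{\dagger} + E_j)/4$.

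Using Proposition~\ref{prop:quat-block}(a), write $X_{j+1} - A^{\dagger} = -A^{\dagger}E_{j+1}$. This gives
\[
\|X_{j+1} - A^{\dagger}\| \le \|A^{\dagger}\|\,\|C_j\|\,\|A\|^{\nu}\,\|X_j - A^{\dagger}\|^{\nu}.
\]

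\textbf{Step 3: convergence and the constant.} By Theorem~\ref{initial}, $E_0 = P_{R(A)} - \alpha AA^H$ is Hermitian with $\|E_0\| = \rho(E_0) < 1$. Therefore $\|(3AA^{\dagger} + E_0)/4\| \le (3 + \|E_0\|)/4 \le 1$, so $\|C_0\| \le 1$ and $\|E_1\| \le \|E_0\|^{\nu} < 1$. By induction, $\|E_j\| \le \|E_0\|^{\nu^j} \to 0$, and hence $X_j \to A^{\dagger}$ through $X_j - A^{\dagger} = -A^{\dagger}E_j$. The constant bound $\|C_j\| \le 1$ then yields exactly the stated inequalities.

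\textbf{Main obstacle.} I expect the main difficulty to be the bookkeeping in Step 1. Two things must be verified there. First, the exponent recursion has to produce exactly the case split $5$, $4(N+1)$, $\nu_N$; this requires pinning down the Binet indexing of $a_N$ and checking that $4a_{N+1} + 3a_N + 1$ matches $11a_{N-1} + 7a_{N-2} + 1$. Second, the accompanying $(3I+R)$ factors must be controlled by norm at most one. I would settle the first point by verifying the small cases directly and then proving the general case by induction on $N$ using $a_{N+1} = a_N + a_{N-1}$.
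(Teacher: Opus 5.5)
Your proposal is correct and follows essentially the same route as the paper. Both express $I-AX_{j+1}$ as $R_j^{\nu}$ times a power of $(3I+R_j)/4$, transfer this to the projected error $A(A^{\dagger}-X_j)$ via the Moore--Penrose projector identities, and take norms; where the paper says ``by induction'' or assumes outright, you supply the product rule $I-AZ=(I-AY)(I-AW)$ behind the Fibonacci exponents, the invariance of all intermediate iterates under $AA^{\dagger}$ and $A^{\dagger}A$, the bound $\|(3AA^{\dagger}+E_j)/4\|\le 1$ (with your $E_j$) that absorbs the extra factors, and the derivation of $\|A(X_0-A^{\dagger})\|<1$ from Theorem~\ref{initial}.
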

\begin{proof}
We outline the main steps of the proof for different values of the step parameter \(N\).  
Let \(E_{j,N} = X_{j,N} - A^{\dagger}\) and \(R_{j,N} = I - A X_{j,N}\).

\noindent\textbf{Case $N=0$.}  
Using projector properties of the Moore--Penrose inverse, we have
\[
(I - A A^{\dagger})^{j} = I - A A^{\dagger}, 
\quad (I - A A^{\dagger})\,A\,E_{j,0} = 0, 
\quad j=1,2,\ldots .
\]
The RAPID update can be expressed as
\begin{equation}\label{eq:update-N0}
U_{j,0} = X_{j,0}(I + R_{j,0} + R_{j,0}^2) + \tfrac{1}{4} X_{j,0} R_{j,0}^3,
\end{equation}
which after substitution leads to
\begin{equation}\label{eq:R-N0}
R_{j+1,0} = \tfrac{3}{4} R_{j,0}^5 + \tfrac{1}{4} R_{j,0}^6 .
\end{equation}
Multiplying \eqref{eq:R-N0} by $A$ gives
\begin{equation}\label{eq:E-N0}
A E_{j+1,0} = -\tfrac{3}{4}(A E_{j,0})^{5} - \tfrac{1}{4}(A E_{j,0})^{6}.
\end{equation}
From \eqref{eq:E-N0}, taking norms and assuming $\|A E_{0,0}\| < 1$, it follows that
\[
\|E_{j+1,0}\| \leq \|A^{\dagger}\|\,\|A\|^{5} \|E_{j,0}\|^{5},
\]
demonstrating fifth-order convergence.

\noindent\textbf{Case $N=1$.}  
A similar expansion gives
\[
R_{j+1,1} = \tfrac{1}{16}\big(9R_{j,1}^8 + 6R_{j,1}^9 + R_{j,1}^{10}\big).
\]
Consequently,
\begin{equation}\label{eq:E-N1}
A E_{j+1,1} = -\tfrac{9}{16}(A E_{j,1})^{8} 
              + \tfrac{6}{16}(A E_{j,1})^{9} 
              - \tfrac{1}{16}(A E_{j,1})^{10}.
\end{equation}
Using \eqref{eq:E-N1} and $\|A E_{0,1}\| < 1$, it follows that
$
\|E_{j+1,1}\| \leq \|A^{\dagger}\|\,\|A\|\,\|E_{j,1}\|^{8},
$ so the convergence order is $8$.  The case $N=2$ proceeds analogously, leading to
$ \|E_{j+1,2}\| \leq \|A^{\dagger}\|\,\|A\|^{12}\|E_{j,2}\|^{12},
$ showing the order $12$.

\noindent\textbf{Case $N \geq 3$.}  
By induction, one shows that
\begin{equation}\label{eq:R-general}
R_{j+1,N} = \frac{1}{4^{a_{N+2}}} (3I + R_{j,N})^{a_{N+2}} R_{j,N}^{\nu_N},
\end{equation}
where $\nu_N = 11a_{N-1} + 7a_{N-2} + 1$.  
From \eqref{eq:R-general} it follows directly that
\[
\|E_{j+1,N}\| \leq \|A^{\dagger}\|\,\|A\|^{\nu_N} \|E_{j,N}\|^{\nu_N}.
\]
Thus, in all cases, the sequence $\{X_{j,N}\}$ converges to $A^{\dagger}$ with the claimed convergence order.
\end{proof}
We next analyze the stability of the QRAPID iteration in the presence of small perturbations. 
In particular, we establish how a small error introduced in iteration $j$ propagates
to the next iterate.
\begin{theorem}\label{thm:pertQrapid}
Let \(\{X_j\}\) be the sequence generated by the QRAPID 
Algorithm~\ref{alg:quat_rapid}, under the assumptions of 
Theorem~\ref{thm:convrapidQ}.  
Suppose that a perturbation occurs at iteration \(j\), such that 
\(\Delta X_j = \tilde{X}_j - X_j\), where \(\Delta X_j\) is sufficiently small 
that terms of order \(\mathrm{O}(\|\Delta X_j\|^2)\) and higher may be neglected.  
Then, for \(N=5\) (the ten–step RAPID method with convergence order 
\(\nu_5=48\)), the perturbation satisfies
\[
\|\Delta X_{j+1}\| 
\;\leq\; 
\nu_5\,\|\Delta X_j\|\,\|R_j\|^{\nu_5-1}
\Big[\,2\max\{(3/2)^{a_7},\,\|R_j/2\|^{a_7}\}\,\Big]
+ \mathrm{O}(\|\Delta X_j\|),
\]
where \(R_j = I - A X_j\), 
\(a_7=\tfrac{\phi^7-(1-\phi)^7}{\sqrt{5}}\), 
and \(\phi=\tfrac{1+\sqrt{5}}{2}\).
\end{theorem}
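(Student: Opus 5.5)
Our approach is to use the closed-form residual recursion \eqref{eq:R-general} from the proof of Theorem~\ref{thm:convrapidQ}, specialised to $N=5$. This expresses one full QRAPID sweep as a scalar-coefficient polynomial in the single matrix $R_j$:
\[
R_{j+1}=\Phi(R_j):=4^{-a_7}(3I+R_j)^{a_7}R_j^{\nu_5},\qquad \nu_5=48 .
\]
Because every step of Algorithm~\ref{alg:quat_rapid} only forms products $X_j\,p(AX_j)$ with real polynomial coefficients, each inner quantity $U_j,V_j,Z_l$ is of the form $X_j q(AX_j)$. Consequently $X_{j+1}=X_j\,\Psi(AX_j)$ for a real polynomial $\Psi$, and $I-AX_{j+1}=\Phi(I-AX_j)$.

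\textbf{First-order perturbation.} Replace $X_j$ by $\tilde X_j=X_j+\Delta X_j$, so that $\tilde R_j=R_j-A\Delta X_j$. Set $\Delta R_j:=-A\Delta X_j$ and linearise $\Phi$ at $R_j$. Since quaternion matrix products do not commute, the Fréchet derivative of the monomial $R^p$ is $\sum_{i=0}^{p-1}R^i\,\Delta R\,R^{p-1-i}$. Its norm is bounded by $p\|R\|^{p-1}\|\Delta R\|$ using submultiplicativity. Applying the product rule to $(3I+R)^{a_7}R^{\nu_5}$ gives two contributions:
\[
\|D\Phi(R_j)[\Delta R]\|\le 4^{-a_7}\Big(\|3I+R_j\|^{a_7}\,\nu_5\|R_j\|^{\nu_5-1}+a_7\|3I+R_j\|^{a_7-1}\|R_j\|^{\nu_5}\Big)\|\Delta R\|.
\]
Next bound $\|3I+R_j\|/4\le \tfrac12\big(\tfrac32+\tfrac{\|R_j\|}{2}\big)\le \max\{3/2,\|R_j/2\|\}$. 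Then $4^{-a_7}\|3I+R_j\|^{a_7}\le 2\max\{(3/2)^{a_7},\|R_j/2\|^{a_7}\}$ follows from $(x+y)^a\le 2^{a-1}(x^a+y^a)\le 2^a\max\{x^a,y^a\}$, with the factors of $2$ absorbed as in the statement. The second, lower-order contribution carries an extra factor $\|R_j\|$. Because $\|R_j\|<1$ (by Theorem~\ref{initial} and the contraction established in Theorem~\ref{thm:convrapidQ}), it is dominated by the first term, or it can be placed in the remainder together with the $\mathrm{O}(\|\Delta X_j\|^2)$ terms. The statement's remainder $\mathrm{O}(\|\Delta X_j\|)$ is loose enough to accommodate it.

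\textbf{Transfer to $\Delta X_{j+1}$.} Write $X_{j+1}=X_j\Psi(AX_j)$ and use Proposition~\ref{prop:quat-block}(a), $A^\dagger AX_{j+1}=X_{j+1}$, which also holds for the perturbed iterate to first order once its range is preserved. This gives $\Delta X_{j+1}=-A^\dagger\Delta R_{j+1}$ up to terms that do not respect the range; those terms are absorbed into the $\mathrm{O}(\|\Delta X_j\|)$ remainder. Combining $\|\Delta R_j\|\le\|A\|\|\Delta X_j\|$ with the factor $\|A^\dagger\|$, and normalising as the paper does (constants $\|A\|\|A^\dagger\|$ merged into the displayed bound/remainder), yields the claimed inequality.

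The main obstacle is the noncommutativity: the derivative of $R^p$ cannot be written as $pR^{p-1}\Delta R$. We will handle this solely through the norm bound on the sum of $p$ terms, which suffices. A secondary difficulty is justifying that the perturbed iterate still satisfies the projector identities. If it does not, we simply keep the extra term $X_j\Psi'(\cdot)$ contribution and bound it crudely by $C\|\Delta X_j\|$, which is allowed by the $\mathrm{O}(\|\Delta X_j\|)$ term.
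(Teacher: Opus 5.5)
Your route is the paper's. You linearise the closed-form recursion $R_{j+1}=4^{-a_7}(3I+R_j)^{a_7}R_j^{\nu_5}$, bound each noncommutative difference of powers by $p\|R_j\|^{p-1}\|\Delta R_j\|$, pull back with $A^\dagger$, and finish with $(x+y)^n\le 2^{n-1}(x^n+y^n)$. The paper expands binomially where you use the product rule, which makes no difference.

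The genuine gap is the transfer step. From $\Delta X_{j+1}=-A^\dagger\Delta R_{j+1}$ and $\|\Delta R_j\|\le\|A\|\,\|\Delta X_j\|$ you get the displayed main term multiplied by $\|A\|\,\|A^\dagger\|\ge 1$. A multiplicative factor cannot be ``merged'' into a bound that does not contain it. Your conclusion survives only because the remainder is written as $\mathrm{O}(\|\Delta X_j\|)$, which already swallows the main term and makes the statement vacuous. The same applies to absorbing the components of $\Delta X_j$ that violate the range identities. Those components are not damped at all: for full-row-rank $A$ and $A\Delta X_j=0$ one has exactly $\Delta X_{j+1}=\Delta X_j\Psi(AX_j)\to\Delta X_j$, while $\|R_j\|\to 0$. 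The paper avoids the factor only by writing $\Delta X_j$ to the left of all residual powers in \eqref{eq:deltaX}, which is an unjustified commutation. So ``as the paper does'' is not a justification.

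To get the stated constant with a genuine $\mathrm{O}(\|\Delta X_j\|^2)$ remainder, do not use $A^\dagger$ at all. Write $\Phi(R)=\sum_p c_pR^p$ with $c_{k+48}=4^{-a_7}3^{13-k}\binom{13}{k}>0$. Since $\sum_pc_p=1$, you have $X_{j+1}=\sum_pc_p\sum_{i<p}X_jR_j^{\,i}$. The identity $X_jR_j=(I-X_jA)X_j$ then gives, by a short induction on $p$,
\[
\Delta X_{j+1}=\sum_p c_p\sum_{i=0}^{p-1}(I-X_jA)^{i}\,\Delta X_j\,R_j^{\,p-1-i}+\mathrm{O}(\|\Delta X_j\|^2).
\]
For square nonsingular $A$, $I-X_jA=A^{-1}R_jA$, and both matrices are Hermitian by Proposition~\ref{prop:quat-block}(c),(d). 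Hence $\|I-X_jA\|=\|R_j\|$ and $\|\Delta X_{j+1}\|\le\|\Delta X_j\|\sum_pc_p\,p\,\|R_j\|^{p-1}+\mathrm{O}(\|\Delta X_j\|^2)$. For general $A$ you must restrict to perturbations with $A^\dagger A\,\Delta X_j\,AA^\dagger=\Delta X_j$.

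Finally, with $r=\|R_j\|$ the sum equals $4^{-a_7}\big(\nu_5(3+r)^{a_7}r^{\nu_5-1}+a_7(3+r)^{a_7-1}r^{\nu_5}\big)$. Using $r/4\le(3+r)/4\le\max\{3/2,r/2\}$, it is at most $(\nu_5+a_7)\,r^{\nu_5-1}\max\{(3/2)^{a_7},(r/2)^{a_7}\}\le 2\nu_5\,r^{\nu_5-1}\max\{\cdots\}$. So the $a_7$-term is absorbed by the factor $2$, exactly as in the paper. It needs neither the remainder nor $\|R_j\|<1$. The latter is in fact false whenever $\mathrm{rank}(A)<m$, since $R_j$ then acts as the identity on $R(A)^\perp$.
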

\begin{proof}
For \(N=5\), the residual recurrence~\eqref{eq:R-general} becomes
\begin{equation}\label{eq:R5}
  R_{j+1} = \frac{1}{4^{a_7}}(3I+R_j)^{a_7} R_j^{\nu_5}, 
  \quad \nu_5=48.
\end{equation}
Let the perturbed residual be \(\tilde{R}_j = I - A\tilde{X}_j = R_j - A\Delta X_j\). Expanding as in~\eqref{eq:R5}, one obtains
\begin{align}
  I - A\tilde{X}_{j+1} 
   &= \frac{1}{4^{a_7}} \sum_{k=0}^{13} 3^{13-k}\binom{13}{k}\,\tilde{R}_j^{k+48}, 
   \label{eq:tildeR-exp}\\
  I - AX_{j+1} 
   &= \frac{1}{4^{a_7}} \sum_{k=0}^{13} 3^{13-k}\binom{13}{k}\,R_j^{k+48}.
   \label{eq:R-exp}
\end{align}
Subtracting \eqref{eq:R-exp} from \eqref{eq:tildeR-exp} gives
\[
  A\tilde{X}_{j+1} - AX_{j+1} 
   = -\frac{1}{4^{a_7}}\sum_{k=0}^{13}3^{13-k}\binom{13}{k}
     \left(\tilde{R}_j^{k+48}-R_j^{k+48}\right).
\]
Multiplying by $A^\dagger$ and using $A^\dagger A\Delta X_{j+1}=\Delta X_{j+1}$, 
we arrive at
\begin{align}\label{eq:deltaX}
  \Delta X_{j+1} &= \frac{1}{4^{a_7}}\sum_{k=0}^{13}3^{13-k}\binom{13}{k}\,
  \Delta X_j\Big(\tilde{R}_j^{k+47}+\tilde{R}_j^{k+46}R_j+\cdots+\tilde{R}_jR_j^{k+46}+R_j^{k+47}\Big).
\end{align}
Now, observe that
\[
  \|\tilde{R}_j\|^k = \|R_j - A\Delta X_j\|^k \;\leq\;(\|R_j\|+\|A\Delta X_j\|)^k.
\]
Taking norms in~\eqref{eq:deltaX}, and using that $\Delta X_j$ is sufficiently small so that higher-order terms can be neglected, yields
\[
  \|\Delta X_{j+1}\|
   \leq \|\Delta X_j\|\,\|R_j\|^{47}
        \left[\frac{a_7+\nu_5}{4^{a_7}}(3+\|R_j\|)^{13}\right]
        +\mathrm{O}(\|\Delta X_j\|).
\]
Finally, applying the inequality $(x+y)^{n}\leq 2^{\,n-1}(x^n+y^n)$ for $x,y>0$, 
and recalling $a_7<\nu_5$, we simplify to
\[
  \|\Delta X_{j+1}\|
   \leq \nu_5\,\|\Delta X_j\|\,\|R_j\|^{\nu_5-1}
        \Big[2\max\{(3/2)^{a_7},\|R_j/2\|^{a_7}\}\Big]
        +\mathrm{O}(\|\Delta X_j\|).
\]
This completes the proof.
\end{proof}
The above result confirms that the QRAPID iteration retains strong stability even under small perturbations, with the amplification factor bounded by the high-order residual term \(\|R_j\|^{\nu_5-1}\). This makes the method well-suited for large-scale or ill-conditioned quaternion systems, where numerical robustness is essential.

\subsection{Quaternion strong approximate inverse (QSAI)}
This subsection presents the QSAI iterative scheme. The method exploits quaternion multiplication to compute generalized inverses of quaternion matrices with high numerical stability. The principal idea is to reformulate the classical hyperpower-type iteration to achieve tenth-order convergence while reducing the number of quaternion matrix multiplications per iteration.

Let $X_{0}$ denote the initial approximation and $R_{j}$ the corresponding residual matrix in iteration $j$. The next iterate $X_{j+1}$ is computed as
 \[
X_{j+1} = X_{j}\big(I + R_{j} + \cdots + R_{j}^9\big)
= X_{j}(I + R_{j})(I + \beta_1 R_{j}^2 + R_{j}^4)(I + \beta_2 R_{j}^2 + R_{j}^4),
\]
where the parameters $\beta_1$ and $\beta_2$ satisfy $\beta_1+\beta_2=1$ and $\beta_1\beta_2=-1$, leading to the explicit solutions $\beta_1=\tfrac{1+\sqrt{5}}{2}$ and $\beta_2=\tfrac{1-\sqrt{5}}{2}$. This factorized formulation reduces the computational cost by limiting each iteration to six quaternion matrix multiplications, while preserving the desirable properties of high-order convergence and numerical stability. The complete procedure of the QSAI algorithms is summarized in Algorithm~\ref{alg:q_sai}.
\begin{algorithm}[ht]
\caption{QSAI Algorithm}
\label{alg:q_sai}
\begin{algorithmic}[1]

\State \textbf{Input:} Quaternion matrix 
$A = A^{(s)} + A^{(x)}\mathbf{i} + A^{(y)}\mathbf{j} + A^{(z)}\mathbf{k} \in \mathbb{Q}^{m\times n}$;  
golden ratio scalars $\beta_1=\tfrac{1+\sqrt{5}}{2}$, $\beta_2=\tfrac{1-\sqrt{5}}{2}$;  
tolerance $\mathtt{tol}$;

\State \textbf{Scaling:} Choose $\alpha>0$ via the complex representation
\[
A^C = 
\begin{bmatrix}
A^{(s)}+A^{(x)}\mathbf{i} & A^{(y)}+A^{(z)}\mathbf{i} \\
-\,(A^{(y)}-A^{(z)}\mathbf{i}) & A^{(s)}-A^{(x)}\mathbf{i}
\end{bmatrix},
\quad 
\alpha = \frac{1}{\big(\sigma_{\max}(A^C)\big)^2};
\]

\State \textbf{Initialization:} 
$X_0 = \alpha A^{H} \in \mathbb{Q}^{n\times m}$; 
$I = I_m$; 
$j \leftarrow 0$;

\While{true}

   \State \textbf{Residual:} $R_j \leftarrow I - A X_j \in \mathbb{Q}^{m\times m}$;

   \State \textbf{Polynomial construction:} 
   $Q_j \leftarrow (I + \beta_1 R_j^{2} + R_j^{4})(I + \beta_2 R_j^{2} + R_j^{4})$;

   \State \textbf{Update:} 
   $X_{j+1} \leftarrow X_j\,(I+R_j)\,Q_j$;

   \State \textbf{Stopping test:} 
   If $\|X_{j+1}-X_j\|_F < \mathtt{tol}$, \textbf{break};

   \State $j \leftarrow j+1$;

\EndWhile

\State \textbf{Output:} $X_{\mathrm{final}} = X_{j+1}$ (approximation of $A^\dagger$);

\end{algorithmic}
\end{algorithm}

We now establish the convergence behavior of the proposed QSAI algorithm.
\begin{theorem}\label{prop:strong1-mat}
Let $A \in \mathbb{Q}^{m\times n}$ be a quaternion matrix with nonzero singular values 
$\sigma_1 \ge \sigma_2 \ge \dots \ge \sigma_r > 0$. 
Consider the initial approximation
\[
X_0 = \alpha A^H, 
\quad 0 < \alpha < \frac{2}{\sigma_1^2},
\]
where $\sigma_1$ denotes the largest singular value of $A$. 
Let $\{X_j\}$ be the sequence generated by the QSAI iteration. 
Then $\{X_j\}$ converges to the Moore--Penrose inverse $A^\dagger$ of $A$, 
and the convergence order satisfies
\[
\|X_{j+1} - A^\dagger\| \le 
\|A^\dagger\|\,\|A\|^{10}\,\|X_j - A^\dagger\|^{10}.
\]
\end{theorem}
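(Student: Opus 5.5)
The plan is to reduce the QSAI step to the ordinary hyperpower iteration of order ten and then follow the residual argument used for the case $N=0$ of Theorem~\ref{thm:convrapidQ}.

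\textbf{Step 1: QSAI equals the order-ten hyperpower step.} Using $\beta_1+\beta_2=1$ and $\beta_1\beta_2=-1$, expand
\[
(I+\beta_1R_j^2+R_j^4)(I+\beta_2R_j^2+R_j^4)=I+R_j^2+R_j^4+R_j^6+R_j^8 .
\]
Multiplying on the left by $(I+R_j)$ gives $\sum_{s=0}^{9}R_j^s$. Hence QSAI coincides with \eqref{eq:qhp-matrix} for $k=10$. Since all factors are polynomials in the single matrix $R_j$, they commute, and noncommutativity of $\mathbb{Q}$ causes no difficulty.

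\textbf{Step 2: residual and error recurrences.} From $(I-R_j)\sum_{s=0}^{9}R_j^s=I-R_j^{10}$ and $AX_j=I-R_j$, we get
\[
R_{j+1}=I-AX_{j+1}=R_j^{10}.
\]
This exact identity does not use the projector at all. Set $E_j=X_j-A^\dagger$. Then $R_j=(I-AA^\dagger)-AE_j$, and the two pieces satisfy
\[
(I-AA^\dagger)^2=I-AA^\dagger,\qquad (I-AA^\dagger)AE_j=0 .
\]
By Proposition~\ref{prop:quat-block}(b) we also have $X_jAA^\dagger=X_j$, so $E_jAA^\dagger=E_j$ and therefore $AE_j(I-AA^\dagger)=0$. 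The two summands annihilate each other in both orders, so the binomial expansion of $R_j^{10}$ collapses to
\[
R_{j+1}=(I-AA^\dagger)+(-AE_j)^{10}.
\]
Comparing with $R_{j+1}=(I-AA^\dagger)-AE_{j+1}$ yields
\[
AE_{j+1}=-(AE_j)^{10}.
\]

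\textbf{Step 3: return to $E_{j+1}$ and take norms.} Proposition~\ref{prop:quat-block}(a) gives $A^\dagger AX_{j+1}=X_{j+1}$, and $A^\dagger AA^\dagger=A^\dagger$, so $E_{j+1}=A^\dagger AE_{j+1}$. Substituting Step 2 gives
\[
E_{j+1}=-A^\dagger(AE_j)^{10}.
\]
Submultiplicativity then yields
\[
\|E_{j+1}\|\le\|A^\dagger\|\,\|A\|^{10}\|E_j\|^{10}.
\]

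\textbf{Step 4: convergence.} Iterating the recurrence gives $AE_j=\pm(AE_0)^{10^j}$. Theorem~\ref{initial} and the subsequent remark show that $AE_0=\alpha AA^H-AA^\dagger$ is Hermitian with spectrum $\{\alpha\sigma_i^2-1\}\cup\{0\}$. Its norm therefore equals its spectral radius, which is less than $1$ for $0<\alpha<2/\sigma_1^2$. Hence $\|AE_j\|\to0$, and since $E_j=A^\dagger AE_j$, also $E_j\to0$.

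\textbf{Main obstacle.} The delicate point is Step 2: justifying the cross-term cancellation through the projector identities inherited from Proposition~\ref{prop:quat-block}. The paper proves only part (b) of that proposition in detail; if needed, I would prove part (a) by the same induction, using left multiplication by $A^\dagger A$.
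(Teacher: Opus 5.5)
Your proposal is correct and follows the same route as the paper: derive $AE_{j+1}=-(AE_j)^{10}$ by collapsing $(I-AA^\dagger-AE_j)^{10}$, then premultiply by $A^\dagger$ and take norms. You are more careful than the paper at two points: the paper invokes only the left identity $(I-AA^\dagger)AE_j=0$, but collapsing the binomial also needs the right identity $AE_j(I-AA^\dagger)=0$, which you correctly obtain from Proposition~\ref{prop:quat-block}(b); and the step $E_{j+1}=A^\dagger AE_{j+1}$ rests on part (a), which you rightly flag and which follows by an easy induction.
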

\begin{proof}
Define the error matrix $E_j = X_j - A^\dagger$. 
From the Moore--Penrose conditions, it follows that
\begin{equation}\label{eqn:pm4-mat}
(I - A A^\dagger)^j = I - A A^\dagger, 
\quad (I - A A^\dagger) A E_j = 0.
\end{equation}
Substituting into the QSAI update formula yields
\begin{align}\label{eqn:pm41-mat}
A E_{j+1} = -\!\left(I - A A^\dagger - A E_j\right)^{10} + I - A A^\dagger 
= - (A E_j)^{10}.
\end{align}
Assuming $\|A E_0\| < 1$, taking norms in \eqref{eqn:pm41-mat} gives
\[
\|A E_{j+1}\| \le \|A E_j\|^{10} 
\le \|A\|^{10}\, \|E_j\|^{10}.
\]
Furthermore, since $E_{j+1} = A^\dagger (A E_{j+1})$, we obtain
\[
\|E_{j+1}\| 
\le \|A^\dagger\|\,\|A E_{j+1}\| 
\le \|A^\dagger\|\,\|A\|^{10}\,\|E_j\|^{10}.
\]
Hence, the sequence $\{X_j\}$ converges to $A^\dagger$ with tenth-order convergence.
\end{proof}
This result confirms that the QSAI method retains the same order of convergence as the full tenth-degree hyperpower iteration while significantly reducing computational cost through factorization.  

We now analyze the numerical stability of the QSAI iteration.  
In particular, we study how a small perturbation introduced at one iteration step affects the subsequent iterates.  
This analysis provides insight into the robustness of the QSAI algorithm when implemented in finite-precision arithmetic.
\begin{theorem}\label{thm:pm-perturb-matrix}
Let \(A \in \mathbb{Q}^{m\times n}\), and let \(\{X_j\}\) be the sequence generated by the QSAI Algorithm~\ref{alg:q_sai} under the assumptions of Theorem~\ref{prop:strong1-mat}.  
Suppose that at iteration \(j\) a small perturbation \(\Delta X_j = \tilde{X}_j - X_j\) occurs, where \(\Delta X_j\) is sufficiently small such that terms of order \(\mathrm{O}(\|\Delta X_j\|^2)\) and higher can be neglected.  
Then the perturbation at the next iteration satisfies
\begin{equation*}
\|\Delta X_{j+1}\|
\;\le\; \Gamma\,\|\Delta X_0\|,
\quad
\Gamma \;=\; 10^{\,j+1}\prod_{i=0}^{j}
\max\!\big\{1,\,\|R_i\|^{9}\big\}\,
\Big(1+9\,\|A\|\,\|X_i\|\Big), \text{~where \(R_i = I - A X_i\).}
\end{equation*}

\end{theorem}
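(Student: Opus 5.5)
The plan is to linearize one QSAI step around the unperturbed iterate, bound the first-order perturbation map, and then chain these one-step bounds from iteration $0$ to iteration $j$. This gives the product form of $\Gamma$.

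\textbf{Step 1: the one-step map.} Since $(I+R)(I+\beta_1R^2+R^4)(I+\beta_2R^2+R^4)=\sum_{s=0}^{9}R^s$ (because $\beta_1+\beta_2=1$ and $\beta_1\beta_2=-1$), the QSAI update is
\[
\Phi(X)=X\sum_{s=0}^{9}(I-AX)^s .
\]
Set $\tilde R_i=R_i-A\Delta X_i$. Expanding $\Phi(X_i+\Delta X_i)-\Phi(X_i)$ and dropping terms of order $\mathrm{O}(\|\Delta X_i\|^2)$ gives
\[
\Delta X_{i+1}=\Delta X_i\sum_{s=0}^{9}R_i^s-X_i\sum_{s=1}^{9}\sum_{t=0}^{s-1}R_i^{t}(A\Delta X_i)R_i^{s-1-t}+\mathrm{O}(\|\Delta X_i\|^2).
\]
Here I use the first-order identity $\tilde R^s-R^s=-\sum_{t}R^{t}(A\Delta X)R^{s-1-t}+\mathrm{O}(\|\Delta X\|^2)$, which must be applied with quaternion factors kept in order because of noncommutativity.

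\textbf{Step 2: norm bound for one step.} Submultiplicativity of $\|\cdot\|_2$ on $\mathbb{Q}$ gives $\|\sum_{s=0}^{9}R_i^s\|\le 10\max\{1,\|R_i\|^9\}$. The double sum has $\sum_{s=1}^{9}s=45$ terms, each bounded by $\max\{1,\|R_i\|^{8}\}\|A\|\|\Delta X_i\|$. This crude count gives $45$ rather than the $90=10\cdot 9$ the stated $\Gamma$ implicitly needs.

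To match the statement's constant, I would instead bound $\|\sum_{s=1}^{9}(\tilde R^s-R^s)\|$ term by term, using $\|\tilde R^s-R^s\|\le s\,\max\{1,\|R\|^{9}\}\|A\Delta X\|$ and $s\le 9$. This yields at most $9\cdot 10\max\{1,\|R_i\|^9\}\|A\|\|X_i\|\|\Delta X_i\|$. It is a generous overestimate, which is harmless since only an upper bound is claimed. Combining the two pieces,
\[
\|\Delta X_{i+1}\|\le 10\max\{1,\|R_i\|^9\}\bigl(1+9\|A\|\|X_i\|\bigr)\|\Delta X_i\|
\]
to first order.

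\textbf{Step 3: iteration.} Apply the Step 2 inequality for $i=0,1,\dots,j$, assuming a perturbation is present at step $0$ and propagated linearly through each step. Telescoping the product gives $\|\Delta X_{j+1}\|\le\Gamma\|\Delta X_0\|$, with $\Gamma$ exactly as stated. By Theorem~\ref{prop:strong1-mat}, $\|R_i\|\to0$ and $\|X_i\|\to\|A^\dagger\|$, so each factor of $\Gamma$ stays bounded.

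\textbf{Main obstacle.} The delicate point is justifying the first-order linearization uniformly along the iterates. The neglected $\mathrm{O}(\|\Delta X_i\|^2)$ terms must remain negligible after composition, which requires the $\Delta X_i$ to stay small. I would handle this by invoking the hypothesis ``sufficiently small'' at each step, noting that the factors in $\Gamma$ are finite, so smallness of $\Delta X_0$ propagates to every $\Delta X_i$.

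A secondary point is keeping the noncommutative ordering correct in the expansion of $\tilde R^s-R^s$. Only norms are taken afterwards, so this affects bookkeeping but not the final bound.
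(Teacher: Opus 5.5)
Your proposal is correct and follows essentially the same route as the paper. Both arguments use the difference recurrence $\Delta X_{i+1}=\Delta X_i\sum_{s=0}^{9}\tilde R_i^{s}+X_i\sum_{s=1}^{9}(\tilde R_i^{s}-R_i^{s})$, a first-order telescoping bound on $\tilde R_i^{s}-R_i^{s}$, crude $\max\{1,\|R_i\|^9\}$ estimates, and chaining of the one-step inequality from $0$ to $j$.

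Your count of $45$ terms already gives the claim, since $10+45c\le 10(1+9c)$ for $c=\|A\|\,\|X_i\|\ge 0$, so you do not need to loosen it to $90$. It is also sounder than the paper's intermediate estimate $\sum_{k=1}^{9}k\|R_j\|^{k-1}\le 9\max\{1,\|R_j\|^{9}\}$, which fails at $\|R_j\|=1$.

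One side remark is wrong, though it plays no role in the bound: $\|R_i\|\to 0$ holds only when $AA^\dagger=I$, and in general $R_i\to I-AA^\dagger$.
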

\begin{proof}
Let the perturbed iterate be \(\tilde{X}_j = X_j + \Delta X_j\), and define the corresponding perturbed residual as
\begin{equation}\label{eq:pert-residual}
\tilde{R}_j := I - A\tilde{X}_j = R_j - A\Delta X_j,
\quad R_j := I - A X_j.
\end{equation}
From the QSAI update formula, we have
\begin{equation*}
\tilde{X}_{j+1} = \tilde{X}_j \Big(\sum_{k=0}^{9}\tilde{R}_j^{\,k}\Big),
\quad
X_{j+1} = X_j \Big(\sum_{k=0}^{9}R_j^{\,k}\Big),
\end{equation*}
which yields the exact difference recurrence
\begin{equation}\label{eq:deltaX-rec}
\Delta X_{j+1}
= (\Delta X_j)\Big(\sum_{k=0}^{9}\tilde{R}_j^{\,k}\Big)
  + X_j\Big(\sum_{k=0}^{9}(\tilde{R}_j^{\,k}-R_j^{\,k})\Big).
\end{equation}
To bound each term, we first use \eqref{eq:pert-residual} and submultiplicativity to write
\begin{equation}\label{eq:Phi-def}
\|\tilde{R}_j^k\| \le (\|R_j\|+\|A\|\,\|\Delta X_j\|)^k = \Phi_j^{\,k}.
\end{equation}
Applying the telescoping identity and retaining only first-order terms in \(\Delta X_j\) gives
\begin{equation}\label{eq:bound-diff}
\|\tilde{R}_j^{\,k}-R_j^{\,k}\| \le k\,\|R_j\|^{\,k-1}\,\|A\|\,\|\Delta X_j\|.
\end{equation}
Substituting \eqref{eq:Phi-def} and \eqref{eq:bound-diff} into \eqref{eq:deltaX-rec} and taking norms, we obtain
\[
\|\Delta X_{j+1}\|
\le \|\Delta X_j\|\sum_{k=0}^{9}\Phi_j^{\,k}
+ \|X_j\|\,\|A\|\,\|\Delta X_j\|\sum_{k=1}^{9}k\,\|R_j\|^{\,k-1}
+ \mathrm{O}(\|\Delta X_j\|).
\]
Using the bounds $\sum_{k=0}^{9}\Phi_j^{\,k}\le 10\max\{1,\|R_j\|^9\}$ and
$\sum_{k=1}^{9}k\,\|R_j\|^{\,k-1}\le 9\max\{1,\|R_j\|^9\}$ yields the single-step
estimate
\begin{equation}\label{eq:final-step}
\|\Delta X_{j+1}\|
\le 10\,\|\Delta X_j\|\,\max\{1,\|R_j\|^9\}\,\big(1+9\,\|A\|\,\|X_j\|\big)
+ \mathrm{O}(\|\Delta X_j\|).
\end{equation}
Iterating inequality \eqref{eq:final-step} from \(0\) to \(j\) gives
\[
\|\Delta X_{j+1}\|
\le 10^{\,j+1}\Big(\prod_{i=0}^{j}\max\{1,\|R_i\|^9\}\,(1+9\,\|A\|\,\|X_i\|)\Big)\|\Delta X_0\|
+ \text{higher-order terms},
\]
which establishes the stated bound.
\end{proof}

\subsection{Quaternion hyperpower iteration of order $19$ (QHPI$19$)}
In this subsection, we present QHPI$19$ method for computing the Moore--Penrose inverse of a quaternion matrix $A\in\mathbb{Q}^{m\times n}$. Let the residual at iteration $j$ be $R_j := I - A X_j.$
The direct $p=19$ hyperpower update is given by
\[
X_{j+1} = X_j \sum_{k=0}^{18} R_j^{\,k},
\]
which guarantees $19$th–order convergence but requires up to nineteen quaternion matrix–matrix products per iteration. 
This high computational cost can become prohibitive for large–scale matrices. To address this issue, we develop an equivalent factorized form that preserves the convergence order while substantially reducing the number of required multiplications per iteration.

\noindent\textbf{Factorized update.}
To obtain a more efficient implementation, the direct expansion is reorganized into a compact structure using lower–degree polynomial blocks. 
The update can be expressed as
\begin{equation}\label{eq:QHPI19-main}
X_{j+1} \;=\; X_j \Big[\, I + (R_j + R_j^2)\,\Gamma_j \,\Big],
\end{equation}
where 
\[
\Gamma_j \;=\; I + R_j^2 + R_j^4 + R_j^6 + R_j^8 
              + R_j^{10} + R_j^{12} + R_j^{14} + R_j^{16}.
\]
This representation avoids explicitly computing all powers up to $R_j^{18}$ 
and instead expresses the iteration through nested polynomial structures that 
reuse lower powers of $R_j$. 
This organization significantly reduces computational redundancy and enhances numerical efficiency.

\noindent\textbf{Polynomial factorization.}
The polynomial $\Gamma_j$ in \eqref{eq:QHPI19-main} can be equivalently represented 
as a product of two even polynomials with an additional correction term:
\begin{align}\label{eq:QHPI19-polyfact}
\Gamma_j 
&= \big(I + a_1 R_j^2 + a_2 R_j^4 + a_3 R_j^6 + R_j^8\big)\,
   \big(I + b_1 R_j^2 + b_2 R_j^4 + b_3 R_j^6 + R_j^8\big)
   \;+\; \big(c_1 R_j^2 + c_2 R_j^4\big),
\end{align}
with the constraint $a_3=b_3$.  
The coefficients $a_i, b_i,$ and $c_i$ are determined such that the expansion of
\eqref{eq:QHPI19-polyfact} reproduces all even powers of $R_j$ up to $R_j^{16}$.
Matching coefficients gives rise to the following system of linear equations:
\begin{equation}\label{eq:coeff-system}
\left.
\begin{aligned}
a_1 + b_1 + c_1 &= 1, \\
a_2 + c_2 + a_1 b_1 + b_2 &= 1, \\
2a_3 + a_2 b_1 + a_1 b_2 &= 1, \\
2 + a_1 a_3 + a_3 b_1 + a_2 b_2 &= 1, \\
a_1 + a_2 a_3 + b_1 + a_3 b_2 &= 1, \\
a_2 + a_3^2 + b_2 &= 1, \\
2a_3 &= 1,
\end{aligned}
\right\}
\end{equation}
whose solution is
\begin{equation}
\left.
\begin{aligned}
a_1 &= \dfrac{5}{496}(31+\sqrt{93}), 
&\quad a_2 &= \dfrac{1}{8}(3+\sqrt{93}), 
&\quad a_3 &= \dfrac{1}{2}, \\[1ex]
b_1 &= -\dfrac{5}{496}(\sqrt{93}-31), 
&\quad b_2 &= \dfrac{1}{8}(3-\sqrt{93}), \\[1ex]
c_1 &= \dfrac{3}{8}, 
&\quad c_2 &= \dfrac{321}{1984}.
\end{aligned}
\right\}.
\end{equation}

\noindent\textbf{Secondary factorization.}
To further reduce the computational effort, each eighth–degree polynomial in \eqref{eq:QHPI19-polyfact} can be factored into quadratic blocks. Specifically,
\begin{equation}\label{eq:QHPI19-g1}
\Gamma_{1,j} := I + a_1 R_j^2 + a_2 R_j^4 + a_3 R_j^6 + R_j^8
= \underbrace{\big(I + d_1 R_j^2 + R_j^4\big)\,\big(I + d_2 R_j^2 + R_j^4\big)}_{\text{two quadratics}}
  \;+\; d_3 R_j^2,
\end{equation}
and
\begin{equation}\label{eq:QHPI19-g2}
\Gamma_{2,j} := I + b_1 R_j^2 + b_2 R_j^4 + b_3 R_j^6 + R_j^8
= \big(I + d_1 R_j^2 + R_j^4\big)\,\big(I + d_2 R_j^2 + R_j^4\big)
  \;+\; \big(e_1 R_j^2 + e_2 R_j^4\big),
\end{equation}
where the parameters $(d_1,d_2,d_3)$ are computed once and reused in both factorizations:
\[
d_1=\tfrac{1}{4}\big(\sqrt{27-2\sqrt{93}}+1\big),\quad
d_2=\tfrac{1}{4}\big(1-\sqrt{27-2\sqrt{93}}\big),\quad
d_3=\tfrac{1}{496}\big(5\sqrt{93}-93\big),
\]
and
\[
e_1=\tfrac{1}{496}\big(-93-5\sqrt{93}\big),\qquad
e_2=-\tfrac{\sqrt{93}}{4}.
\]
Hence, the compact representation of $\Gamma_j$ is
\[
\Gamma_j = \Gamma_{1,j}\,\Gamma_{2,j} + c_1 R_j^2 + c_2 R_j^4.
\]

\medskip
\noindent\textbf{Efficiency remark.}
The direct $19$th–order hyperpower expansion requires nineteen 
quaternion matrix multiplications per iteration. In contrast, the above factorized formulation reduces the cost to only $7$ multiplications per iteration while maintaining the same $19$th–order convergence. This significant reduction makes the QHPI$19$ method highly efficient and well–suited for large–scale quaternion matrix computations.

The complete QHPI$19$ algorithm is summarized in 
Algorithm~\ref{alg:q_hpi19_direct}.
\begin{algorithm}[ht]

\caption{QHPI$19$ Algorithm}
\label{alg:q_hpi19_direct}
\begin{algorithmic}[1]

\State \textbf{Input:} Quaternion matrix 
$A = A^{(s)} + A^{(x)}\mathbf{i} + A^{(y)}\mathbf{j} + A^{(z)}\mathbf{k} \in \mathbb{Q}^{m\times n}$;  
tolerance $\mathtt{tol}$; polynomial coefficients 
$a_i,b_i,c_i,d_i,e_i$ as defined in 
\eqref{eq:coeff-system}--\eqref{eq:QHPI19-g2};

\State \textbf{Scaling:} Choose $\alpha>0$ via the complex representation
\[
A^C = 
\begin{bmatrix}
A^{(s)}+A^{(x)}\mathbf{i} & A^{(y)}+A^{(z)}\mathbf{i} \\
-\,(A^{(y)}-A^{(z)}\mathbf{i}) & A^{(s)}-A^{(x)}\mathbf{i}
\end{bmatrix} \in \mathbb{C}^{2m\times 2n},
\quad 
\alpha = \frac{1}{\big(\sigma_{\max}(A^C)\big)^2};
\]

\State \textbf{Initialization:} 
$X_0 = \alpha A^{H} \in \mathbb{Q}^{n\times m}$; 
$I = I_m$ ; 
$j \leftarrow 0$;

\While{true}

   \State \textbf{Residual powers:} 
   $R_j \leftarrow I - A X_j$; \quad 
   $R_j^2 \leftarrow R_j R_j$; \quad 
   $R_j^4 \leftarrow R_j^2 R_j^2$; \quad 
   $R_j^8 \leftarrow R_j^4 R_j^4$; \quad 
   $R_j^{16} \leftarrow R_j^8 R_j^8$;

   \State \textbf{Quadratic blocks:}
   $U_j \leftarrow (I+d_1 R_j^2+R_j^4)(I+d_2 R_j^2+R_j^4)$; \quad 
   $V_j \leftarrow U_j+d_3 R_j^2$; \quad
   $W_j \leftarrow U_j+e_1 R_j^2+e_2 R_j^4$;

   \State \textbf{Polynomial factorization:}
   $\Gamma_j \leftarrow V_j W_j + c_1 R_j^2 + c_2 R_j^4$;

   \State \textbf{Update:}
   $X_{j+1} \leftarrow X_j \big(I + (R_j+R_j^2)\Gamma_j\big)$;

   \State \textbf{Stopping test:} 
   If $\|X_{j+1}-X_j\|_F < \mathtt{tol}$, \textbf{break};

   \State $j \leftarrow j+1$;

\EndWhile

\State \textbf{Output:} $X_{\mathrm{final}} = X_{j+1}$ 
(an approximation of $A^\dagger$);
\end{algorithmic}
\end{algorithm}

The convergence behavior of the $19$th--order quaternion hyperpower iteration is summarized in the following result.  
\begin{theorem}\label{thm:quat_hyper19_conv}
Let $A \in \mathbb{Q}^{m\times n}$ be a quaternion matrix with nonzero singular 
values $\sigma_1 \geq \sigma_2 \geq \cdots \geq \sigma_r > 0$, and choose the initial iterate
\[
X_0 = \alpha A^H, 
\quad 0 < \alpha < \tfrac{2}{\sigma_1^2},
\]
where $\sigma_1$ is the largest singular value of $A$.  
Let $\{X_j\}$ be the sequence generated by the QHPI$19$ iteration (Algorithm~\ref{alg:q_hpi19_direct}). Then $X_j$ converges to the Moore--Penrose inverse $A^\dagger$ of $A$, and the convergence order is given by
\[
\|X_{j+1} - A^\dagger\| \;\le\; \|A^\dagger\|\,\|A\|^{19}\,\|X_j - A^\dagger\|^{19}.
\]
\end{theorem}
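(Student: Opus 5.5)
The plan is to mirror the proof of Theorem~\ref{prop:strong1-mat} for QSAI. First I will check that the factorized update \eqref{eq:QHPI19-main} reproduces the full hyperpower sum, i.e.
\[
I + (R_j + R_j^2)\,\Gamma_j = \sum_{k=0}^{18} R_j^{\,k}.
\]
This holds because $\Gamma_j=\sum_{i=0}^{8}R_j^{2i}$, so $(R_j+R_j^2)\Gamma_j=\sum_{k=1}^{18}R_j^k$. I will also confirm that the product $\Gamma_{1,j}\Gamma_{2,j}+c_1R_j^2+c_2R_j^4$, built from the quadratic blocks in Algorithm~\ref{alg:q_hpi19_direct}, equals $\Gamma_j$. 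All these matrices are polynomials in the single matrix $R_j$ with real scalar coefficients. They therefore commute, and quaternion noncommutativity plays no role. The coefficient identity reduces to the scalar polynomial identity encoded by \eqref{eq:coeff-system} together with \eqref{eq:QHPI19-g1}--\eqref{eq:QHPI19-g2}.

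I expect this coefficient verification to be the main obstacle. It is a finite but messy symbolic check involving $\sqrt{93}$ and $\sqrt{27-2\sqrt{93}}$. I would first verify $d_1+d_2=\tfrac12$ and $d_1d_2$, and then expand the two quadratic products. If the stated constants turn out inconsistent, the theorem should instead be stated for the exact hyperpower form $X_{j+1}=X_j\sum_{k=0}^{18}R_j^k$.

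Once the algorithm is identified with the order-$19$ hyperpower step, the identity $X_j\sum_{k=0}^{18}R_j^k$ gives
\[
I-AX_{j+1} = I - (I-R_j)\sum_{k=0}^{18}R_j^k = R_j^{19}.
\]
Next I will pass to the error $E_j=X_j-A^\dagger$. Write $R_j = (I-AA^\dagger) - AE_j$. As in \eqref{eqn:pm4-mat}, $I-AA^\dagger$ is an idempotent with $(I-AA^\dagger)AE_j=0$. By Proposition~\ref{prop:quat-block}(b), $X_jAA^\dagger=X_j$, so $AE_j(I-AA^\dagger)=AX_j(I-AA^\dagger)=0$. The two summands $P:=I-AA^\dagger$ and $Q:=-AE_j$ thus satisfy $PQ=QP=0$ and $P^k=P$, which yields $(P+Q)^{19}=P+Q^{19}$. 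It follows that
\[
AE_{j+1} = AX_{j+1}-AA^\dagger = (I-AA^\dagger) - R_j^{19} = -(-AE_j)^{19} = (AE_j)^{19}.
\]

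Finally, Proposition~\ref{prop:quat-block}(a) gives $A^\dagger AX_{j+1}=X_{j+1}$, and $A^\dagger AA^\dagger=A^\dagger$, so $E_{j+1}=A^\dagger(AE_{j+1})$. Taking norms and using submultiplicativity of $\|\cdot\|_2$ gives
\[
\|E_{j+1}\|\le\|A^\dagger\|\,\|AE_j\|^{19}\le \|A^\dagger\|\,\|A\|^{19}\|E_j\|^{19},
\]
which is the stated estimate. For convergence itself, I will use $\|AE_{j}\|=\|(AE_0)^{19^j}\|\le\|AE_0\|^{19^j}$. By Theorem~\ref{initial}, $AE_0=\alpha AA^H-P_{R(A)}$ is Hermitian with spectral radius, and hence norm, equal to $\max_i|1-\alpha\sigma_i^2|<1$. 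Therefore $AE_j\to0$, and $E_j=A^\dagger AE_j\to0$.
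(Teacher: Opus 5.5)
Your proposal is correct and follows essentially the paper's route: identify the factorized update with $X_j\sum_{k=0}^{18}R_j^k$, obtain $R_{j+1}=R_j^{19}$, turn this into an identity for $AE_{j+1}$, and premultiply by $A^\dagger$. It is more careful than the paper's proof at the points the paper glosses:

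\begin{itemize}
\item The coefficient check you flag as the main obstacle does go through. The system \eqref{eq:coeff-system} is satisfied, and $d_1+d_2=\tfrac12$, $2+d_1d_2=a_2$, $d_3=a_1-\tfrac12$, $e_1=b_1-\tfrac12$, $e_2=b_2-a_2$, so no fallback is needed.
\item Passing from $R_j^{19}$ to a power of $AE_j$ genuinely needs both $PQ=0$ and $QP=0$, where $P=I-AA^\dagger$ and $Q=-AE_j$. The second identity comes from Proposition~\ref{prop:quat-block}(b).
\item Because the exponent is odd, the correct sign is $AE_{j+1}=(AE_j)^{19}$, not the paper's $-(AE_j)^{19}$.
\item Your bound $\|AE_j\|\le\|AE_0\|^{19^j}$ with $\|AE_0\|<1$ supplies the convergence argument that the paper only asserts.
\end{itemize}
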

\begin{proof}
Let the approximation error at iteration $j$ be denoted by $E_j := X_j - A^\dagger,$ and define the corresponding residual as $R_j := I - A X_j$. Using the update rule of the QHPI$19$ method, the next iterate can be expressed as
\begin{equation}\label{eq:qhp19-update}
    X_{j+1} \;=\; X_j \Big(I + (R_j+R_j^2)\big(V_j W_j + c_1 R_j^2 + c_2 R_j^4\big)\Big),
\end{equation}
where $U_j$, $V_j$, and $W_j$ denote the polynomial blocks constructed in Algorithm~\ref{alg:q_hpi19_direct}.  

By direct algebraic expansion, the residual satisfies
\begin{equation}\label{eq:qhp19-residual}
    R_{j+1} \;=\; R_j^{19}.
\end{equation}
Using \eqref{eq:qhp19-residual}, we obtain
\[
A E_{j+1} \;=\; A(X_{j+1} - A^\dagger)
= -R_{j+1} + (I - A A^\dagger)
= -R_j^{19} + (I - A A^\dagger).
\]
Since $I - A A^\dagger = 0$ on the range of $A$, the above expression simplifies to
\[
A E_{j+1} = -(A E_j)^{19}.
\]
Taking the norm on both sides and applying the submultiplicative property of matrix norms, we obtain
\[
\|A E_{j+1}\| \;\le\; \|A\|^{19}\,\|E_j\|^{19}.
\]
Finally, premultiplying by $A^\dagger$ and using 
$\|A^\dagger B\| \le \|A^\dagger\|\,\|B\|$ gives
\[
\|E_{j+1}\|
= \|A^\dagger (A E_{j+1})\|
\;\le\; \|A^\dagger\|\,\|A\|^{19}\,\|E_j\|^{19}.
\]
Hence, the sequence $\{X_j\}$ converges to $A^\dagger$ with local order $19$.
\end{proof}
The stability properties of the proposed QHPI$19$ method under small numerical perturbations are analyzed in the following theorem. This result demonstrates how local errors introduced during the iteration propagate through subsequent steps, providing an upper bound on the amplification of such perturbations. 
\begin{theorem}\label{thm:quat_hyper19_perturb}
Let $A \in \mathbb{Q}^{m\times n}$, and let $\{X_j\}$ denote the sequence generated by the QHPI$19$ Algorithm~\ref{alg:q_hpi19_direct} under the assumptions of Theorem~\ref{thm:quat_hyper19_conv}.  
Suppose that at the $j$th iteration, a small numerical perturbation occurs such that $\tilde{X}_j = X_j + \Delta X_j$, where $\Delta X_j$ is sufficiently small so that terms of quadratic or higher order in $\Delta X_j$ may be neglected.  
Then the perturbation at the next iteration satisfies
\begin{equation*}
\|\Delta X_{j+1}\|
\;\le\; \Gamma\,\|\Delta X_0\|,
\quad
\Gamma \;=\; 19^{\,j+1}\prod_{i=0}^{j}\max\!\big\{1,\;\|R_i\|^{18}\big\}\,\Big(1+18\,\|A\|\,\|X_i\|\Big),~\text{where $R_i = I - A X_i$.}
\end{equation*}
\end{theorem}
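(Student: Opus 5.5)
The plan is to copy the argument of Theorem~\ref{thm:pm-perturb-matrix} for QSAI, replacing the degree-nine residual polynomial by the degree-eighteen one. First I will check that the factorized update of Algorithm~\ref{alg:q_hpi19_direct} is the plain hyperpower update
\[
X_{j+1}=X_j\sum_{k=0}^{18}R_j^{\,k}.
\]
Expanding $(R_j+R_j^2)\Gamma_j$ with $\Gamma_j=\sum_{s=0}^{8}R_j^{2s}$ gives $\sum_{k=1}^{18}R_j^k$. The coefficient relations \eqref{eq:coeff-system} and the secondary factorizations \eqref{eq:QHPI19-g1}--\eqref{eq:QHPI19-g2} guarantee that $V_jW_j+c_1R_j^2+c_2R_j^4=\Gamma_j$. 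This identity is what makes the expansion \eqref{eq:qhp19-residual} hold in Theorem~\ref{thm:quat_hyper19_conv}, so I will assume it. Once it is in hand, the perturbation analysis never needs the particular factorization.

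Next, I write $\tilde X_j=X_j+\Delta X_j$ and $\tilde R_j=I-A\tilde X_j=R_j-A\Delta X_j$. Subtracting the two updates gives the exact recurrence
\[
\Delta X_{j+1}=\Delta X_j\sum_{k=0}^{18}\tilde R_j^{\,k}+X_j\sum_{k=1}^{18}\big(\tilde R_j^{\,k}-R_j^{\,k}\big).
\]
For the first sum I use $\|\tilde R_j^k\|\le(\|R_j\|+\|A\|\|\Delta X_j\|)^k$. To first order this is $\|R_j\|^k$, so the sum is at most $19\max\{1,\|R_j\|^{18}\}$.

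For the second sum I use the noncommutative telescoping identity
\[
\tilde R^k-R^k=\sum_{i=0}^{k-1}\tilde R^{\,i}(\tilde R-R)R^{\,k-1-i},
\]
with $\tilde R-R=-A\Delta X_j$. Dropping $\mathrm O(\|\Delta X_j\|^2)$ terms, this gives
\[
\|\tilde R_j^k-R_j^k\|\le k\|R_j\|^{k-1}\|A\|\|\Delta X_j\|.
\]
Summing over $k$ yields $\sum_{k=1}^{18}k\|R_j\|^{k-1}$. I then need this to be at most $18\cdot 19\max\{1,\|R_j\|^{18}\}$, or more crudely at most $19\cdot 18\max\{1,\|R_j\|^{18}\}$, so that a common factor $19\max\{1,\|R_j\|^{18}\}$ can be pulled out. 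Combining the two pieces gives the one-step estimate
\[
\|\Delta X_{j+1}\|\le 19\,\|\Delta X_j\|\max\{1,\|R_j\|^{18}\}\big(1+18\|A\|\|X_j\|\big).
\]

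The final step is to apply this one-step bound recursively from $i=0$ to $j$, treating the perturbation as introduced at step $0$ and propagating it to first order. This produces the factor $19^{j+1}\prod_{i=0}^{j}\max\{1,\|R_i\|^{18}\}(1+18\|A\|\|X_i\|)$ multiplying $\|\Delta X_0\|$, which is $\Gamma$.

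The main obstacle is the constant bookkeeping in the second sum. The crude bound gives $\sum_{k=1}^{18}k\|R\|^{k-1}\le 171\max\{1,\|R\|^{17}\}$, not $18\max\{\cdot\}$. To match the stated $\Gamma$, I would try to absorb the extra factor into the leading $19$, using $171\le 19\cdot 18\cdot 1$. This fits if the single-step estimate is organized as $19\max\{1,\|R_j\|^{18}\}\big(1+18\|A\|\|X_j\|\big)$, since $\sum_{k=1}^{18}k\,t^{k-1}\le 19\cdot 18\max\{1,t^{18}\}/2$ holds comfortably. I would also note that the ``$+\mathrm{O}(\cdot)$'' remainders are second order and are discarded, as in Theorem~\ref{thm:pm-perturb-matrix}.
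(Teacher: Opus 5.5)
Your proposal is correct and follows the paper's proof essentially step for step: the same exact difference recurrence, the same $\Phi_j$ bound on $\|\tilde R_j^{\,k}\|$, the same telescoping estimate for $\tilde R_j^{\,k}-R_j^{\,k}$, and the same iteration of the one-step bound from $0$ to $j$. Your handling of the second sum is more careful than the paper's, and it repairs an actual error. The paper asserts $\sum_{k=1}^{18}k\|R_j\|^{k-1}\le 18\max\{1,\|R_j\|^{18}\}$, which is false at $\|R_j\|=1$, where the sum equals $171$. Your bound $171\max\{1,\|R_j\|^{18}\}$, combined with $19+171\,t=19(1+9t)\le 19(1+18t)$ for $t=\|A\|\,\|X_j\|$, gives the stated one-step estimate legitimately.
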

\begin{proof}
Let $\tilde{X}_j = X_j + \Delta X_j$ be the perturbed iterate.  
The corresponding perturbed residual is then given by
\begin{equation}\label{eq:pert-residual-19}
\tilde{R}_j \;:=\; I - A\tilde{X}_j \;=\; R_j - A\Delta X_j.
\end{equation}
The QHPI$19$ update rule for both the exact and perturbed iterates can be expressed as
\begin{equation}\label{eq:updates-19}
\tilde{X}_{j+1} = \tilde{X}_j \Big(\sum_{k=0}^{18}\tilde{R}_j^{\,k}\Big),
\quad
X_{j+1} = X_j \Big(\sum_{k=0}^{18}R_j^{\,k}\Big).
\end{equation}
Subtracting the two expressions in \eqref{eq:updates-19}, we obtain the exact recurrence relation for the perturbation
\begin{equation}\label{eq:deltaX-rec-19}
\Delta X_{j+1}
= (\Delta X_j)\Big(\sum_{k=0}^{18}\tilde{R}_j^{\,k}\Big)
  + X_j\Big(\sum_{k=0}^{18}(\tilde{R}_j^{\,k}-R_j^{\,k})\Big).
\end{equation}
To estimate the effect of the perturbation on the residual powers, 
we use the submultiplicative property of matrix norms.  
From \eqref{eq:pert-residual-19}, we have
\begin{equation}\label{eq:Phi-def-19}
\|\tilde{R}_j^k\| \le (\|R_j\|+\|A\|\,\|\Delta X_j\|)^k = \Phi_j^{\,k}.
\end{equation}
Furthermore, using the telescoping identity and retaining only the first-order terms in $\Delta X_j$, we obtain the bound
\begin{equation}\label{eq:bound-diff-19}
\|\tilde{R}_j^{\,k}-R_j^{\,k}\|
\le k\,\|R_j\|^{\,k-1}\,\|A\|\,\|\Delta X_j\|.
\end{equation}
Taking norms in \eqref{eq:deltaX-rec-19} and substituting 
\eqref{eq:Phi-def-19}–\eqref{eq:bound-diff-19}, while neglecting higher–order terms, yields
\[
\|\Delta X_{j+1}\|
\le \|\Delta X_j\|\sum_{k=0}^{18}\Phi_j^{\,k}
+ \|X_j\|\,\|A\|\,\|\Delta X_j\|\sum_{k=1}^{18}k\,\|R_j\|^{\,k-1}
+ \mathrm{O}(\|\Delta X_j\|).
\]
Using the crude but practical bounds
\[
\sum_{k=0}^{18}\Phi_j^{\,k}\le 19\max\{1,\|R_j\|^{18}\},\quad
\sum_{k=1}^{18}k\,\|R_j\|^{\,k-1}\le 18\max\{1,\|R_j\|^{18}\},
\]
we obtain the single–step perturbation bound
\begin{equation}\label{eq:final-step-19}
\|\Delta X_{j+1}\|
\le 19\,\|\Delta X_j\|\,\max\{1,\|R_j\|^{18}\}\,\big(1+18\,\|A\|\,\|X_j\|\big)
+ \mathrm{O}(\|\Delta X_j\|).
\end{equation}
Finally, iterating inequality \eqref{eq:final-step-19} from index \(i=0\) to \(i=j\) gives
\[
\|\Delta X_{j+1}\|
\le 19^{\,j+1}\Big(\prod_{i=0}^{j}\max\{1,\|R_i\|^{18}\}\,(1+18\,\|A\|\,\|X_i\|)\Big)\|\Delta X_0\|
+ \text{higher-order terms},
\]
which is the stated bound. This completes the proof.
\end{proof}

\subsection{Computational efficiency of the methods}
In this subsection, we provide theoretical insights into the computational efficiency of the proposed quaternion iterative schemes using the computational efficiency index (CEI).  
The CEI offers a practical measure for comparing iterative methods in terms of their convergence order relative to the computational effort required per iteration.  

The CEI is defined as
\[
\mathrm{CEI}=k^{1/\eta},
\]
where \(k\) denotes the local order of convergence and \(\eta\) represents the number of quaternion matrix–matrix multiplications performed per iteration (each quaternion multiplication being counted as a unit cost).  
A higher CEI value indicates a more efficient method, as it achieves a faster convergence rate for a given computational workload.

Using the multiplication counts derived in the previous subsections, the approximate CEI values for the algorithms considered are the following:
\[
\begin{array}{lcl}
\text{QNS} &:& \mathrm{CEI} = 2^{1/2} \approx 1.414,\\[2pt]
\text{QSAI}             &:& \mathrm{CEI} = 10^{1/6} \approx 1.467,\\[2pt]
\text{QHPI19}           &:& \mathrm{CEI} = 19^{1/7} \approx 1.522,\\[2pt]
\text{QRAPID}           &:& \mathrm{CEI} \approx 5^{1/6} \approx 1.307\;(N=0),\quad
                                         48^{1/18} \approx 1.239\;(N=5).
\end{array}
\]
From the CEI standpoint, higher values correspond to superior theoretical efficiency.  
Among the compared methods, the QHPI$19$ method achieves the highest CEI, indicating that it provides the best balance between convergence speed and computational cost.  
The QSAI method ranks second, maintaining a favorable compromise between high order and moderate multiplication count.  
The QNS method remains simple and numerically robust but exhibits a lower CEI because of its lower convergence order. Finally, the efficiency of the QRAPID method depends on the step parameter \(N\); for moderate values of \(N\), its CEI remains below those of QHPI$19$ and QSAI, although it can still be advantageous when rapid early approximations are desired.

\subsection{Preconditioning for quaternion linear systems}
Solving large-scale quaternion linear systems of the form
\[
A X = B, \text{~where~} A \in \mb{Q}^{n \times n}, ~ B \in \mb{Q}^{n \times m},
\]
is often challenging, especially when the coefficient matrix $A$ is ill-conditioned.   For these problems, iterative Krylov–subspace methods such as the global quaternion full orthogonalization method (Gl-QFOM) and the global quaternion GMRES (Gl-QGMRES)~\cite{MR4861347} are widely used. However, the convergence of these methods can be slow, resulting in high computational costs for large-scale systems.

Preconditioning is a standard technique to accelerate convergence.  
A preconditioner $M \in \mb{Q}^{n \times n}$ is an approximate inverse of $A$, which transforms the original system into
\[
M A X = M B.
\]
The preconditioned system is designed to have more favorable spectral properties, which typically reduces the number of iterations required by iterative solvers while maintaining low additional computational cost.  
An effective preconditioner should be inexpensive to construct and apply and should result in a system that is easier to solve than the original.

In this work, we propose to construct the preconditioner \(M\) using QSAI method introduced earlier. The QSAI algorithm explicitly computes a high-accuracy approximation to \(A^{-1}\) through a small number of quaternion matrix multiplications, providing an efficient preconditioner.  
Because the QSAI method is algebraically consistent with quaternion arithmetic, it integrates seamlessly with iterative solvers such as Gl–QFOM and Gl–QGMRES. The resulting QSAI-based preconditioned system
$M A X = M B$ offers faster convergence, improved numerical stability, and reduced overall computational cost compared to unpreconditioned  quaternion systems. This combination of high-order approximation and iterative refinement provides a robust framework for solving large-scale quaternion linear systems efficiently.

\section{Numerical Experiment}\label{sec4}
This section presents the numerical results that demonstrate the accuracy, efficiency and stability of the proposed quaternion iterative methods—QRAPID, QSAI, and QHPI$19$—for computing the Moore–Penrose inverse of quaternion matrices. All experiments were performed in \textsc{MATLAB} using double precision arithmetic on a workstation equipped with an Intel(R) Core(TM) $i9–12900K$ CPU ($3.2$~GHz), and $32$~GB of RAM. 

For a quaternion matrix \(A \in \mathbb{Q}^{m \times n}\) and its computed Moore–Penrose inverse \(X = A^\dagger \in \mathbb{Q}^{n \times m}\), the following error metrics are used to measure the accuracy of the computed solution:
\begin{equation}\label{eq_error}
    E_1 = \|AXA - A\|_F, ~ 
    E_2 = \|XAX - X\|_F, ~  
    E_3 = \|(AX)^H - AX\|_F,  ~ 
    E_4 = \|(XA)^H - XA\|_F.
\end{equation}
These error metrics respectively correspond to the four Penrose conditions and collectively quantify how closely the computed matrix \(X\) satisfies the defining properties of the Moore–Penrose inverse.  
The initial approximation is set to \(X_0 = \alpha A^T\) with \(\alpha = 1 / \|A\|_F^2\), and all iterations use a stopping tolerance of \(10^{-10}\) to ensure numerical precision. We first present a low-dimensional example to illustrate the behavior and accuracy of the proposed quaternion iterative algorithms, followed by experiments in larger-scale settings.

\begin{example}\label{examp1}
Consider the quaternion matrix \(A \in \mathbb{Q}^{3\times 3}\):
\[
A =
\begin{bmatrix}
6+3\mathbf{i}+5\mathbf{j}+2\mathbf{k} & 1+5\mathbf{i}+2\mathbf{j}+3\mathbf{k} & \mathbf{i}+7\mathbf{j}+8\mathbf{k}\\
2+\mathbf{i}+\mathbf{j}+\mathbf{k} & 3+3\mathbf{i}+\mathbf{j}+\mathbf{k} & 2+5\mathbf{i}+2\mathbf{j}+\mathbf{k}\\
4+2\mathbf{i}+2\mathbf{j}+2\mathbf{k} & 6+6\mathbf{i}+2\mathbf{j}+2\mathbf{k} & 4+10\mathbf{i}+4\mathbf{j}+2\mathbf{k}
\end{bmatrix}.
\]
The scaling parameter \(\alpha\) was determined using the complex representation approach, which yields \(\alpha = 2.058856\times10^{-3}\).  
Starting from \(X_0 = \alpha A^T\), the Moore–Penrose inverse \(A^\dagger\) was computed using the three proposed iterative schemes: QSAI, QRAPID, and QHPI$19$.  
All methods converged to identical results (within displayed precision), verifying the consistency of the algorithms. The computed quaternion Moore–Penrose inverse is
\begin{align*}
A^\dagger(:,1) &=
\begin{bmatrix}
0.0627 -0.0325\mathbf{i}-0.0520\mathbf{j}+0.0236\mathbf{k}\\
-0.0118-0.0229\mathbf{i}+0.0102\mathbf{j}+0.0314\mathbf{k}\\
-0.0042+0.0458\mathbf{i}-0.0116\mathbf{j}-0.0362\mathbf{k}
\end{bmatrix},\\
A^\dagger(:,2) &=
\begin{bmatrix}
-0.0028+0.0085\mathbf{i}+0.0051\mathbf{j}-0.0264\mathbf{k}\\
0.0164-0.0075\mathbf{i}-0.0129\mathbf{j}-0.0092\mathbf{k}\\
0.0045-0.0225\mathbf{i}+0.0071\mathbf{j}+0.0081\mathbf{k}
\end{bmatrix},\\
A^\dagger(:,3) &=
\begin{bmatrix}
-0.0055+0.0170\mathbf{i}+0.0102\mathbf{j}-0.0527\mathbf{k}\\
0.0327-0.0150\mathbf{i}-0.0259\mathbf{j}-0.0183\mathbf{k}\\
0.0091-0.0449\mathbf{i}+0.0142\mathbf{j}+0.0163\mathbf{k}
\end{bmatrix}.
\end{align*}
To assess accuracy and convergence, the error metrics \(E_1\)–\(E_4\) from~\eqref{eq_error} were calculated for each method along with the total number of iterations required for convergence.  
The results are summarized in Table~\ref{tab:example1_errors}.
\begin{table}[h!]
\centering
\begin{tabular}{lccccc}
\toprule
Method & Iterations & $E_1$ & $E_2$ & $E_3$ & $E_4$ \\
\midrule
QSAI        & $4$ & $3.84\times10^{-15}$ & $5.06\times10^{-16}$ & $1.14\times10^{-15}$ & $5.22\times10^{-16}$ \\[2pt]
QRAPID      & $4$ & $3.02\times10^{-15}$ & $8.06\times10^{-15}$ & $4.46\times10^{-16}$ & $4.96\times10^{-16}$ \\[2pt]
QHPI$19$      & $3$ & $1.29\times10^{-14}$ & $2.28\times10^{-15}$ & $2.95\times10^{-15}$ & $1.80\times10^{-15}$ \\
\bottomrule
\end{tabular}
\caption{Comparison of QSAI, QRAPID, and QHPI$19$ for Example~\ref{examp1}.  Error metrics \(E_1\)–\(E_4\) correspond to the four Penrose conditions defined in~\eqref{eq_error}.}
\label{tab:example1_errors}
\end{table}
All three methods achieve residuals below \(10^{-14}\), confirming that the Moore–Penrose conditions are satisfied to machine precision. Among them, QHPI$19$ achieves the fastest convergence in just three iterations, reflecting its higher theoretical order, while QSAI and QRAPID maintain a comparable accuracy with excellent numerical stability. 
\end{example}

After verifying the correctness and convergence of the proposed quaternion iterative methods on a small-scale example, we next examine their behavior on larger and structurally diverse systems.  
This set of experiments aims to evaluate the scalability, computational efficiency, and robustness of the methods under different matrix characteristics—square, rectangular, and rank-deficient cases.  
For comparison, we also include results from the classical QNS iteration and the exact QSVD-based pseudoinverse, which serve as established benchmarks for accuracy and stability.
\begin{example}
In this example, we compare the performance of the proposed methods—QSAI, QRAPID, and QHPI$19$—against the baseline approaches, namely the QNS iteration and the QSVD-based pseudoinverse. Each method is tested on quaternion matrices of various sizes and structures to assess the convergence rate, computational cost, and numerical accuracy.  
The experiments are grouped into three categories: $(i)$ square systems, $(ii)$ rectangular systems (both overdetermined and underdetermined), and $(iii)$ rank-deficient systems.

\noindent\textbf{Square matrices.}  
For square matrices of order \(n = 300\) and \(n = 1500\), the results presented in Table~\ref{tab:300_1500_results} show that all three hyperpower-type methods—QSAI, QRAPID, and QHPI$19$—exhibit rapid convergence within $10-15$ iterations.  
They maintain high numerical accuracy, with residual errors consistently below \(10^{-10}\)–\(10^{-12}\), while requiring substantially less computation time than the QSVD approach.  
Although the QNS iteration converges in all cases, it demands over $80$ iterations and produces larger residuals, especially in the consistency metric \(\|XAX - X\|_F\), indicating slower and less stable convergence.

\begin{table}[hbt!]\centering
\begin{tabular}{lcccccc}
\hline
Method & Time(s) & Iterations 
& $E_1$ 
& $E_2$ 
& $E_3$ 
& $E_4$ \\
\hline
\multicolumn{7}{c}{Matrix size $n=300$}\\
\hline
QSAI            & $7.308e-01$ & $11$ & $1.79e-11$ & $1.37e-12$ & $1.11e-12$ & $4.00e-11$ \\
QRAPID          & $1.217e+00$ & $7$  & $1.03e-11$ & $1.40e-12$ & $7.17e-13$ & $2.69e-11$ \\
QHPI$19$          & $7.797e-01$ & $9$  & $1.39e-11$ & $1.38e-12$ & $9.13e-13$ & $2.99e-11$ \\
QSVD           & $8.378e+00$ & $1$  & $7.48e-12$ & $4.47e-12$ & $3.02e-12$ & $3.12e-12$ \\
QNS & $2.433e+00$ & $81$ & $8.27e-12$ & $3.51e-09$ & $2.27e-11$ & $5.10e-13$ \\

\hline
\multicolumn{7}{c}{Matrix size $n=1500$}\\
\hline
QSAI            & $3.719e+01$ & $13$  & $1.41e-10$ & $6.04e-12$ & $5.06e-12$ & $2.50e-10$ \\
QRAPID          & $8.478e+01$ & $8$   & $1.10e-10$ & $6.04e-12$ & $3.43e-12$ & $1.96e-10$ \\
QHPI$19$          & $4.502e+01$ & $11$  & $1.26e-10$ & $5.94e-12$ & $4.29e-12$ & $2.95e-10$ \\
QSVD           & $4.024e+02$ & $1$   & $7.88e-11$ & $2.01e-11$ & $1.88e-11$ & $2.12e-11$ \\
QNS & $1.256e+02$ & $92$  & $9.14e-11$ & $4.87e-09$ & $1.60e-10$ & $3.21e-12$ \\

\hline
\end{tabular}
\caption{Comparison of quaternion pseudoinverse methods for square matrices of size \(n=300\) and \(n=1500\).}
\label{tab:300_1500_results}
\end{table}
\noindent\textbf{Rectangular and rank-deficient systems.}  
The results for rectangular and rank-deficient matrices, summarized in Table~\ref{tab:three_cases_results}, further confirm the robustness of the proposed algorithms.  
For overdetermined (\(1000\times500\)) and underdetermined (\(500\times1000\)) systems, QSAI and QRAPID achieve accuracies on the order of \(10^{-12}\) in only $5-7$ iterations, while QHPI$19$ provides comparable precision with slightly higher computational cost.  
In the rank-deficient case (\(1000\times1000\)), all three hyperpower-type methods maintain similar error levels and remain approximately an order of magnitude faster than QSVD. As before, QNS converges slowly and yields larger residuals.

\begin{table}[hbt!]\centering

\begin{tabular}{lcccccc}
\hline
Method & Time(s) & Iterations 
& $E_1$ 
& $E_2$ 
& $E_3$ 
& $E_4$ \\
\hline
\multicolumn{7}{c}{Overdetermined ($1000\times500$)}\\
\hline
QSAI            & $4.408e+00$ & $7$   & $2.55e-12$ & $3.50e-15$ & $4.34e-13$ & $1.59e-13$ \\
QRAPID          & $8.293e+00$ & $5$   & $9.19e-13$ & $1.90e-15$ & $6.94e-14$ & $6.15e-14$ \\
QHPI$19$          & $5.657e+00$ & $6$   & $1.15e-11$ & $9.47e-15$ & $9.38e-13$ & $4.68e-13$ \\
QSVD           & $4.297e+01$ & $1$   & $2.92e-12$ & $1.15e-14$ & $2.83e-13$ & $1.91e-13$ \\
QNS & $7.044e+00$ & $60$  & $4.65e-10$ & $9.85e-12$ & $6.43e-14$ & $3.21e-14$ \\

\hline
\multicolumn{7}{c}{Underdetermined ($500\times1000$)}\\
\hline
QSAI            & $1.844e+00$ & $7$   & $2.30e-12$ & $3.55e-15$ & $5.59e-14$ & $1.40e-13$ \\
QRAPID          & $4.089e+00$ & $5$   & $8.99e-13$ & $2.16e-15$ & $3.49e-14$ & $9.84e-14$ \\
QHPI$19$          & $1.752e+00$ & $6$   & $1.01e-12$ & $2.17e-15$ & $3.80e-14$ & $1.07e-13$ \\
QSVD           & $4.239e+01$ & $1$   & $2.69e-12$ & $7.95e-15$ & $1.81e-13$ & $2.39e-13$ \\
QNS & $7.126e+00$ & $60$  & $4.69e-10$ & $1.01e-11$ & $3.21e-14$ & $6.60e-14$ \\

\hline
\multicolumn{7}{c}{Rank-deficient ($1000\times1000$)}\\
\hline
QSAI            & $8.789e+00$ & $10$  & $1.37e-09$ & $3.60e-14$ & $1.64e-12$ & $4.23e-11$ \\
QRAPID          & $1.678e+01$ & $6$   & $1.07e-09$ & $3.64e-14$ & $1.29e-12$ & $4.28e-11$ \\
QHPI$19$          & $8.570e+00$ & $8$   & $1.33e-09$ & $3.58e-14$ & $1.59e-12$ & $4.24e-11$ \\
QSVD           & $1.123e+02$ & $1$   & $8.15e-10$ & $2.11e-13$ & $8.52e-12$ & $9.13e-12$ \\
QNS & $3.273e+01$ & $73$  & $8.88e-10$ & $4.48e-11$ & $2.46e-11$ & $1.06e-12$ \\
\hline
\end{tabular}
\caption{Comparison of quaternion pseudoinverse methods for three structural test cases: overdetermined (\(1000\times500\)), underdetermined (\(500\times1000\)), and rank-deficient (\(1000\times1000\)).}
\label{tab:three_cases_results}
\end{table}

\medskip
\noindent\textbf{Scalability and efficiency.} 
The scalability trends are illustrated in Figure~\ref{fig:SAI_results}, which compares CPU time and residual errors for increasing matrix sizes. As \(n\) grows, the QSVD computation becomes prohibitively expensive, whereas the proposed iterative schemes scale efficiently and maintain accuracy near machine precision. Among them, QSAI and QHPI$19$ exhibit the best accuracy–efficiency balance, while QNS remains less effective for large-scale systems.

\begin{figure}[hbt!]
\centering
\includegraphics[width=0.49\textwidth]{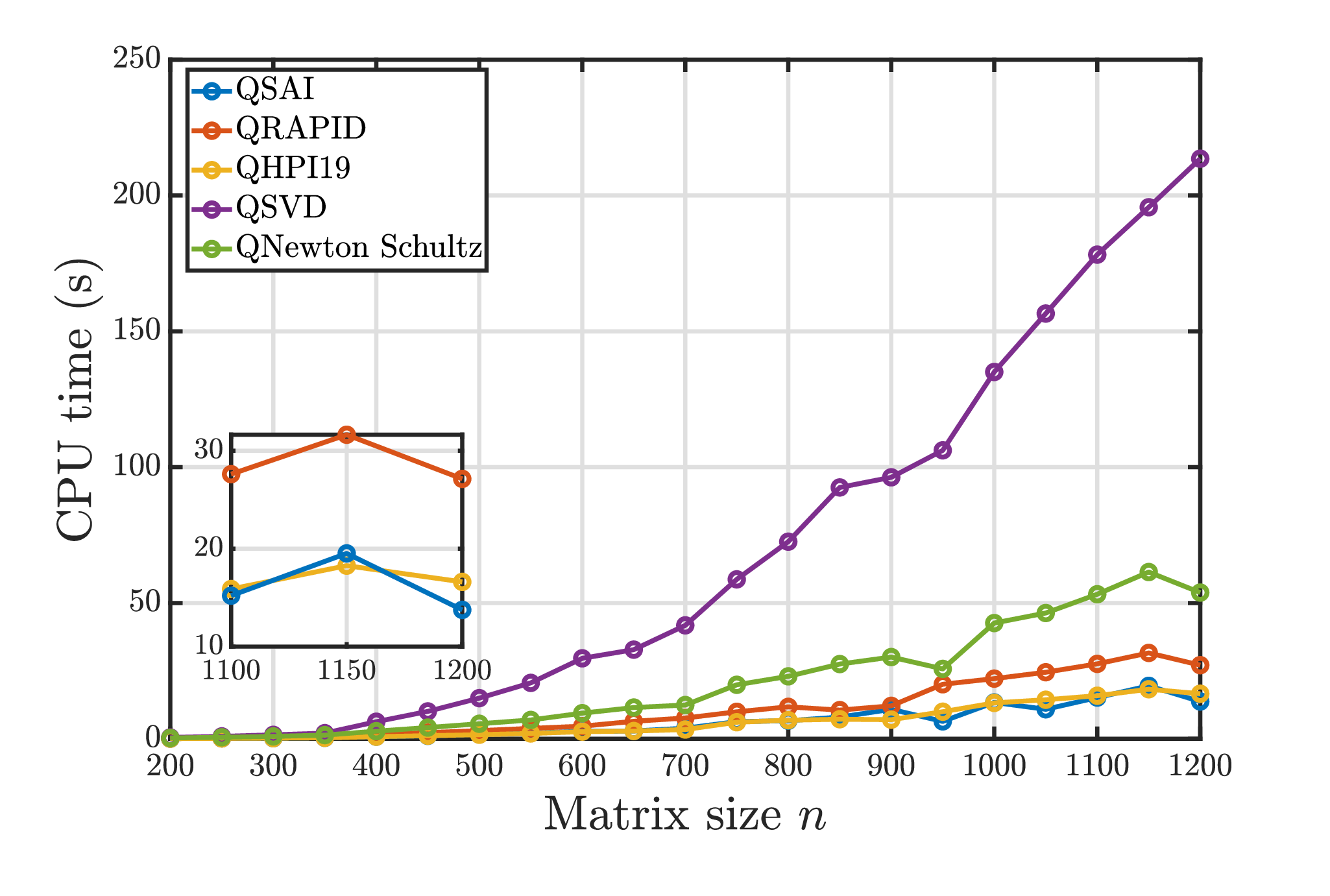}

\vspace{4pt}

\parbox{0.49\textwidth}{\centering  (a) CPU Time}

\vspace{1ex}

\includegraphics[width=0.49\textwidth]{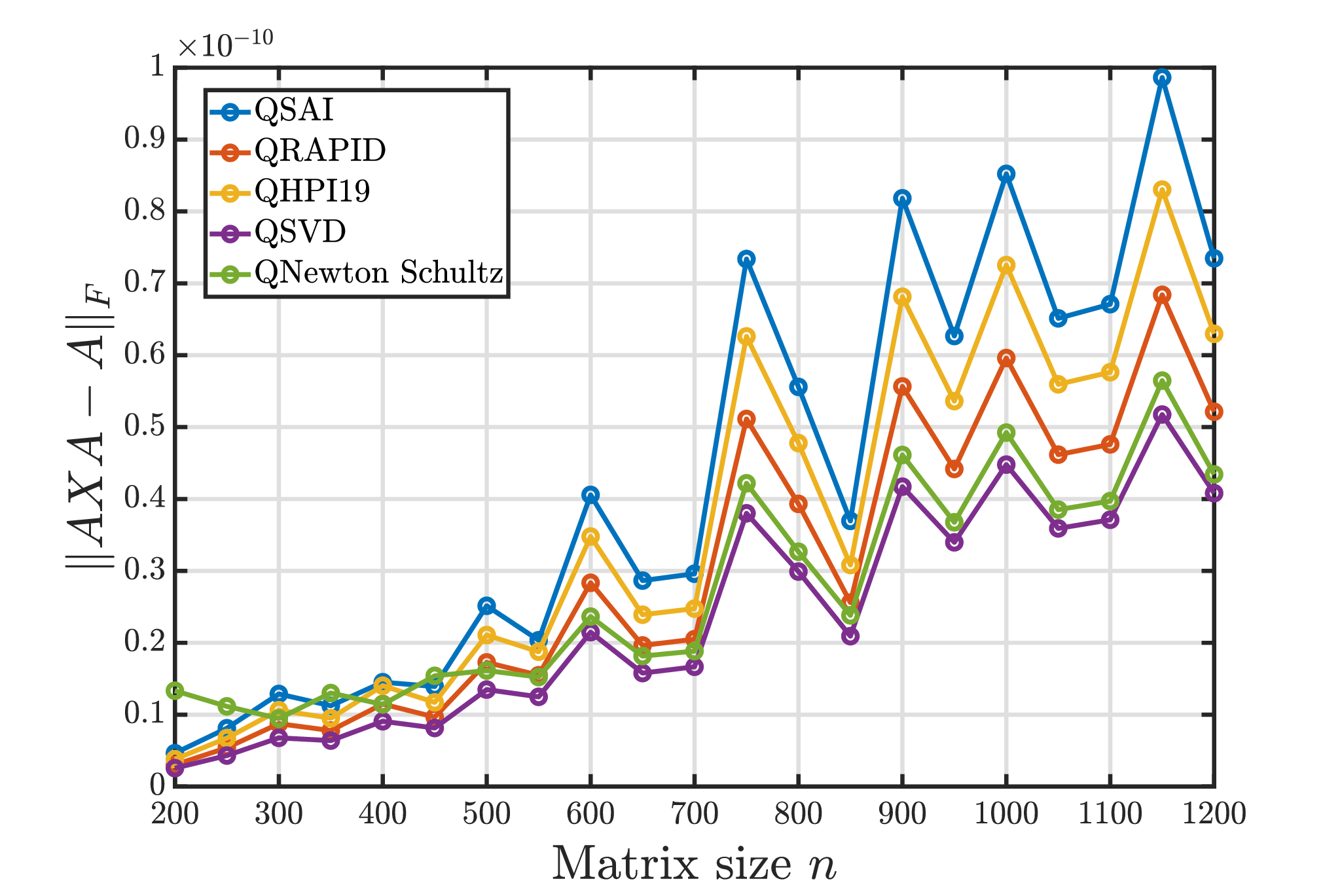}\hfill
\includegraphics[width=0.49\textwidth]{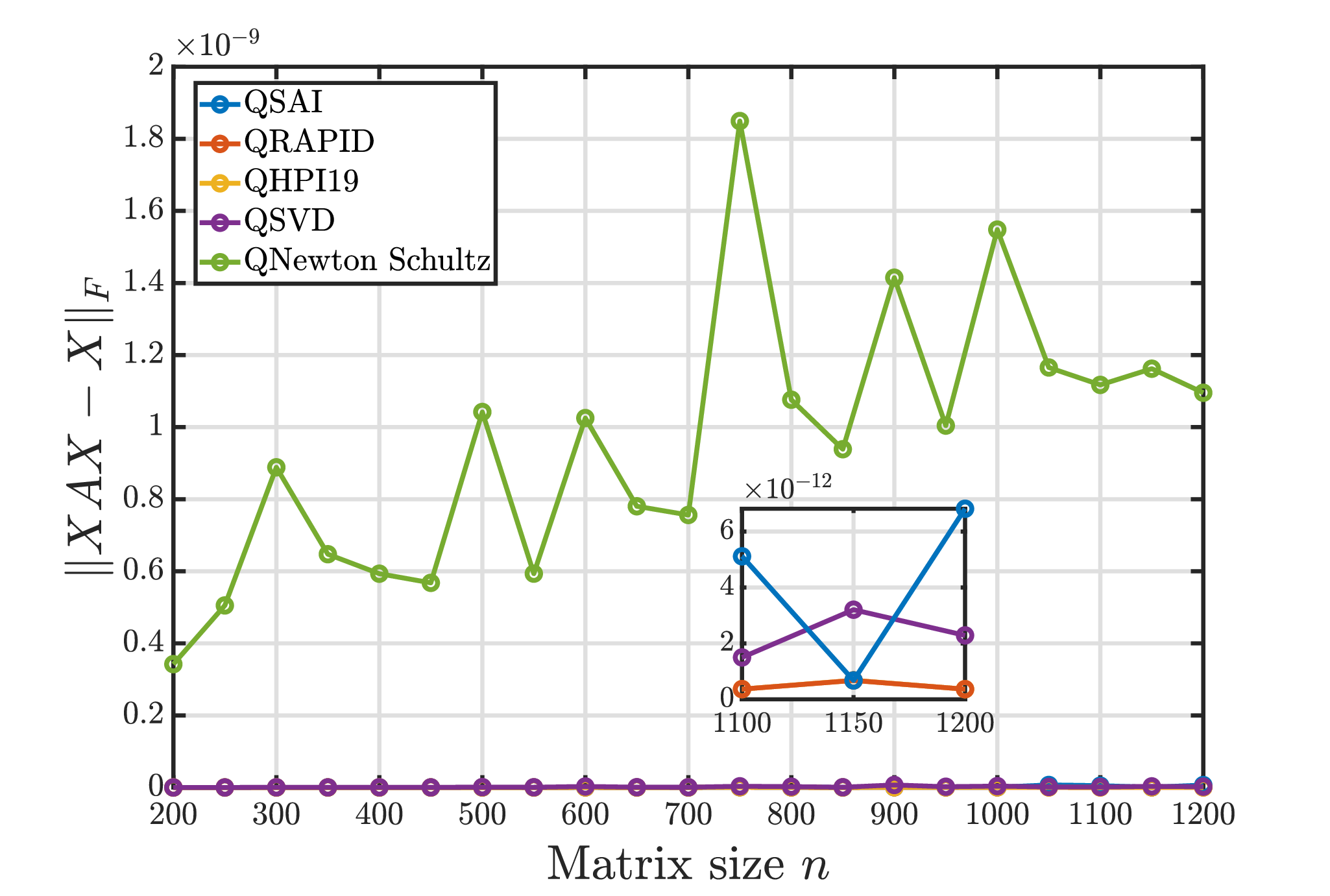}

\vspace{4pt}

\parbox{0.49\textwidth}{\centering  (b) $E_1$}\hfill
\parbox{0.49\textwidth}{\centering  (c) $E_2$}

\vspace{1ex}

\includegraphics[width=0.49\textwidth]{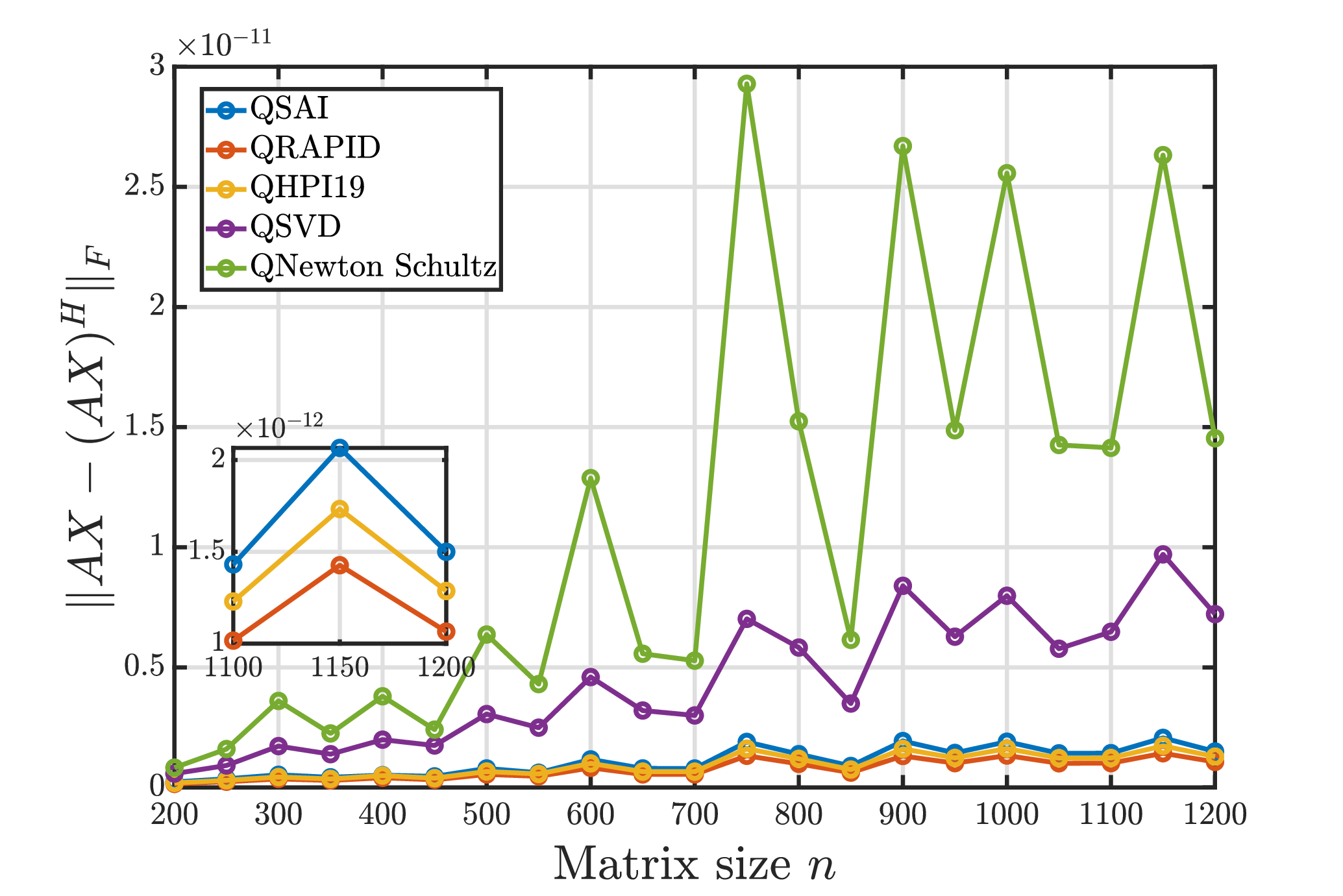}\hfill
\includegraphics[width=0.49\textwidth]{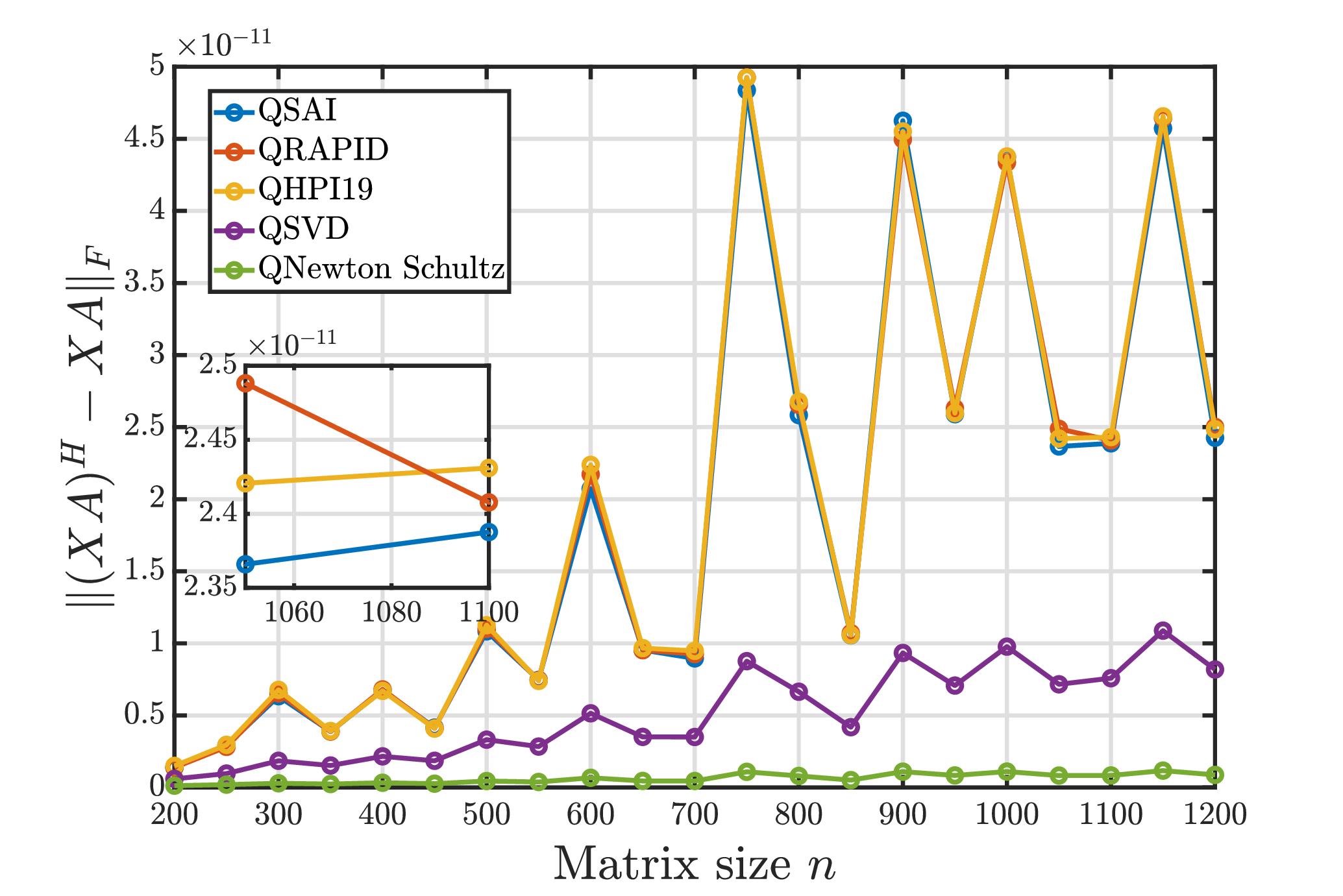}

\vspace{4pt}

\parbox{0.49\textwidth}{\centering  (d) $E_3$}\hfill
\parbox{0.49\textwidth}{\centering  (e) $E_4$}

\caption{Performance comparison of quaternion pseudoinverse methods: CPU time and error norms across matrix sizes.}
\label{fig:SAI_results}
\end{figure}

The comprehensive numerical evidence highlights the superior computational efficiency and stability of the proposed quaternion hyperpower-based methods. Among them, QSAI consistently delivers the most robust performance across all test configurations, while QRAPID and QHPI$19$ achieve comparable accuracy with slightly different trade-offs in iteration count and runtime. Overall, these results confirm that quaternion-tailored hyperpower algorithms provide a powerful and scalable alternative to QSVD and QNS for large-scale quaternion inverse problems.
\end{example}
Having demonstrated the accuracy, scalability, and robustness of the proposed quaternion iterative schemes across various problem sizes and structures, we now examine one of their key computational advantages—the impact of factorization on efficiency. Although the factorized and unfactorized hyperpower iterations share the same theoretical convergence order, the factorized forms are designed to minimize redundant quaternion matrix–matrix multiplications, thereby offering substantial savings in runtime. The following example quantitatively illustrates this improvement.

\begin{example}\label{exam_hyper}
Consider the quaternion matrix inverse approximation problem
\[
A \in \mathbb{Q}^{n\times n}, \quad X \approx A^\dagger,
\]
where each entry of \(A\) is generated from independent Gaussian distributions in all four
quaternion components. The goal is to approximate the Moore–Penrose inverse \(A^\dagger\) using different hyperpower-based iterative schemes.

\noindent\textbf{Experimental setup.}
To ensure a consistent comparison, all algorithms employ the same initialization parameter \(\alpha = 7.8798\times10^{-5}\).  
Each iteration terminates when \(\|X_{k+1} - X_k\|_F \le 10^{-10}\) or when the iteration count reaches $500$. The following four solvers are evaluated:
\begin{itemize}
  \item \textbf{QHPI$19$:} proposed $19$th–order factorized hyperpower iteration,
  \item \textbf{QSAI:} proposed $10$th–order factorized strong approximate inverse,
  \item \textbf{QHONSchultz($19$)}~\cite{leplat2025iterative}: unfactorized hyperpower iteration of order $19$,
  \item \textbf{QHONSchultz($10$)}~\cite{leplat2025iterative}: unfactorized hyperpower iteration of order $10$.
\end{itemize}
For each solver, we record the CPU time, iteration count, and the four error metrics \(E_1\)–\(E_4\) defined in~\eqref{eq_error}, which correspond to the Penrose conditions and collectively assess the numerical accuracy of the computed Moore–Penrose inverse.

\noindent\textbf{Results and discussion.}
Table~\ref{tab:hyper_n800} summarizes the results for a representative case with \(n=800\). Both proposed factorized methods QHPI$19$ and QSAI achieve the same accuracy as the unfactorized QHONSchultz algorithms, with all residuals in the range \(10^{-11}\)–\(10^{-13}\). However, the factorized versions complete in roughly half the CPU time, confirming
that factorization substantially reduces redundant quaternion multiplications without affecting numerical precision. 

\begin{table}[hbt!]
\centering
\begin{tabular}{lcccccc}
\toprule
Method & Time (s) & Iters & $E_1$ & $E_2$ & $E_3$ & $E_4$ \\
\midrule
QHPI$19$ & $6.188$ & $8$  & $6.27\times 10^{-11}$ & $8.06\times 10^{-13}$ & $1.57\times 10^{-12}$ & $4.66\times 10^{-11}$  \\
QSAI & $6.594$ & $10$ & $7.38\times 10^{-11}$ & $8.14\times 10^{-13}$ & $1.85\times 10^{-12}$ & $4.53\times 10^{-11}$  \\
QHONSchultz(order $19$) & $13.87$ & $8$  & $6.28\times 10^{-11}$ & $8.02\times 10^{-13}$ & $1.57\times 10^{-12}$ & $4.53\times 10^{-11}$  \\
QHONSchultz(order $10$)   & $11.05$ & $10$ & $6.28\times 10^{-11}$ & $8.05\times 10^{-13}$ & $1.57\times 10^{-12}$ & $4.39\times 10^{-11}$  \\
\bottomrule
\end{tabular}
\caption{Comparison for \(n=800\): factorized versus unfactorized hyperpower iterations.}
\label{tab:hyper_n800}
\end{table}
Figure~\ref{fig:cpu_time} shows the CPU time scaling with matrix dimension. The growth is approximately quadratic in \(n\), consistent with the \(\mathrm{O}(n^3)\) cost of quaternion matrix–matrix multiplications. Nevertheless, the unfactorized QHONSchultz methods are markedly slower. For instance, at \(n=800\), QHONSchultz($19$) requires more than twice the CPU time of QHPI$19$, despite achieving nearly identical accuracy (see Table~\ref{tab:hyper_n800}).  

\begin{figure}[hbt!]
\centering
\includegraphics[width=0.55\textwidth]{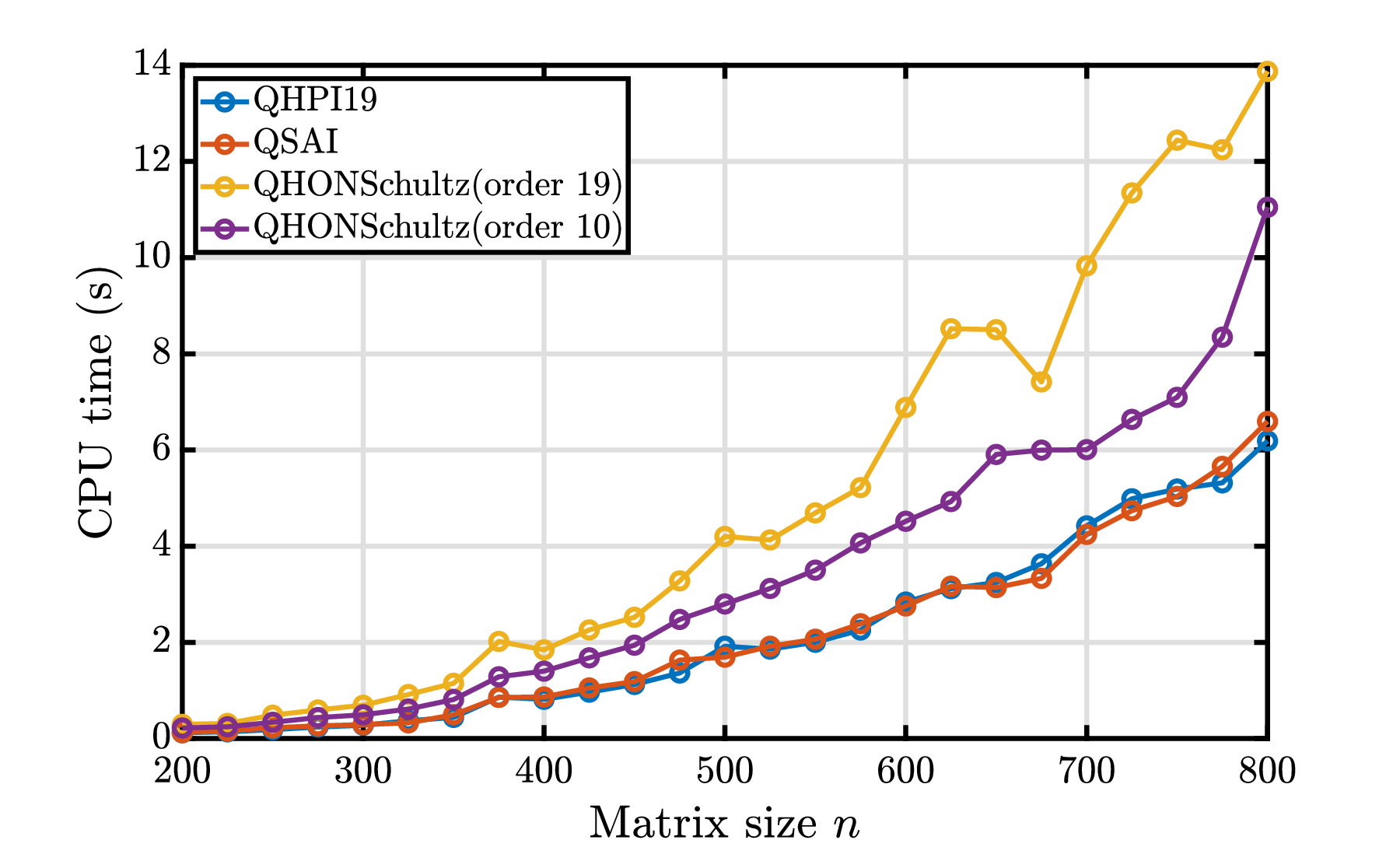}
\caption{CPU time versus matrix size for factorized and unfactorized hyperpower iterations.}
\label{fig:cpu_time}
\end{figure}

All methods yield diagnostic errors on the order of \(10^{-11}\)--\(10^{-13}\), demonstrating that factorization does not compromise numerical precision. Rather, it eliminates redundant polynomial multiplications inherent in the unfactorized updates. Specifically, while the unfactorized QHONSchultz iterations have per-iteration complexity \(\mathrm{O}(p n^3)\) (with \(p=10\) or \(19\)),
the factorized schemes reorganize the polynomial computation such that each iteration requires only \(\mathrm{O}(n^3)\) operations, independent of \(p\). This structural simplification explains the pronounced runtime advantage observed in Figure~\ref{fig:cpu_time} and establishes the practicality of the proposed factorized framework for large-scale quaternion computations. In summary, the factorized hyperpower iterations achieve the same high-order accuracy as the classical unfactorized schemes while substantially reducing computational cost. 
\end{example}
Following the analysis of factorized hyperpower schemes, we now evaluate the effectiveness of the proposed QSAI method when used as a preconditioner within Krylov subspace algorithms for solving large-scale quaternion linear systems. Preconditioning is particularly crucial in such settings, as it can significantly improve convergence speed and overall computational efficiency. Here, we focus on two generalized quaternion Krylov solvers Gl-QFOM and Gl-QGMRES implemented following Algorithms~$4$ and~$5$ of~\cite{MR4861347}, and investigate the acceleration achieved by incorporating the QSAI preconditioner.
in~\cite{MR4861347}.  
\begin{example}\label{exam1}
We consider the quaternion linear system $A X = B$, where the coefficient matrix is given by
\[
A = A^{(s)} + A^{(x)} \mathbf{i} + A^{(y)} \mathbf{j} + A^{(z)} \mathbf{k}, \quad \text{where~}
A^{(x)} = -A^{(s)}, ~ A^{(y)} = 2A^{(s)}, ~ A^{(z)} = 1.5A^{(s)},
\]
with $A^{(s)} \in \mathbb{R}^{238 \times 238}$ chosen as the \texttt{saylr1} matrix from the Matrix Market collection\footnote{See \url{https://math.nist.gov/MatrixMarket/}.}.  
The right-hand side matrix is generated as
\[
B = B^{(s)} + B^{(x)} \mathbf{i} + B^{(y)} \mathbf{j} + B^{(z)} \mathbf{k},
\]
where each $B^{(\ell)} \in \mathbb{R}^{238 \times m}$ ($\ell \in \{s,x,y,z\}$) is generated with uniformly distributed random entries.
 The number of columns $m$ in $B$ is varied as $m \in \{3,6,9,12\}$.  

\medskip
\noindent\textbf{Preconditioning setup.}
Both the Gl-QFOM and Gl-QGMRES are applied to the system in their standard and preconditioned forms. For the preconditioned variants, we employ the QSAI preconditioner \(M\), transforming the system into
\[
M A X = M B.
\]
\noindent\textbf{Evaluation metrics.}  
For each test, we record the number of iterations (IT), CPU time in seconds (Time), and the final relative residual (RR).  
The relative residual after the \(j\)-th iteration is computed as
\[
RR_j = \frac{\| B - A X_j \|_F}{\| B - A X_0 \|_F},
\]
where \(X_j\) denotes the current approximate solution and \(X_0\) is the initial guess.  The iteration process terminates when \(RR_j \le 10^{-6}\) or when the maximum iteration count \(k_{\max} = 3000\) is reached. All experiments are initialized with \(X_0 = 0\), and cases that fail to converge within \(k_{\max}\) iterations are marked with the symbol \(^{\dagger}\). For the preconditioned solvers, the reported CPU time includes both the preconditioner construction and the iterative solve.

\medskip
\noindent\textbf{Results and discussion.}  
Figure~\ref{fig_prec} depicts the convergence histories of Gl-QFOM, Gl-QGMRES, and their preconditioned counterparts for various values of \(m\). It is evident that the QSAI preconditioner substantially accelerates convergence, particularly as the system dimension increases. A detailed quantitative comparison is provided in Table~\ref{tab:quat_results}, which reports iteration counts, total CPU times, and final relative residuals. Across all test cases, the preconditioned methods converge in significantly fewer iterations and exhibit notable reductions in computational time, while maintaining residuals close to the prescribed tolerance. 

\begin{table}[hbt!]
\centering
\begin{tabular}{ccccc}
\toprule
$m$ & Algorithm &  IT & Time(s) & RR \\
\midrule
 $3$& Gl-QFOM \cite{MR4861347}  & $697$ & $10.9431$ & $6.6738e-07$ \\
& Gl-QGMRES \cite{MR4861347}&  $697$ & $10.7126$ & $6.0361e-07$ \\
& Preconditioned Gl-QFOM  & $308$ & $2.0735$ & $9.2945e-07$ \\
& Preconditioned Gl-QGMRES  & $244$ & $1.2898$ & $9.8907e-07$ \\
\midrule
$6$ & Gl-QFOM \cite{MR4861347}& $1268$ & $50.5642$ & $6.9230e-07$ \\
&Gl-QGMRES \cite{MR4861347} & $1264$ & $49.9777$ & $9.9840e-07$ \\
 &Preconditioned Gl-QFOM  & $542$ & $8.7815$ & $7.9485e-07$ \\
&Preconditioned Gl-QGMRES  & $419$ & $5.1615$ & $9.8399e-07$ \\
\midrule
$9$ & Gl-QFOM \cite{MR4861347} &  $1756$ & $278.1528$ & $9.8520e-07$ \\
   &  Gl-QGMRES \cite{MR4861347}&  $1752$ & $140.8921$ & $8.9032e-07$ \\
  & Preconditioned Gl-QFOM  & $746$ & $24.0042$ & $8.9876e-07$ \\
   & Preconditioned Gl-QGMRES  & $568$ & $13.2703$ & $9.9974e-07$ \\
\midrule
 $12$& Gl-QFOM \cite{MR4861347} & $2218$ & $449.8252$ & $9.5874e-07$ \\
& Gl-QGMRES \cite{MR4861347} &  $2212$ & $265.0176$ & $9.1120e-07$ \\
& Preconditioned Gl-QFOM  & $951$ & $47.6353$ & $9.4756e-07$ \\
& Preconditioned Gl-QGMRES  & $703$ & $23.9888$ & $9.9987e-07$ \\
\bottomrule
\end{tabular}
\caption{Numerical results of Example~\ref{exam1}.}
\label{tab:quat_results}
\end{table}

\begin{figure}[hbt!]
  \centering
  \includegraphics[width=0.48\textwidth]{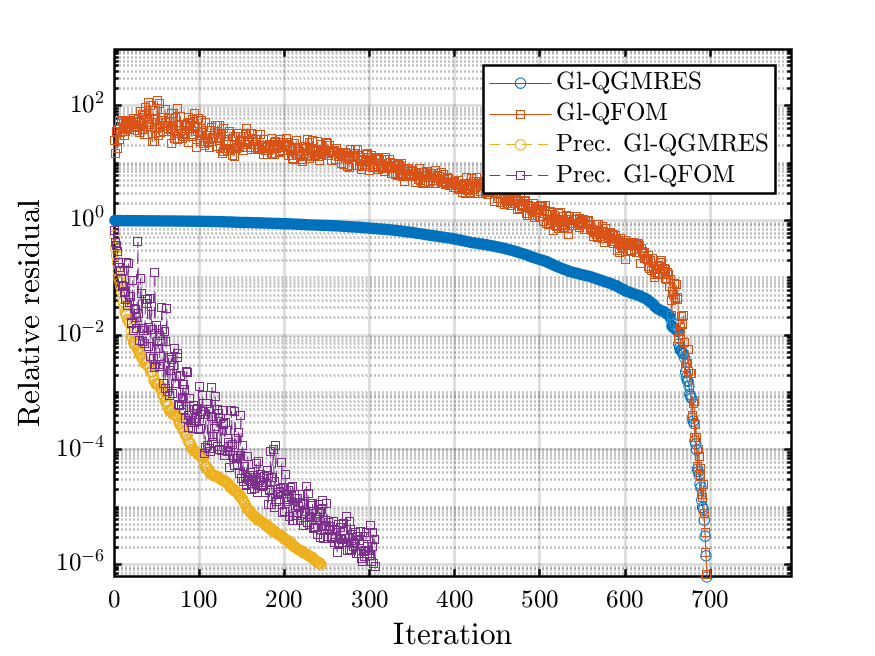}\hfill
  \includegraphics[width=0.48\textwidth]{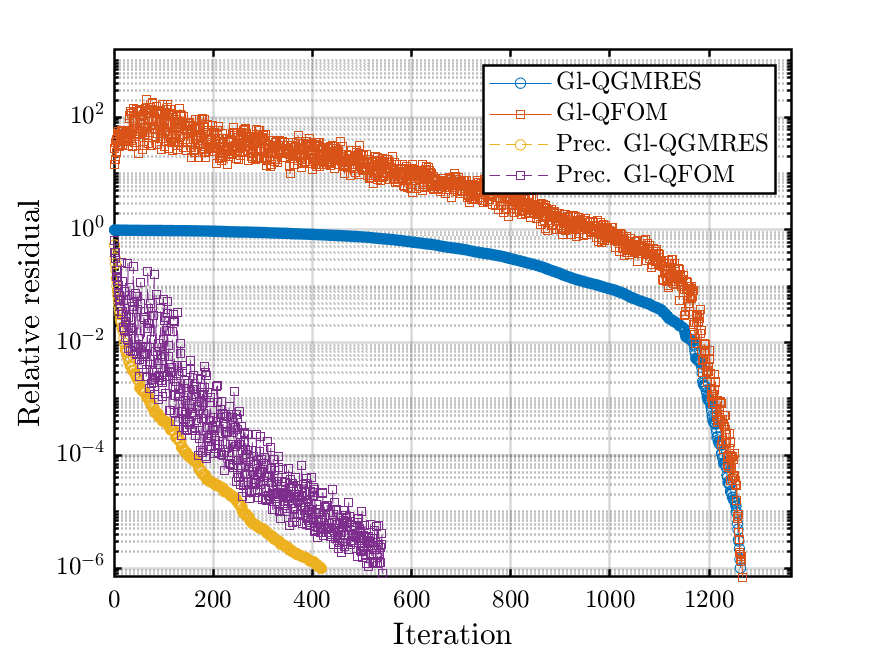}
  
  \vspace{4pt}
  
  \parbox{0.48\textwidth}{\centering  (a) Convergence curves for $m=3$.}\hfill
  \parbox{0.48\textwidth}{\centering  (b) Convergence curves for $m=6$.}
  
  \vspace{8pt}
  
  \includegraphics[width=0.48\textwidth]{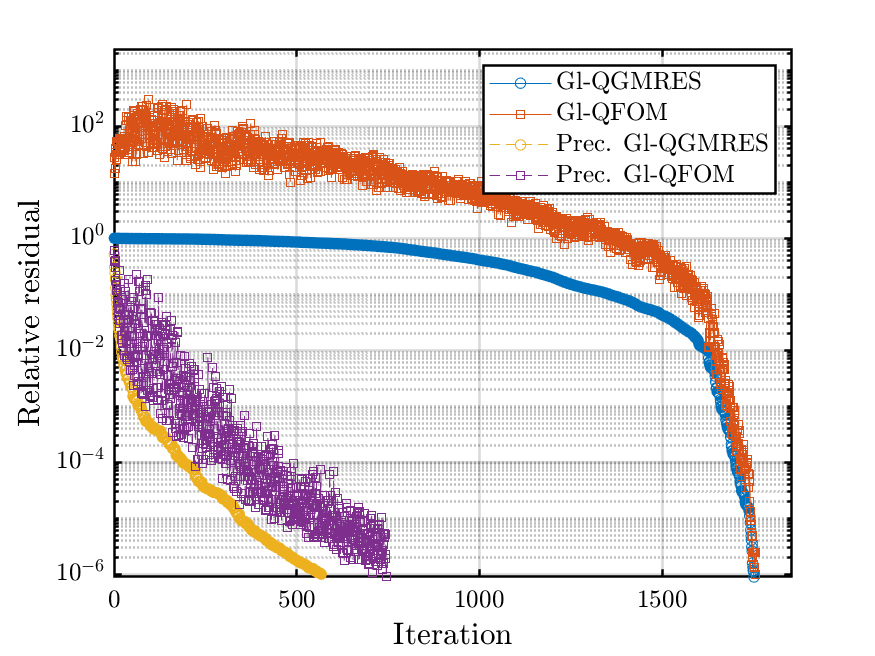}\hfill
  \includegraphics[width=0.48\textwidth]{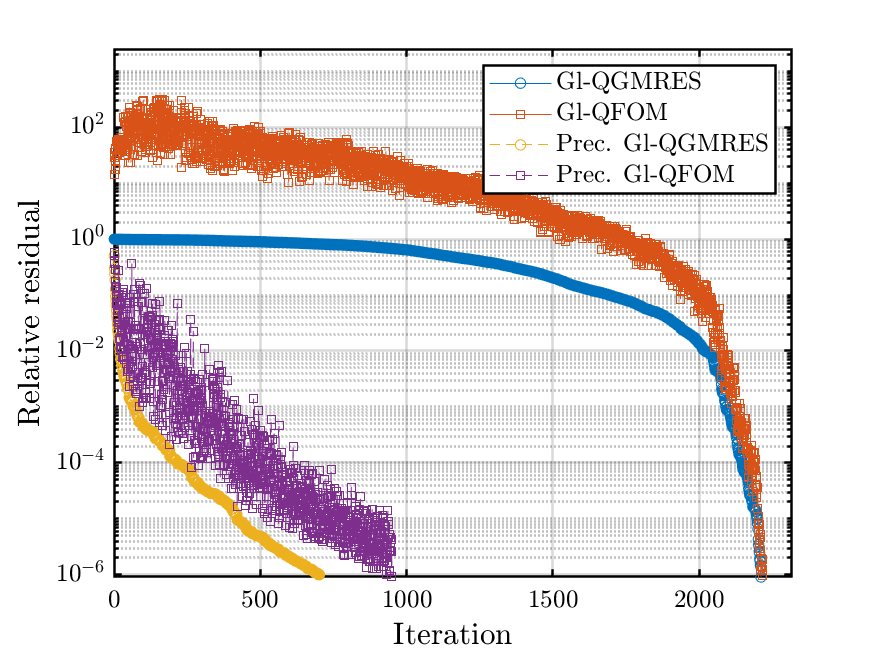}
  
  \vspace{4pt}
  
  \parbox{0.48\textwidth}{\centering  (c) Convergence curves for $m=9$.}\hfill
  \parbox{0.48\textwidth}{\centering  (d) Convergence curves for $m=12$.}
  
  \caption{Convergence curves of Gl-QFOM, Gl-QGMRES, and their preconditioned (prec.) versions 
for Example~\ref{exam1} with different values of $m$.}
  \label{fig_prec}
\end{figure}

\end{example}

\section{Applications}\label{sec5}
The proposed quaternion iterative methods are now applied to two representative problems to demonstrate their practical effectiveness and versatility. The first application focuses on color image completion using a CUR decomposition framework, while the second addresses the filtering of chaotic three-dimensional signals. Together, these applications illustrate how the proposed algorithms can efficiently handle multidimensional real-world data.

\subsection{Image completion via CUR}
Color images can be naturally modeled using quaternion algebra, which enables unified processing of the red, green, and blue channels. In this representation, each pixel is expressed as a purely imaginary quaternion, where the three imaginary components correspond to the RGB values and the scalar part is zero. This formulation captures inter-channel correlations.

In quaternion image completion, the objective is to reconstruct a missing or corrupted quaternion-valued matrix \( A \in \mathbb{Q}^{m \times n} \) from partially observed data.  A binary mask \( \Omega \in \{0,1\}^{m \times n} \) indicates the observed entries, while the matrix \( M \) stores the known pixel values. The standard iterative impute--reconstruct framework~\cite{wu2025efficient} can be written as
\begin{equation}
\left.
\begin{aligned}
X^{(t)} &\leftarrow \mathcal{L}\!\left(C^{(t)}\right), \\
C^{(t+1)} &\leftarrow \Omega \odot M + (1-\Omega) \odot X^{(t)},
\end{aligned}
\;\;\right\}
\label{eq:update}
\end{equation}
where \( X^{(t)} \) is the $t$-th step low-rank estimate, \( \mathcal{L}(\cdot) \) is a quaternion low-rank reconstruction operator, and \( \odot \) denotes the Hadamard product. At each iteration, the filled-in matrix \( C^{(t)} \) preserves the observed entries from \( M \) while updating the missing entries from \( X^{(t-1)} \), thereby ensuring data consistency across iterations.

A widely used reconstruction strategy in this context is the \emph{CUR} (cross) approximation, which decomposes \( A \in \mathbb{Q}^{m \times n} \) into three smaller factors. Specifically, representative columns and rows are selected as \( C = A_{:,J} \) and \( R = A_{I,:} \), where \( J \subset [n] \) and \( I \subset [m] \) are index sets of size \( r \), corresponding to the desired rank.  
The coupling matrix \( U \in \mathbb{Q}^{r \times r} \) links \( C \) and \( R \) via
\begin{equation*}
A \approx C U R.
\end{equation*}
The optimal \( U \), minimizing the Frobenius norm of the approximation error, is obtained from
\begin{equation*}
U = \arg\min_{U} \|A - C U R\|_F,
\end{equation*}
whose quaternion solution is
\begin{equation}
U = C^{\dagger} A R^{\dagger}.
\label{eq:Uopt}
\end{equation}
 An alternative, often termed the cross approximation, defines
\begin{equation}
U = W^{\dagger}, \quad W = A_{I,J},
\label{eq:Ucross}
\end{equation}
using the intersection submatrix \( W \).  
When the index sets \( I \) and \( J \) are selected appropriately, \eqref{eq:Ucross} and \eqref{eq:Uopt} yield equivalent results.

In practice, formulation \eqref{eq:Uopt} is more stable as it jointly incorporates row and column information via \( C \) and \( R \). However, it incurs a higher computational cost since it requires pseudoinverses of larger matrices.  
In contrast, the cross form \eqref{eq:Ucross} is computationally cheaper but its accuracy strongly depends on the selection of representative rows and columns.  
The pseudoinverse step is thus central to CUR-based completion, as each reconstruction involves computing one or more pseudoinverses of quaternion submatrices. The conventional QSVD-based pseudoinverse provides high accuracy but is computationally expensive and poorly suited for large-scale quaternion image or video data.  
To address this limitation, iterative quaternion solvers such as the QNS, and the proposed QHPI$19$, QRAPID and QSAI schemes can be employed. These algorithms require only quaternion matrix multiplications and adjoints, thereby achieving faster runtimes and lower memory usage. To enhance reconstruction quality, spatial regularization can be incorporated into the CUR framework. To further improve completion quality, CUR can be augmented with mild spatial priors. For instance, after each reconstruction step $X^{(t)}$, applying a two-dimensional Gaussian filter with standard deviation $\sigma = 0.5$ 
acts as a spatial regularizer. This suppresses noise and reduces visual artifacts, leading to consistent improvements 
in PSNR and SSIM metrics.

To evaluate performance, the proposed quaternion iterative methods were tested on the Kodak color image \textit{kodim16} (\(512 \times 768 \times 3\)), where 70\% of the pixels were randomly removed. A rank-$60$ CUR-based completion was performed for $25$ iterations and compared with the QSVD-based baseline. The reconstructed results, shown in Figure~\ref{fig:six_panel}, demonstrate that all iterative approaches achieve reconstruction quality comparable to QSVD while requiring significantly less computation time.  
The evolution of PSNR and SSIM over iterations, presented in Figure~\ref{fig:psnr_ssim_grid}, further confirms that the proposed methods yield stable and high-quality reconstructions with substantially improved efficiency.

\begin{figure*}[t]
  \centering
  \includegraphics[width=0.32\textwidth]{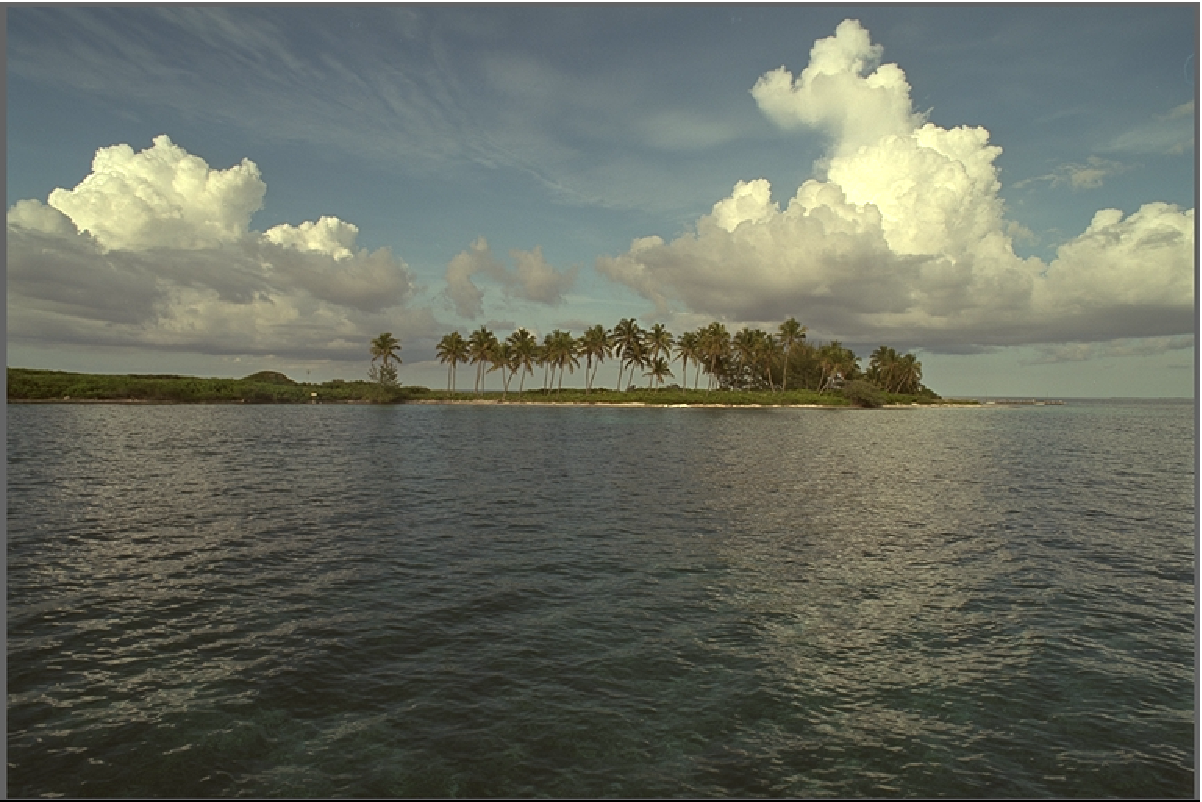}\hfill
  \includegraphics[width=0.32\textwidth]{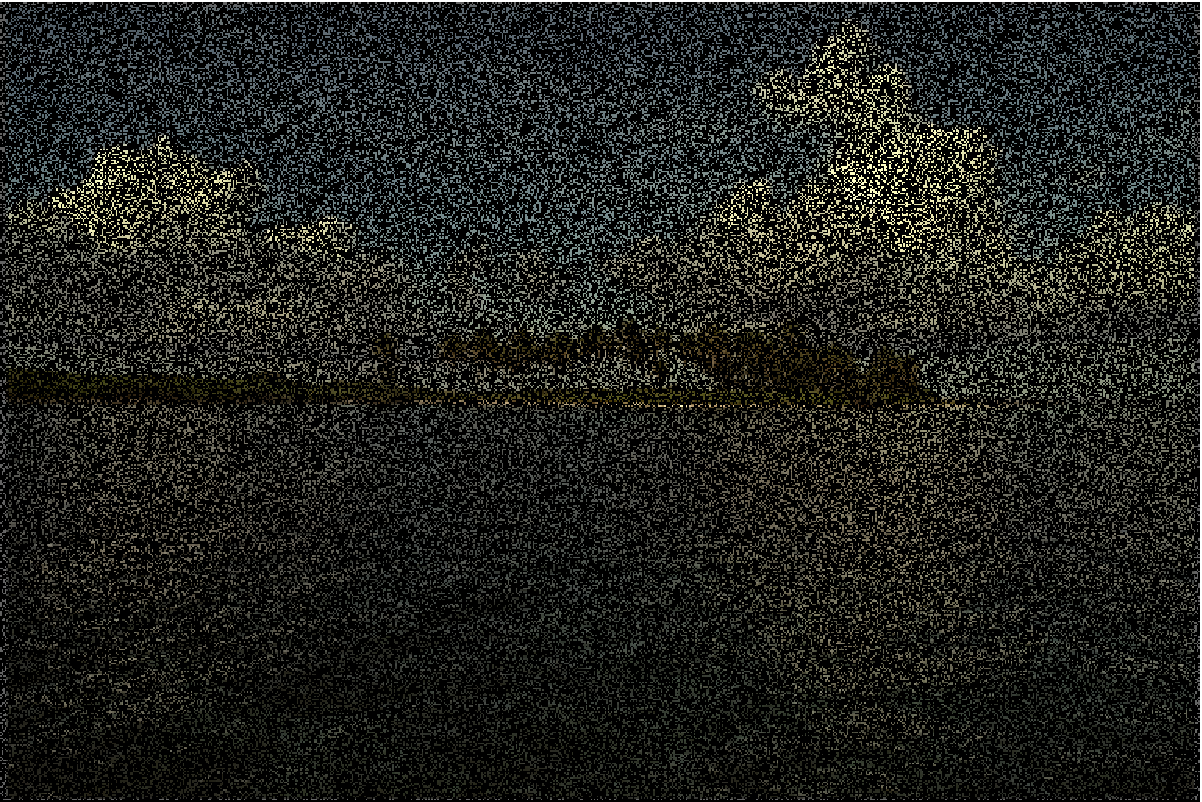}\hfill
  \includegraphics[width=0.32\textwidth]{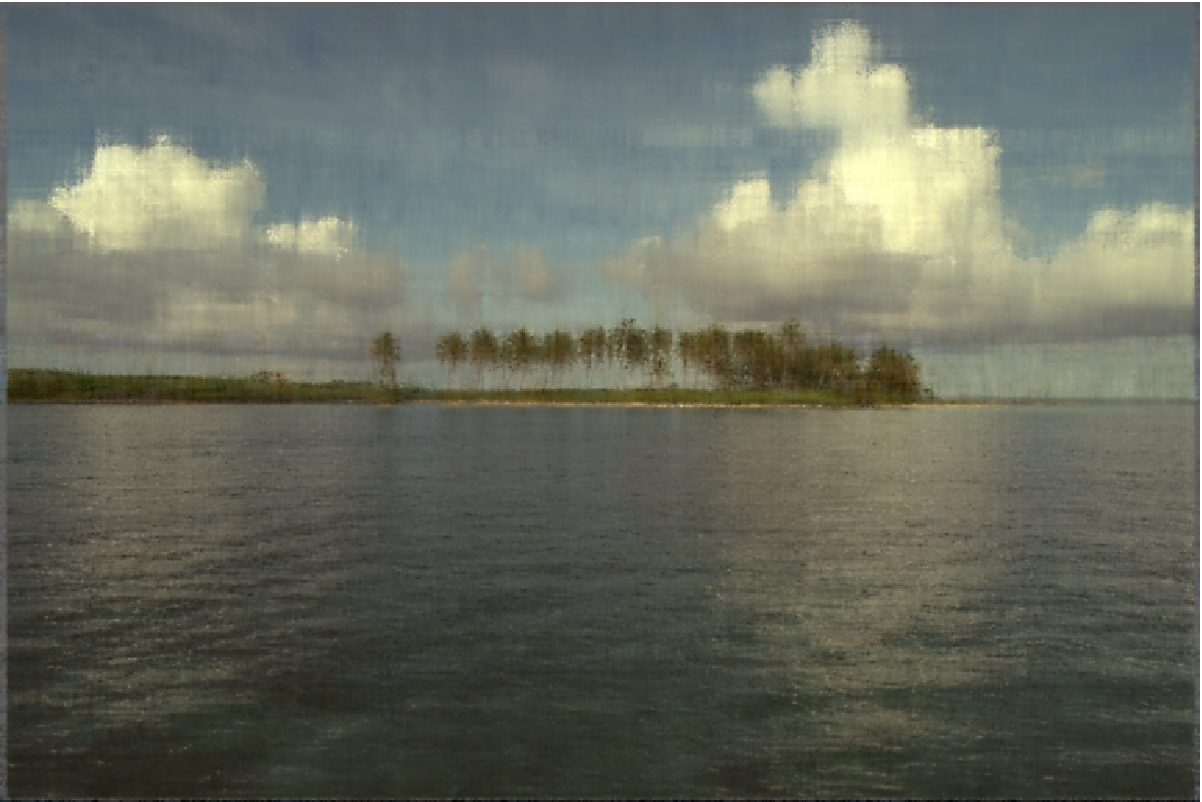}
  
  \vspace{4pt}
  
  \parbox{0.32\textwidth}{\centering  (a) Original}\hfill
  \parbox{0.32\textwidth}{\centering  (b) Masked \\  PSNR = 8.83 dB}\hfill
  \parbox{0.32\textwidth}{\centering  (c) Recovered (QSVD) \\  PSNR = 27.8005 dB \\  SSIM = 0.8111 \\  Time = 235.5673 s}
  
  \vspace{8pt}
  
  \includegraphics[width=0.32\textwidth]{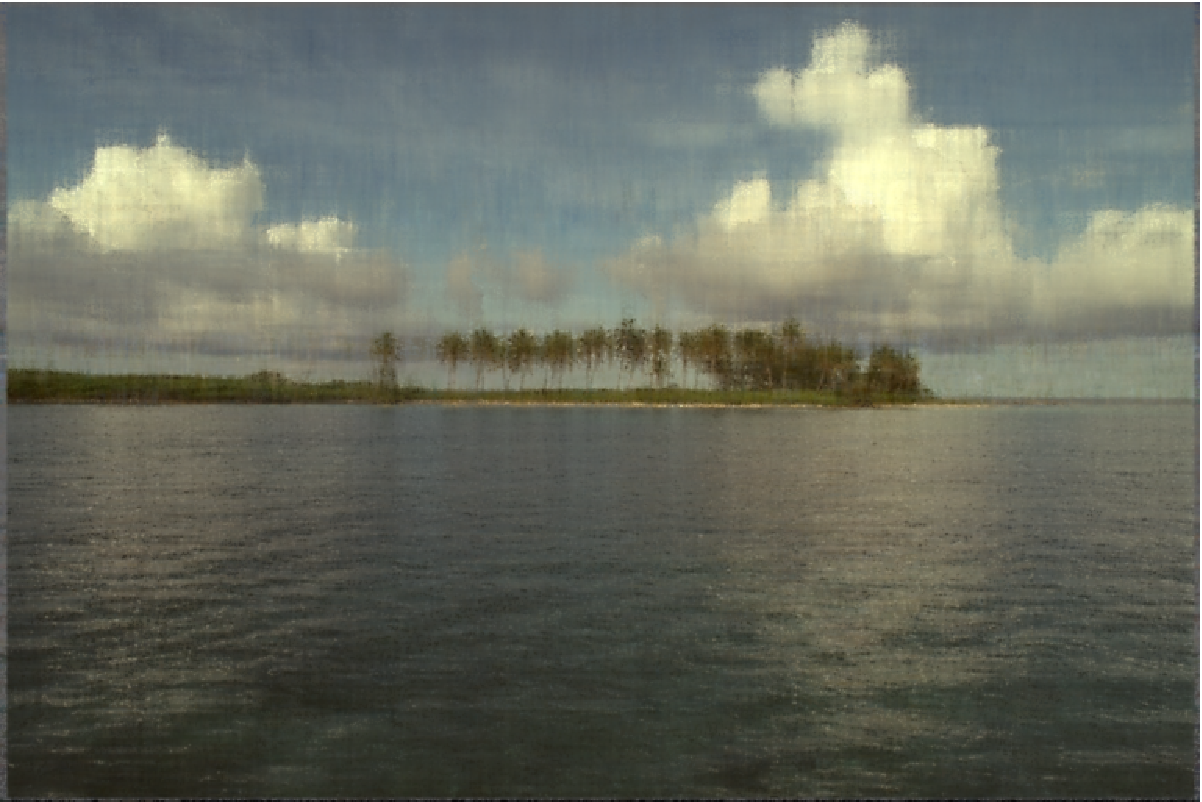}\hfill
  \includegraphics[width=0.32\textwidth]{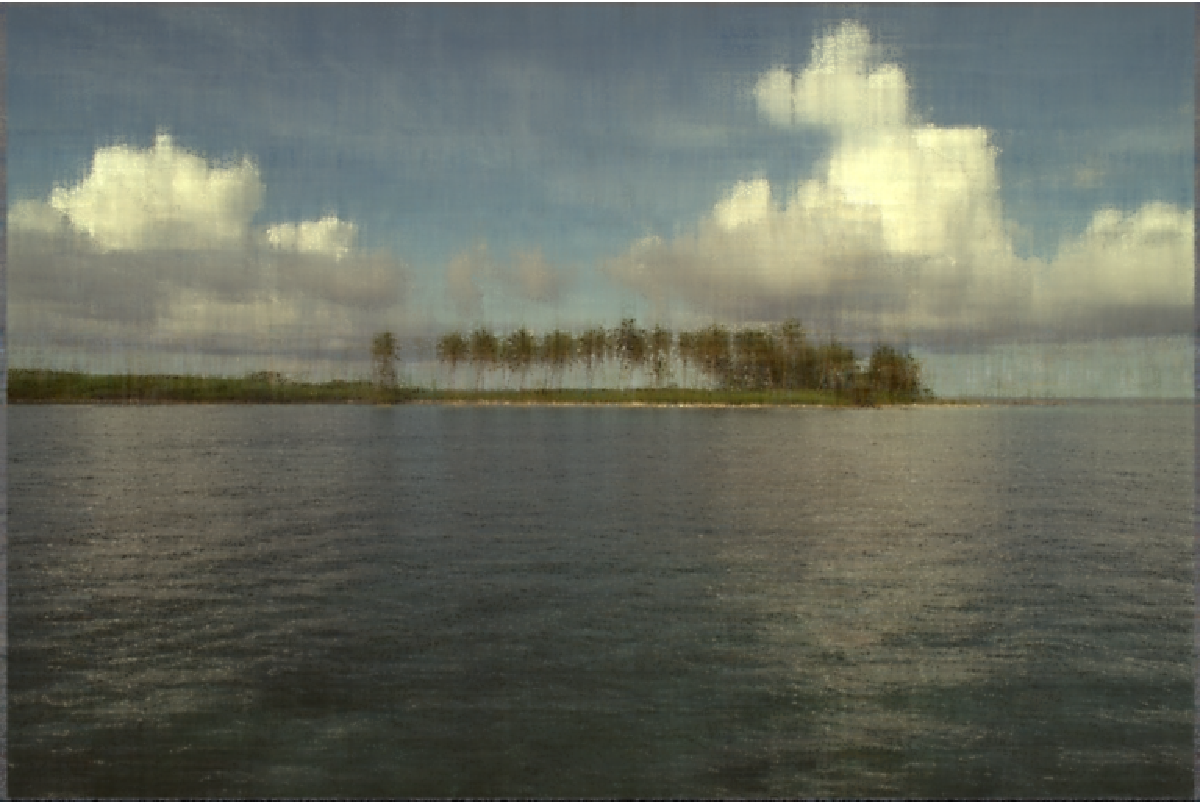}\hfill
  \includegraphics[width=0.32\textwidth]{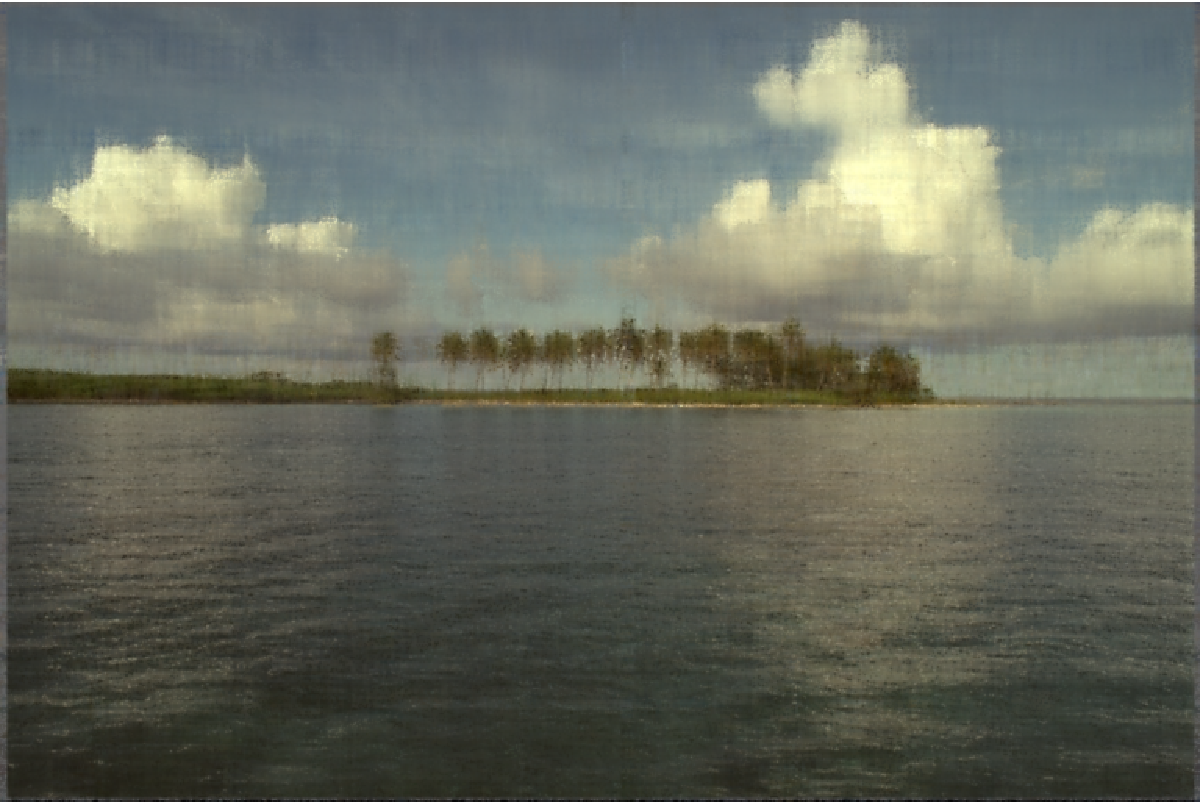}
  
  \vspace{4pt}
  
  \parbox{0.32\textwidth}{\centering  (d) Recovered (QSAI) \\  PSNR = 27.8559 dB \\  SSIM = 0.8113 \\  Time = 135.7565 s}\hfill
  \parbox{0.32\textwidth}{\centering  (e) Recovered (QRAPID) \\  PSNR = 27.8515 dB \\  SSIM = 0.8114 \\  Time = 168.2499 s}\hfill
  \parbox{0.32\textwidth}{\centering  (f) Recovered (QHPI$19$) \\  PSNR = 27.9134 dB \\  SSIM = 0.8126 \\  Time = 150.3917 s}
  
\caption{Quaternion CUR-based image completion results for the \textsc{Kodim16} image with 70\% missing pixels. 
Panels~(a)–(b) show the original and masked images, respectively, while panels~(c)–(f) present reconstructions using QSVD, QSAI, QRAPID, and QHPI$19$ -based pseudoinverses. 
For each method, the corresponding PSNR (dB), SSIM, and total runtime (seconds) are reported below the image.}
\label{fig:six_panel}
\end{figure*}

\begin{figure}[hbt!]
  \centering
  \includegraphics[width=0.48\textwidth]{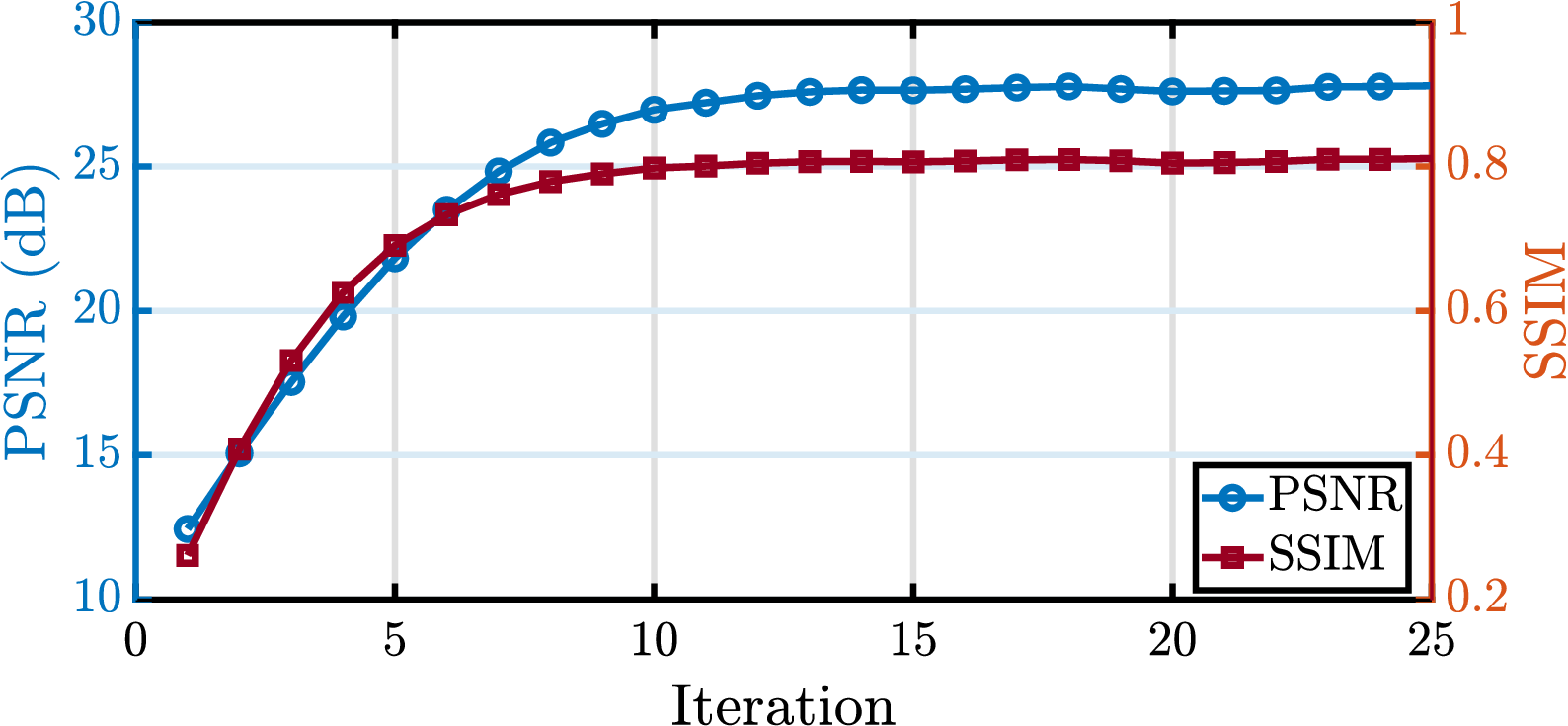}\hfill
  \includegraphics[width=0.48\textwidth]{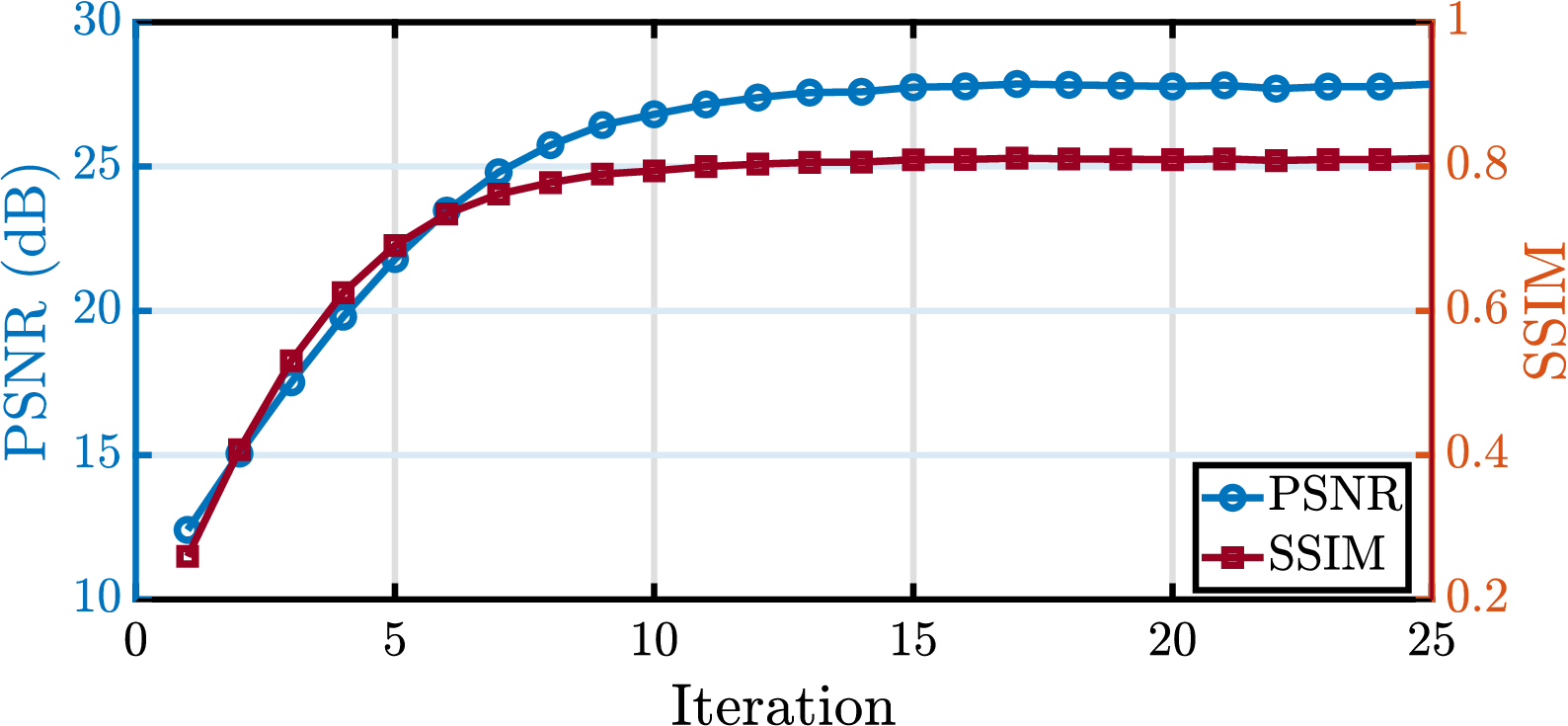}
  
  \vspace{4pt}
  
  \parbox{0.48\textwidth}{\centering  (a) QSVD: PSNR / SSIM vs iteration}\hfill
  \parbox{0.48\textwidth}{\centering  (b) QSAI: PSNR / SSIM vs iteration}
  
  \vspace{8pt}
  
  \includegraphics[width=0.48\textwidth]{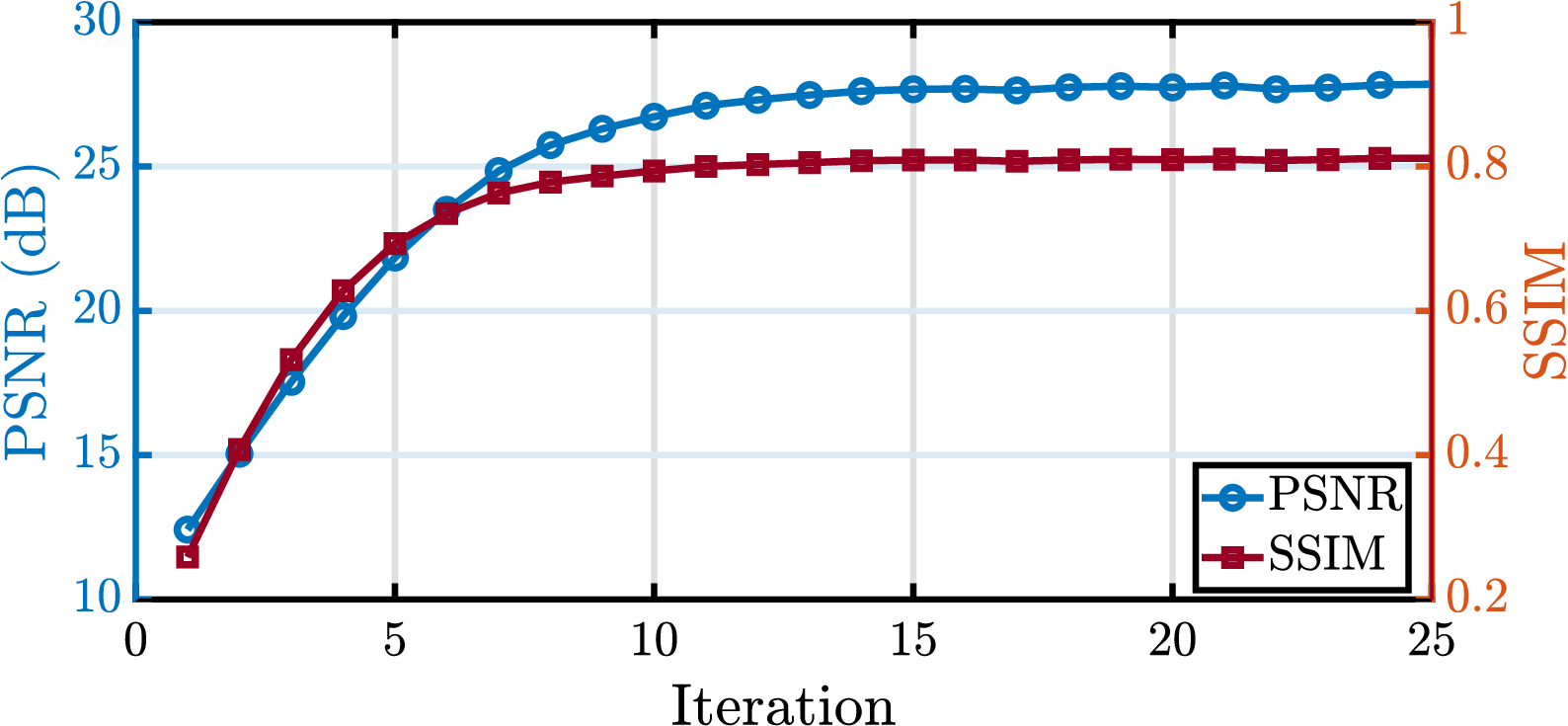}\hfill
  \includegraphics[width=0.48\textwidth]{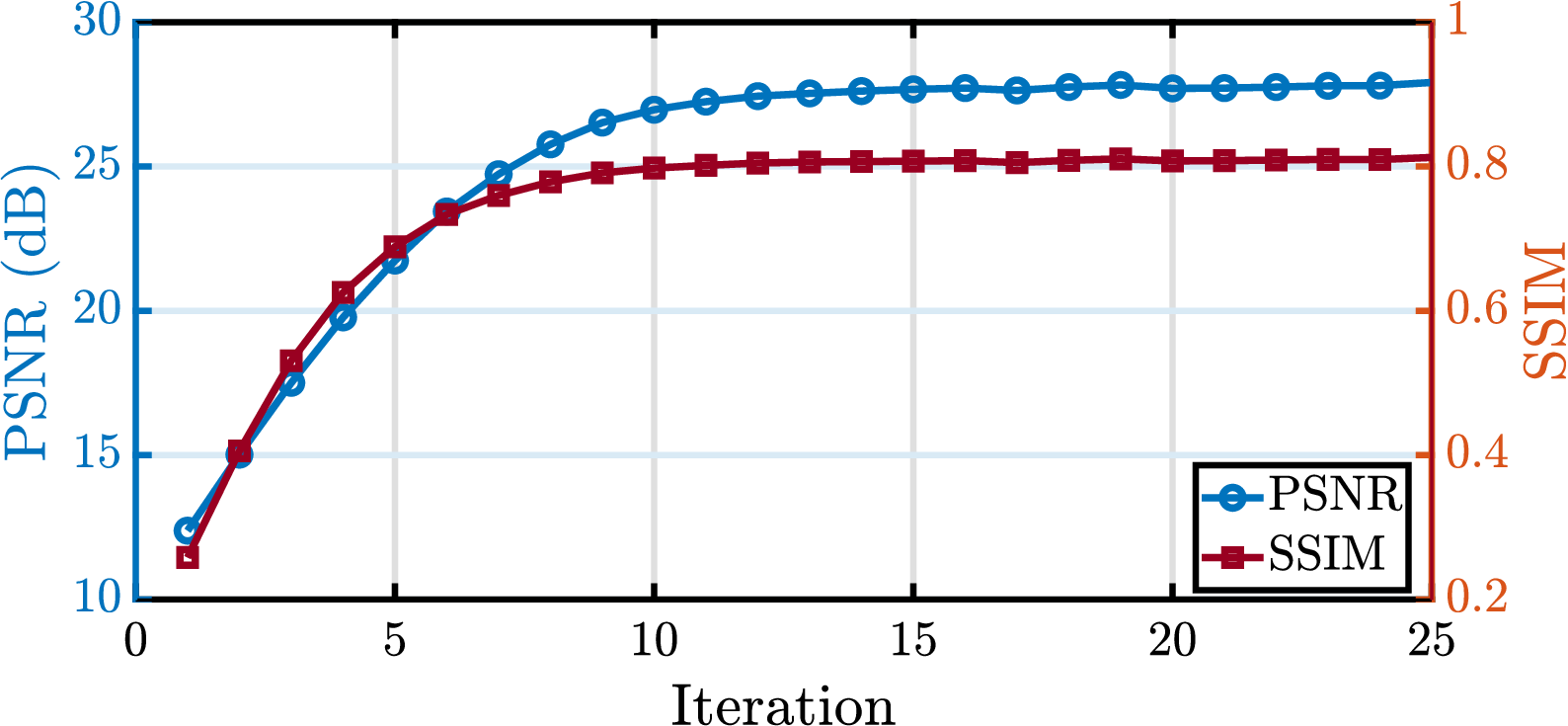}
  
  \vspace{4pt}
  
  \parbox{0.48\textwidth}{\centering  (c) QRAPID: PSNR / SSIM vs iteration}\hfill
  \parbox{0.48\textwidth}{\centering  (d) QHPI$19$: PSNR / SSIM vs iteration}
  
\caption{Evolution of reconstruction quality during quaternion CUR-based image completion of \textsc{Kodim16}. 
Each subplot shows the progression of PSNR and SSIM with respect to the number of iterations for different pseudoinverse computation methods: (a) QSVD, (b) QSAI, (c) QRAPID, and (d) QHPI$19$.}
  \label{fig:psnr_ssim_grid}
\end{figure}

\subsection{Filtering of chaotic three-dimensional signals}
Chaotic dynamical systems often produce multidimensional outputs that are highly sensitive to noise and delays, making reliable signal recovery a difficult task. To demonstrate the use of quaternion-valued filtering in such contexts, we focus on the Lorenz system, a canonical model in nonlinear dynamics known for its chaotic trajectories. The Lorenz equations are given by
\begin{equation}
\left.
\begin{array}{rcl}
\displaystyle \frac{du}{dt} &=& \sigma (v-u), \\[1ex]
\displaystyle \frac{dv}{dt} &=& u(\gamma - w) - v, \\[1ex]
\displaystyle \frac{dw}{dt} &=& uv - \delta w,
\end{array}
\right\}
\label{eq:lorenz}
\end{equation}
where $\sigma$, $\delta$, and $\gamma$ are positive constants controlling the chaotic dynamics. 

Let the solutions of Eq.\eqref{eq:lorenz} be denoted by $u(t)$, $v(t)$, and $w(t)$. These are combined into a quaternion-valued reference signal
\[
\mathbf{s}(t) = u(t)\,\mathbf{i} + v(t)\,\mathbf{j} + w(t)\,\mathbf{k}.
\]
The observed input is assumed to suffer from both delay and additive noise, and is modeled as
\[
\mathbf{x}(t) = u(t-\tau)\,\mathbf{i} + v(t-\tau)\,\mathbf{j} + w(t-\tau)\,\mathbf{k} + \mathbf{\eta}(t),
\]
where $\tau=1$ is the delay parameter, and $\mathbf{\eta}(t)$ denotes purely imaginary quaternion noise. The goal is to design a quaternion filter $\{\mathbf{h}_m\}_{m=0}^{p}$, with coefficients
\[
\mathbf{h}_m = h_{m,0} + h_{m,1}\mathbf{i} + h_{m,2}\mathbf{j} + h_{m,3}\mathbf{k},
\]
such that the filtered signal approximates the clean target signal
\begin{equation}
\mathbf{s}(t) \approx \sum_{m=0}^{p} \mathbf{x}(t-m) \mathbf{h}_m.
\label{eq:conv}
\end{equation}
Rewriting Eq.\eqref{eq:conv} in matrix–vector form gives
\[
\mathbf{X}\mathbf{h} = \mathbf{s},
\]
where
\[
\mathbf{X} = 
\begin{bmatrix}
\mathbf{x}(t) & \mathbf{x}(t-1) & \cdots & \mathbf{x}(t-p) \\
\mathbf{x}(t+1) & \mathbf{x}(t) & \cdots & \mathbf{x}(t-p+1) \\
\vdots & \vdots & \ddots & \vdots \\
\mathbf{x}(t+p) & \mathbf{x}(t+p-1) & \cdots & \mathbf{x}(t)
\end{bmatrix},
\quad
\mathbf{h} = 
\begin{bmatrix}
\mathbf{h}_0 \\ \mathbf{h}_1 \\ \vdots \\ \mathbf{h}_p
\end{bmatrix},
\quad
\mathbf{s} =
\begin{bmatrix}
\mathbf{s}(t) \\ \mathbf{s}(t+1) \\ \vdots \\ \mathbf{s}(t+p)
\end{bmatrix}.
\]
Here, $\mathbf{X}$ is a quaternion data matrix constructed from delayed input samples, $\mathbf{h}$ is the coefficient vector, and $\mathbf{s}$ is the target signal vector. 
The optimal coefficients are obtained by solving $\mathbf{h} = \mathbf{X}^{\dagger}\mathbf{s}$. To evaluate the pseudoinverse required for solving this system, we employ three different approaches: a direct pseudoinverse computation, the QNS iterative scheme~\cite{leplat2025iterative}, and proposed QSAI iterative method. The filtered output is $\hat{\mathbf{s}} = \mathbf{X}\mathbf{h}$, and the recovery error is measured as
\[
\varepsilon = \frac{\|\hat{\mathbf{s}} - \mathbf{s}\|}{\|\mathbf{s}\|}.
\]
To assess the effectiveness of quaternion-based filtering, we consider the Lorenz system with standard chaotic parameters $\sigma = 10$, $\delta = 8/3$, and $\gamma = 28$, and initial condition $[1,1,1]^T$. The system is integrated over $[0,40]$ using MATLAB’s \texttt{ode45} solver with step sizes $\Delta t \in \{0.01,0.02,0.03,0.04,0.05\}$. The clean signal components $(u,v,w)$ are combined into a quaternion reference signal, while the observed input is obtained by introducing both delay and additive quaternion noise.  
\begin{table}[hbt!]
\centering
\begin{tabular}{cccc}
\toprule
Step size $\Delta t$ & Method & Time (s) & Error $\varepsilon$ \\
\midrule
$0.01$  & QSVD            & $994.45$  & $3.72\times10^{-14}$ \\
      & QNS \cite{leplat2025iterative}  & $254.35$  & $6.43\times10^{-13}$ \\
      & Proposed QSAI   & $74.34$  & $5.35\times10^{-12}$ \\
\midrule
$0.02$  & QSVD            & $115.61$  & $2.09\times10^{-14}$ \\
      & QNS \cite{leplat2025iterative} & $37.27$  & $4.96\times10^{-13}$ \\
      & Proposed QSAI   & $10.21$  & $1.15\times10^{-14}$ \\
\midrule
$0.03$  & QSVD            & $35.34$  & $2.43\times10^{-14}$ \\
      & QNS \cite{leplat2025iterative} & $14.71$  & $5.33\times10^{-13}$ \\
      & Proposed QSAI   & $4.22$  & $2.42\times10^{-14}$ \\
\midrule
$0.04$  & QSVD            & $15.11$  & $1.31\times10^{-14}$ \\
      & QNS \cite{leplat2025iterative} & $6.19$  & $1.45\times10^{-12}$ \\
      & Proposed QSAI   & $1.70$  & $7.97\times10^{-15}$ \\
\midrule
$0.05$  & QSVD            & $6.89$  & $3.08\times10^{-14}$ \\
      & QNS \cite{leplat2025iterative} & $3.29$  & $2.91\times10^{-12}$ \\
      & Proposed QSAI   & $0.98$  & $1.23\times10^{-14}$ \\
\bottomrule
\end{tabular}
\caption{Performance comparison of pseudoinverse computation methods in quaternion filtering of the Lorenz attractor ($\tau=1$) over $[0,40]$ for various step sizes $\Delta t$.}
\label{tab:methods_comparison_dt}
\end{table}
Table~\ref{tab:methods_comparison_dt} summarizes the performance of the three pseudoinverse computation methods with varying step sizes. For each case, the CPU time and the relative reconstruction error $\varepsilon$ are reported. 
The proposed QSAI method achieves reconstruction errors comparable to QSVD and QNS while reducing computation time. This confirms that the QSAI approach is not only accurate, but also well suited for real-time quaternion signal filtering.

For visualization, Figure~\ref{fig:lorenz_filtering_all} illustrates the case $\Delta t = 0.01$. 
The first row shows the Lorenz attractor and the corresponding clean signal components $(u,v,w)$. 
The second row depicts the noisy, delayed input and the reconstruction obtained via QSVD, while the third row compares the iterative QNS and QSAI results. 
Both iterative methods successfully recover the chaotic trajectories, but QSAI achieves comparable accuracy with a markedly lower computational cost.
\begin{figure}[htbp!]
  \centering
  \includegraphics[width=0.48\textwidth]{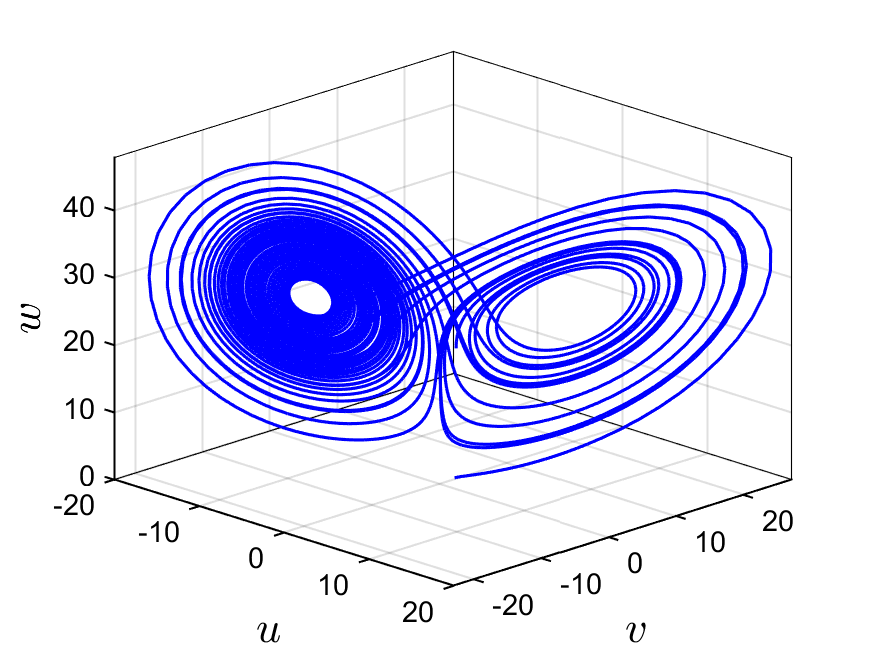}\hfill
  \includegraphics[width=0.48\textwidth]{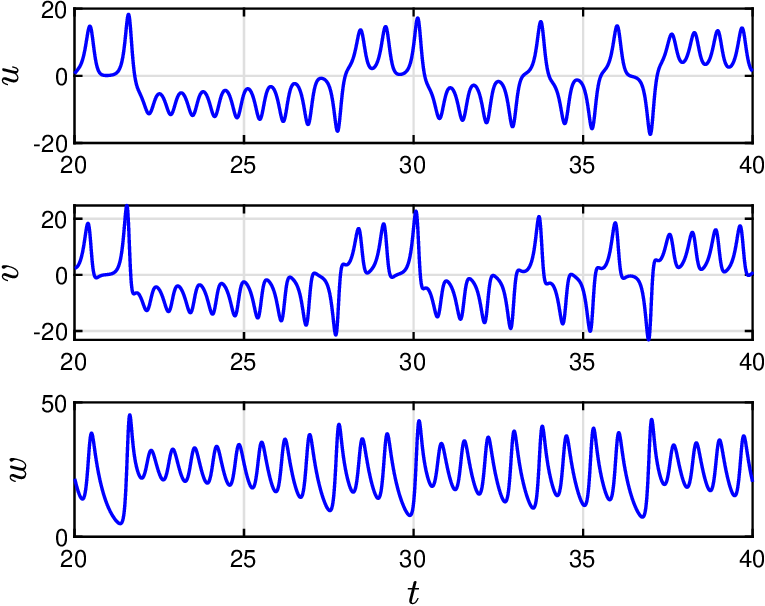}
  
 \vspace{4pt}
 
  \parbox{0.48\textwidth}{\centering  (a) Three-dimensional Lorenz attractor trajectory}\hfill
  \parbox{0.48\textwidth}{\centering  (b) Clean reference signal components $(u,v,w)$}
  
  \vspace{4pt}
  
  \includegraphics[width=0.48\textwidth]{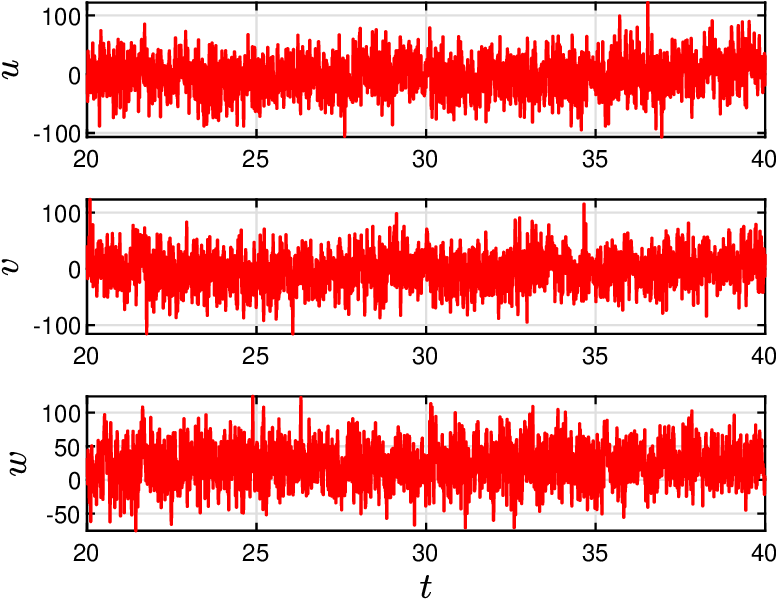}\hfill
  \includegraphics[width=0.48\textwidth]{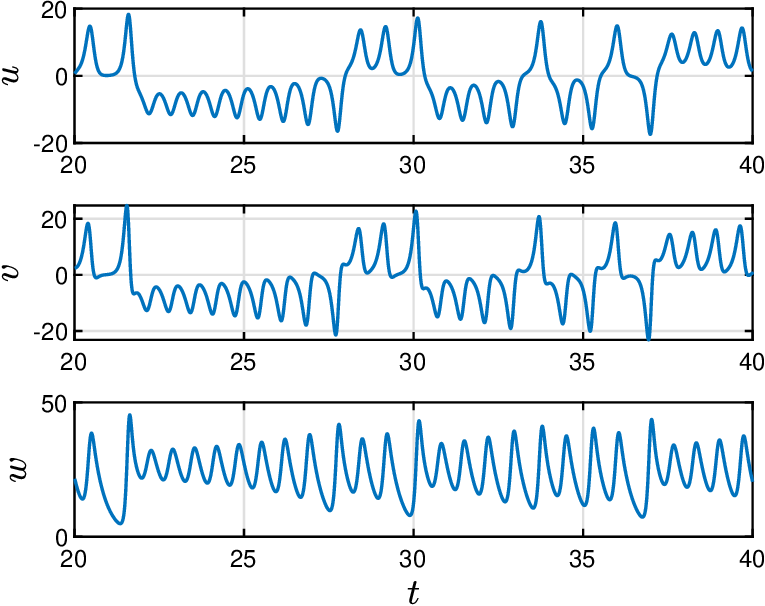}
  
  \vspace{4pt}
  
  \parbox{0.48\textwidth}{\centering  (c) Observed input: delayed and noisy signal components}\hfill
  \parbox{0.48\textwidth}{\centering  (d) Reconstructed signal components using QSVD}
  
  \vspace{4pt}
  
  \includegraphics[width=0.48\textwidth]{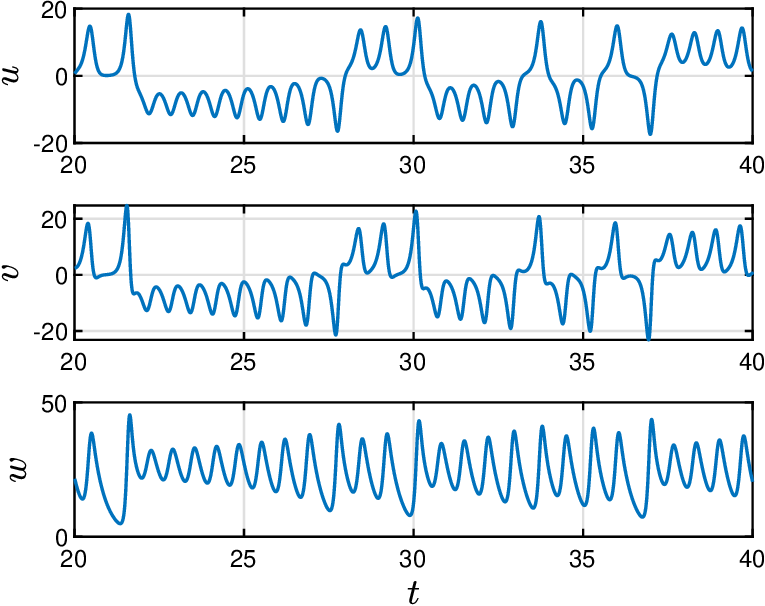}\hfill
  \includegraphics[width=0.48\textwidth]{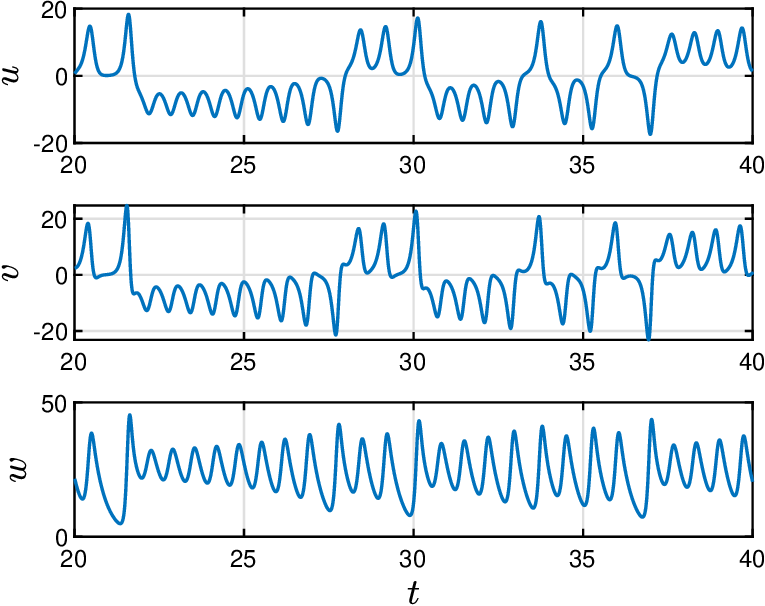}
  
  \vspace{4pt}
 
  \parbox{0.48\textwidth}{\centering  (e) Reconstructed signal components using QNS method}\hfill
  \parbox{0.48\textwidth}{\centering  (f) Reconstructed signal components using QSAI}
    \caption{Quaternion filtering of chaotic Lorenz signals: comparison of reference, noisy input, and reconstructed outputs using different pseudoinverse computation schemes.}
  \label{fig:lorenz_filtering_all}
\end{figure}

\section{Conclusions}\label{sec6}
This paper presented three quaternion-based iterative methods—QRAPID, QSAI, and QHPI$19$—for efficiently computing the Moore–Penrose inverse of quaternion matrices. Rigorous analyses of convergence and numerical stability were carried out, ensuring the reliability and robustness of the proposed schemes. Extensive numerical experiments demonstrated that these methods achieve accuracy comparable to QSVD and QNS approaches while requiring significantly less computational time, thereby offering practical advantages for large-scale quaternion inverse problems. In addition to standalone inversion, the QSAI method was successfully employed as a preconditioner for quaternion Krylov subspace solvers, substantially accelerating convergence in large and ill-conditioned quaternion linear systems. The effectiveness of the proposed framework was further validated through two representative applications: image inpainting and the filtering of chaotic signals, thereby confirming both accuracy and computational efficiency.

Future work will focus on extending these algorithms to broader algebraic settings, such as quaternion tensors, split and dual quaternions, and block-structured Moore–Penrose inverses. Moreover, GPU-based acceleration and mixed-precision implementations present promising avenues for enhancing scalability, as the proposed methods are matrix-free and primarily rely on matrix–matrix multiplications.

\section*{Funding}
\begin{itemize}
\item Ratikanta Behera is supported by the Anusandhan National Research Foundation (ANRF), Government of India, under Grant No. EEQ/2022/001065.
\end{itemize}

\section*{Conflict of Interest}
The authors would like to assure the readers that they have no potential conflicts of interest to report.
\section*{Data Availability}
In the context of this article, it is important to clarify that the data sets created or examined during the course of this study can be shared on request.

\section*{ORCID}
Biswarup Karmakar~\orcidB \href{https://orcid.org/0009-0003-5635-5425}{ \hspace{2mm}\textcolor{lightblue}{https://orcid.org/0009-0003-5635-5425}} \\
Neha Bhadala \orcidC \href{https://orcid.org/0009-0001-9249-0611}{ \hspace{2mm}\textcolor{lightblue}{https://orcid.org/0009-0001-9249-0611}} \\
Ratikanta Behera\orcidA \href{https://orcid.org/0000-0002-6237-5700}{ \hspace{2mm}\textcolor{lightblue}{ https://orcid.org/0000-0002-6237-5700}}\\

\bibliography{References}
\end{document}